\theoremstyle{plain}
\newtheorem{cor}{Corollary}
\newtheorem{lem}[cor]{Lemma}
\newtheorem{prop}[cor]{Proposition}
\newtheorem*{prop*}{Proposition}
\newtheorem{thm}[cor]{Theorem}
\newtheorem*{thm*}{Theorem}
\theoremstyle{definition}
\newtheorem{definition}[cor]{Definition}
\newtheorem{remark}[cor]{Remark}
\numberwithin{cor}{section}
\numberwithin{equation}{section}
\DeclareMathOperator{\C}{C}
\DeclareMathOperator{\dd}{d}
\DeclareMathOperator{\rank}{rank}
\DeclareMathOperator{\Hess}{Hess}  
\DeclareMathOperator{\Proj}{Proj}
\DeclareMathOperator{\codim}{codim}
\newcommand{\abs}[1]{\left|#1\right|}
\newcommand{\norm}[1]{\left\|#1\right\|}
\providecommand{\ud}[1]{\, \mathrm{d} #1}
\providecommand{\dx}{\ud{x}}
\providecommand{\ds}{\ud{s}}
\providecommand{\dt}{\ud{t}}
\providecommand{\dd}{\ud}
\def\XXint#1#2#3{{\setbox0=\hbox{$#1{#2#3}{\int}$ }
\vcenter{\hbox{$#2#3$ }}\kern-.6\wd0}}
\title{Convergence rates for the stochastic gradient descent \\ method  for non-convex objective functions}
\author{Benjamin Fehrman$^1$, Benjamin Gess$^2$, and Arnulf Jentzen$^3$
	\bigskip
	\\
	\small{$^1$Mathematical Institute, University of Oxford,}\\
	\small{Oxford, United Kingdom,}\\
	\small{ e-mail:  
	benjamin.fehrman@maths.ox.ac.uk}\\
	\smallskip
	\\
	\small{$^2$ Max Planck Institute for Mathematics in the Sciences,}\\
	\small{Leipzig, Germany,}\\
	\small{Fakult\"at f\"ur Mathematik, Universit\"at Bielefeld,}\\
	\small{Bielefeld, Germany,}\\
	\small{e-mail:  benjamin.gess@mis.mpg.de}\\
	\smallskip
	\\
	\small{$^3$Seminar for Applied Mathematics, Department of Mathematics,}\\
	\small{ETH Zurich, Zurich, Switzerland,}\\
	\small{e-mail:   arnulf.jentzen@sam.math.ethz.ch}
	}
\date{\today}
\begin{document}

\maketitle

 \begin{abstract}
     We prove the local convergence to minima and estimates on the rate of convergence for the stochastic gradient descent method in the case of not necessarily globally convex nor contracting objective functions. In particular, the results are applicable to simple objective functions arising in machine learning.
 \end{abstract}

 \tableofcontents
 

 \section{Introduction}\label{introduction}
 
 Stochastic gradient descent algorithms (SGD), going back to \cite{RM85}, are the most common way to train neural networks. Despite their relevance to machine learning and much recent interest, estimates on their rate of convergence have so far only been shown under global contraction or convexity assumptions on the objective function that are often not satisfied by examples arising in machine learning. Indeed, citing from \cite{VBRS17}, ``While SGD has been rigorously analyzed only for convex loss functions [...], in deep learning the loss
 is a non-convex function of the network parameters, hence there are no guarantees that SGD finds the global minimizer.'' In the present work, we prove the \textit{local} convergence of SGD to the set of global minima of the objective function while avoiding such a global convexity or contractivity assumption. The relevance of the obtained results is demonstrated by the application to the training of (simple) neural networks. 
 
 Stochastic gradient descent methods are used to numerically minimize functions $f\colon  \mathbb{R}^{d}\to\mathbb{R}$ of the form 
 \begin{equation}\label{eq:intro-f}
  f(\theta)=\mathbb{E}\left[F(\theta,X)\right],
 \end{equation}
 for some product measurable function $F\colon  \mathbb{R}^{d}\times\mathbb{R}^{m}\rightarrow\mathbb{R}$ and some random variable $X\colon\Omega\rightarrow\mathbb{R}^m$ on some probability space $(\Omega,\mathcal{F},\mathbb{P})$. The analysis of SGD has attracted considerable attention in the literature (cf., e.g., \cite{B14,BM13,BCN18,DMG15,JKNW18,MB11,TM15} and the references therein). In \cite{DMG15,JKNW18}, the convergence of SGD with rates assuming the following contraction property for the objective function $f$, which is classical in stochastic approximation theory, was analyzed: There is an $L>0$ and a zero $\theta^*$ of $\nabla_\theta f$ such that
for every $\theta\in\mathbb{R}^d$ it holds that \begin{equation}\label{eq:intro-contraction}
   (-\nabla_\theta f(\theta),\theta-\theta^*) \le -L \|\theta-\theta^*\|^2.
 \end{equation}
In particular, this contraction property implies the uniqueness of the zero $\theta^*$ of $\nabla_\theta f$ and thus the uniqueness of local minima of $f$. This is in stark contrast to actual objective functions arising in the training of neural networks which are expected to show rich sets of local minima and saddle points/plateaus. Consequently, it is vital for the application to machine learning to avoid such global contraction assumptions. In addition, for example due to the positive homogeneity of the ReLU function, the objective functions typically satisfy certain symmetries, implying that global (and local) minima are not isolated points nor unique, but form (possibly non-compact) manifolds. Indeed, this is demonstrated for simple neural networks in Section \ref{sec_applications} below. We are therefore led to the task of analyzing the convergence properties of SGD locally at sets of minima\footnote{We emphasize that this is disjoint from the recent works \cite{BCN18,LO18,WWB18} where the global convergence of the \textit{gradient} of the objective function to zero has been shown for SGD and AdaGrad. This does not imply the local convergence to minima, since the gradient also vanishes in saddles/plateaus.}. In the present work we provide estimates on the rate of convergence for SGD under assumptions avoiding a contraction property like \eqref{eq:intro-contraction}.

 \begin{thm}\label{intro_ng_one_path}  Let $d\in\mathbb{N}$, $\mathfrak{d}\in\{ 0, 1, \ldots, d - 1 \}$, $\rho\in(\nicefrac{2}{3},1)$, let $\abs{\cdot}\colon\mathbb{R}^d\rightarrow\mathbb{R}$ be the standard norm on $\mathbb{R}^d$, let $U\subseteq\mathbb{R}^d$ be an open set, let $A\subseteq\mathbb{R}^d$ be a bounded open set, let $(\Omega,\mathcal{F},\mathbb{P})$ be a probability space, let $(S,\mathcal{S})$ be a measurable space, let $F=(F(\theta,x))_{(\theta,x)\in\mathbb{R}^d\times S}\colon  \mathbb{R}^d\times S\rightarrow\mathbb{R}$ be a measurable function, let $X_{k,n,m}\colon  \Omega\rightarrow S$, $k,n,m\in\mathbb{N}$, be i.i.d.\ random variables which satisfy for every $\theta\in\mathbb{R}^d$ that $\mathbb{E}\big[ |F(\theta,X_{1,1,1})|^2\big]<\infty$, let $f\colon\mathbb{R}^d\rightarrow\mathbb{R}$ be the function which satisfies for every $\theta\in\mathbb{R}^d$ that $f(\theta)=\mathbb{E}\big[F(\theta,X_{1,1,1})\big]$, let $\mathcal{M}\subseteq\mathbb{R}^d$ satisfy that
\begin{equation}\mathcal{M}=\big\{\theta\in\mathbb{R}^d\colon [f(\theta)=\inf\nolimits_{\vartheta\in\mathbb{R}^d} f(\vartheta)]\big\},\end{equation}
assume for every $x\in S$ that $\mathbb{R}^d\ni\theta\mapsto F(\theta,x)\in\mathbb{R}$ is a continuously differentiable function, assume that $f|_U\colon U\rightarrow\mathbb{R}$ is a three times continuously differentiable function, assume for every non-empty compact set $\mathfrak{C}\subseteq U$ that $\sup\nolimits_{\theta\in \mathfrak{C}}\mathbb{E}\big[|F(\theta,X_{1,1,1})|^2+|(\nabla_\theta F)(\theta,X_{1,1,1})|^2\big]<\infty$, assume that $\mathcal{M}\cap U$ is a $\mathfrak{d}$-dimensional $\C^1$-submanifold of $\mathbb{R}^d$, assume that $\mathcal{M}\cap U\cap A\neq\emptyset$, assume for every $\theta\in(\mathcal{M}\cap U)$ that $\rank((\Hess f)(\theta))=d-\mathfrak{d}$, for every $n,M\in\mathbb{N}$, $r\in(0,\infty)$ let $\Theta^{k,M,r}_n\colon\Omega\rightarrow\mathbb{R}^d$, $k\in\mathbb{N}$, be i.i.d.\ random variables, assume for every $M\in\mathbb{N}$, $r\in(0,\infty)$ that $\Theta^{1,M,r}_{0}$ is continuous uniformly distributed on $A$, assume for every $M\in\mathbb{N}$, $r\in(0,\infty)$ that $(\Theta^{k,M,r}_0)_{k\in\mathbb{N}}$ and $(X_{k,n,m})_{k,n,m\in\mathbb{N}}$ are independent, assume for every $k,n,M\in\mathbb{N}$, $r\in(0,\infty)$ that
\begin{equation}\label{intro_stoch_mean_10}\Theta^{k,M,r}_{n}=\Theta^{k,M,r}_{n-1}-\frac{r}{n^\rho M}\!\left[\sum_{m=1}^M(\nabla_\theta F)(\Theta^{k,M,r}_{n-1},X_{k,n,m})\right],\end{equation}
and for every $n,M,\mathfrak{M},K\in\mathbb{N}$, $r\in(0,\infty)$ let $\varTheta^{K,M,\mathfrak{M},r}_n\colon\Omega\rightarrow\mathbb{R}^d$ be a random variable which satisfies that
\begin{equation}\sum_{m=1}^\mathfrak{M} F(\varTheta^{K,M,\mathfrak{M},r}_n, X_{1,n+1,m})=\min_{k\in\{1,2\ldots,K\}}\left[\sum_{m=1}^\mathfrak{M}F(\Theta^{k,M,r}_n, X_{1,n+1,m})\right],\end{equation}
(cf.\ Lemma~\ref{lem_measurable_selection} below).  Then there exist $\mathfrak{r},c\in(0,\infty)$, $\kappa\in[0,1)$ such that for every $n,M,\mathfrak{M},K\in\mathbb{N}$, $r\in(0,\mathfrak{r}]$, $\varepsilon\in(0,1]$ it holds that
\begin{equation}\label{intro_ref} \mathbb{P}\Big( \Big[f(\varTheta^{K,M,\mathfrak{M},r}_n)-\inf\nolimits_{\theta\in\mathbb{R}^d}f(\theta)\Big]\geq\varepsilon\Big) \leq \frac{cK}{\varepsilon^2\mathfrak{M}}+\left[\kappa+c\left(\frac{1}{\varepsilon^{2}n^{\rho}}+\frac{n^{1-\rho}}{M^{\nicefrac{1}{2}}}\right)\right]^{K}.\end{equation}
\end{thm}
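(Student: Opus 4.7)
The plan is to split the right-hand side of \eqref{intro_ref} into two qualitatively different contributions: the $[\kappa+c(\varepsilon^{-2}n^{-\rho}+n^{1-\rho}M^{-\nicefrac{1}{2}})]^{K}$ term, which arises as the $K$th power of a single-trajectory failure probability and encodes the event that \emph{all} $K$ i.i.d.\ SGD trajectories $\Theta^{1,M,r}_{\cdot},\ldots,\Theta^{K,M,r}_{\cdot}$ fail to push $f$ below $\inf f+\varepsilon$, and the $cK/(\varepsilon^{2}\mathfrak{M})$ term, which is the error incurred because the best trajectory is selected using only the $\mathfrak{M}$ fresh samples $X_{n+1,\cdot}$ rather than the true expectation $f$.

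First I would set up the local geometry near $\mathcal{M}$. Since $f|_U$ is $\C^{3}$, $\mathcal{M}\cap U$ is a $\mathfrak{d}$-dimensional $\C^{1}$-submanifold, and $\rank((\Hess f)(\theta))=d-\mathfrak{d}$ for every $\theta\in\mathcal{M}\cap U$, a Morse--Bott-type normal form produces a tubular neighborhood $V\subseteq U\cap A$ of a compact piece $\mathcal{M}_{0}\subseteq\mathcal{M}\cap U\cap A$ together with a nearest-point projection $\pi\colon V\to\mathcal{M}_{0}$ and constants $\lambda,L\in(0,\infty)$ such that, for every $\theta\in V$,
\begin{equation}
\lambda\|\theta-\pi(\theta)\|^{2}\leq f(\theta)-\inf\nolimits_{\vartheta\in\mathbb{R}^{d}}f(\vartheta)\leq L\|\theta-\pi(\theta)\|^{2}
\end{equation}
and moreover $(\nabla f(\theta),\theta-\pi(\theta))\geq\lambda\|\theta-\pi(\theta)\|^{2}$. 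By shrinking $V$, the uniform law of $\Theta^{1,M,r}_{0}$ on the bounded open set $A$ together with $\mathcal{M}\cap U\cap A\neq\emptyset$ yields a constant $\kappa:=\mathbb{P}(\Theta^{1,M,r}_{0}\notin V)\in[0,1)$.

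Conditionally on $\{\Theta^{1,M,r}_{0}\in V\}$ I would carry out the stochastic approximation analysis in tubular coordinates. Writing $\xi_{n}:=\frac{1}{M}\sum_{m=1}^{M}[(\nabla_{\theta}F)(\Theta^{1,M,r}_{n-1},X_{n,m})-\nabla f(\Theta^{1,M,r}_{n-1})]$ for the mean-zero mini-batch noise, the uniform-in-compact second-moment hypothesis on $\nabla_{\theta}F$ gives conditional variance of $\xi_{n}$ of order $1/M$ whenever the iterate stays in a compact subset of $V$. A one-step Taylor expansion of $\|\Theta^{1,M,r}_{n}-\pi(\Theta^{1,M,r}_{n})\|^{2}$, using the $\C^{3}$ regularity of $f|_U$ and the quasi-contraction above, then yields up to the first exit time from $V$ a Robbins--Monro-type recursion with drift coefficient $1-2\lambda r/n^{\rho}$ and noise of order $r^{2}/(n^{2\rho}M)$. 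Iterating this recursion, summing step sizes via $\sum_{k=1}^{n}k^{-\rho}\asymp n^{1-\rho}$, choosing $\mathfrak{r}$ small enough that contraction dominates for $r\in(0,\mathfrak{r}]$, and combining the quadratic comparison between $f-\inf f$ and $\|\cdot-\pi(\cdot)\|^{2}$ with a Chebyshev-type bound produces the single-trajectory estimate $\mathbb{P}(f(\Theta^{1,M,r}_{n})-\inf_{\vartheta}f(\vartheta)\geq\varepsilon)\leq\kappa+c(\varepsilon^{-2}n^{-\rho}+n^{1-\rho}M^{-\nicefrac{1}{2}})$, in which the $1/(\varepsilon^{2}n^{\rho})$ part reflects the stationary variance of the SGD iterate and $n^{1-\rho}/\sqrt{M}$ quantifies the probability that the trajectory ever leaves $V$ due to accumulated mini-batch noise. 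The assumed independence of the $K$ trajectories then lifts this bound to its $K$th power.

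Finally, to pass from ``at least one of the $K$ trajectories is $\varepsilon$-good'' to ``the selected trajectory $\varTheta^{K,M,\mathfrak{M},r}_{n}$ is $\varepsilon$-good'', I use that $\varTheta^{K,M,\mathfrak{M},r}_{n}$ minimizes $\hat f_{k}:=\frac{1}{\mathfrak{M}}\sum_{m=1}^{\mathfrak{M}}F(\Theta^{k,M,r}_{n},X_{n+1,m})$ over $k\in\{1,\ldots,K\}$. Since the fresh samples $X_{n+1,\cdot}$ are independent of the trajectories and have finite second $F$-moments, Chebyshev gives $\mathbb{P}(|\hat f_{k}-f(\Theta^{k,M,r}_{n})|\geq\varepsilon/3)\leq c/(\mathfrak{M}\varepsilon^{2})$ for each $k$; a union bound over the $K$ trajectories together with a standard $\varepsilon/3$-comparison between the winner of the empirical minimization and the winner of the true minimization supplies the $cK/(\varepsilon^{2}\mathfrak{M})$ term. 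The main obstacle is the stochastic approximation step, because $\mathcal{M}$ is a positive-dimensional manifold rather than a single point: the reference $\pi(\Theta^{1,M,r}_{n-1})$ itself moves along the iteration, so the one-point Lyapunov argument has to be carried out in the normal bundle with the tangential drift controlled separately. This is where the $\C^{3}$-regularity of $f|_U$ is consumed, via a second-order Taylor expansion whose remainder is cubic in $\|\Theta^{1,M,r}_{n-1}-\pi(\Theta^{1,M,r}_{n-1})\|$, and where the constraint $\rho\in(\nicefrac{2}{3},1)$ is essential, forcing the higher-order remainder terms $O(\sum k^{-2\rho})$ to be of lower order than the leading $n^{1-\rho}/\sqrt{M}$ contribution.
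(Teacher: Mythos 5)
Your proposal follows essentially the same route as the paper: a tubular-neighborhood/normal-bundle setup with a nearest-point projection and a Morse--Bott-type nondegeneracy of the Hessian (the paper's Lemmas~\ref{lem_main} and~\ref{lem_projection}), a Robbins--Monro recursion for the squared normal distance up to the exit time from the basin (Proposition~\ref{ng_converge}), a separate control of the tangential excursion to bound the exit probability (Propositions~\ref{ng_tang} and~\ref{ng_good_set}), the $K$th power by independence, and a Chebyshev-plus-union-bound argument for the $\mathfrak{M}$-sample selection step (Theorem~\ref{thm_intro_ng_one_path}). One small correction: the critical remainder sum that forces $\rho\in(\nicefrac{2}{3},1)$ is $\sum_k k^{-\nicefrac{3\rho}{2}}$ (step size $k^{-\rho}$ times the expected normal distance $k^{-\nicefrac{\rho}{2}}$ in the excursion estimate), not $\sum_k k^{-2\rho}$ as you write, which would only require $\rho>\nicefrac{1}{2}$.
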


Theorem~\ref{intro_ng_one_path} is an immediate consequence of Theorem~\ref{thm_intro_ng_one_path} in Section~\ref{sec_nonglobal} below.  The statement of Theorem~\ref{intro_ng_one_path} should be interpreted in the following way.  We aim to minimize an objective function $f\colon\mathbb{R}^d\rightarrow\mathbb{R}$, where we assume that the set of minima
\begin{equation}\mathcal{M}=\{\theta\in\mathbb{R}^d\colon f(\theta)=\big[\inf\nolimits_{\vartheta\in\mathbb{R}^d}f(\vartheta)\big]\},\end{equation}
is somewhere locally smooth in the sense that there exists an open set $U\subseteq\mathbb{R}^d$ such that
\begin{equation} \mathcal{M}\cap U\;\textrm{is a non-empty $\mathfrak{d}$-dimensional $\C^1$-submanifold of $\mathbb{R}^d$.}\end{equation}
We furthermore assume that $f$ is locally $\C^3$ in a neighborhood of $\mathcal{M}\cap U$ and that the Hessian is maximally nondegenerate on $\mathcal{M}\cap U$ in the sense that for every $\theta\in(\mathcal{M}\cap U)$ it holds that
\begin{equation}\rank\big(\big(\Hess f\big)(\theta)\big)=d-\mathfrak{d}=\codim(\mathcal{M}\cap U).\end{equation}

Let $(\Omega,\mathcal{F},\mathbb{P})$ be a probability space, let $(S,\mathcal{S})$ be a measurable space, and let $X_{k,n,m}\colon\Omega\rightarrow S$, $k,n,m\in\mathbb{N}$, be i.i.d.\ random variables.  We assume that there exists a measurable function $F\colon S\times\mathbb{R}^d\rightarrow\mathbb{R}$ which satisfies for every $\theta\in\mathbb{R}^d$ that
\begin{equation} f(\theta)=\mathbb{E}\big[F(\theta, X_{1,1,1})\big].\end{equation}
In particular, since it is oftentimes the case in practice that the deterministic gradient $\nabla f(\theta)$ cannot be computed or cannot be efficiently computed, the random gradient $\nabla_\theta F(\theta,X_{1,1,1})$ provides an efficiently computable stochastic approximation.
 
The initial data of SGD is sampled from a bounded open set $A\subseteq\mathbb{R}^d$ which satisfies that $\mathcal{M}\cap U\cap A\neq\emptyset$.  That is, for every mini-batch size $M\in\mathbb{N}$ and $r\in(0,\infty)$, the initial data $\Theta^{k,M,r}_0\colon\Omega\rightarrow\mathbb{R}^d$, $k\in\mathbb{N}$, are uniformly distributed on $A$, independent, and independent of the driving noise $X_{k,n,m}$, $k,n,m\in\mathbb{N}$.  We then compute independent solutions to SGD in the sense that for every $k,n\in\mathbb{N}$ it holds that
\begin{equation}\Theta^{k,M,r}_{n}=\Theta^{k,M,r}_{n-1}-\frac{r}{n^\rho M}\!\left[\sum_{m=1}^M(\nabla_\theta F)(\Theta^{k,M,r}_{n-1},X_{k,n,m})\right].\end{equation}
For a fixed terminal time $n\in\mathbb{N}$, for a sampling size $K\in\mathbb{N}$, the output of the algorithm at this point is the collection of values $\Theta^{k,M,r}_n$, $k\in\{1,2,\ldots,K\}$.  It remains to identify the value $\Theta^{k,M,r}_n$, $k\in\{1,2,\ldots,K\}$, that minimizes the objective function.

Much as in the case of the gradient, since the objective function cannot be practically computed, for a terminal time $n\in\mathbb{N}$, for a mini-batch size $\mathfrak{M}\in\mathbb{N}$, we introduce the mini-batch approximation $F^{K,\mathfrak{M},n}\colon\mathbb{R}^d\times\Omega\rightarrow\mathbb{R}$ which satisfies for every $(\theta,\omega)\in\mathbb{R}^d\times\Omega$ that
\begin{equation}F^{K,\mathfrak{M},n}(\theta,\omega)=\frac{1}{\mathfrak{M}}\sum_{m=1}^\mathfrak{M}F(\theta, X_{1,n+1,m}(\omega)).\end{equation}
We then identify the value $\Theta^{k,M,r}_n$, $k\in\{1,\ldots,K\}$, that minimizes $F^{K,\mathfrak{M},n}$ in the sense that we compute a random variable $\varTheta^{K,M,\mathfrak{M},r}_n\colon\Omega\rightarrow\mathbb{R}^d$ which satisfies that
\begin{equation}\sum_{m=1}^\mathfrak{M}F(\varTheta^{K,M,\mathfrak{M},r}_n,X_{1,n+1,m})=\min_{k\in\{1,2,\ldots,K\}}\left[\sum_{m=1}^\mathfrak{M}F(\Theta^{k,M,r}_n, X_{1,n+1,m})\right].\end{equation}

The conclusion of Theorem~\ref{intro_ng_one_path} estimates the probability that $\varTheta^{K,M,\mathfrak{M},r}_n$ is an $\varepsilon\in(0,1]$ minimizer of the objective function.  Precisely, there exist $\mathfrak{r},c\in(0,\infty)$, $\kappa\in[0,1)$ such that for every $n,M,\mathfrak{M},K\in\mathbb{N}$, $r\in(0,\mathfrak{r}]$, $\varepsilon\in(0,1]$ it holds that
\begin{equation}\label{explanation}\mathbb{P}\Big( \Big[f(\varTheta^{K,M,\mathfrak{M},r}_n)-\inf\nolimits_{\theta\in\mathbb{R}^d}f(\theta)\Big]\geq\varepsilon\Big) \leq \frac{cK}{\varepsilon^2\mathfrak{M}}+\left[\kappa+c\left(\frac{1}{\varepsilon^{2}n^{\rho}}+\frac{n^{1-\rho}}{M^{\nicefrac{1}{2}}}\right)\right]^{K}.\end{equation}
The limit $\mathfrak{M}\rightarrow\infty$ corresponds to computing the minimizer of $f$ exactly.  If this can be done efficiently, then the first term on the righthand side of \eqref{explanation} vanishes.

The constant $\kappa\in[0,1)$, which we compute precisely in Theorem~\ref{thm_intro_ng_one_path} below, quantifies two sources of error:  the probability that the initial condition lies outside of a basin of attraction and a portion of the probability that SGD beginning in a basin of attraction fails to converge.  In Remark~\ref{rho_sum} below and Section~\ref{stoch_discrete}, we prove that the restriction $\rho\in(\nicefrac{2}{3},1)$ can be extended to $\rho\in(0,1)$ under the additional assumption that $\mathcal{M}\cap U$ is a compact subset of $\mathbb{R}^d$.  Finally, it is not necessary to assume that $F$ is continuously differentiable, and this assumption can be replaced with the assumption that for every $x\in S$ we have that $F(\cdot,x)$ is a locally Lipschitz continuous function of $\theta\in\mathbb{R}^d$.

We observe that the computational efficiency of the algorithm can be estimated using Theorem~\ref{intro_ng_one_path}.  In particular, it follows from Corollary~\ref{cor_computation} below that there exist constants $c_i\in(0,\infty)$, $i\in\{1,2,3,4\}$, such that for every $\varepsilon,\eta\in(0,1]$, for $n(\varepsilon)\in\mathbb{N}_0, M(\varepsilon),\mathfrak{M}(\varepsilon,\eta), K(\eta)\in\mathbb{N}$ which satisfy that
\begin{equation}\label{intro_approx_cost_00}n(\varepsilon)=c_1\varepsilon^{-\nicefrac{2}{\rho}},\;\; M(\varepsilon)=c_2\varepsilon^{-\nicefrac{4}{\rho}+4},\;\;\mathfrak{M}(\varepsilon,\eta)=c_3\varepsilon^{-2}\eta^{-1}\abs{\log(\eta)},\;\;\textrm{and}\;\;K(\eta)=c_4\abs{\log(\eta)},\end{equation}
it holds that
\begin{equation}\label{intro_approx_cost_1} \mathbb{P}\Big(\big[f(\varTheta^{K(\eta),M(\varepsilon),\mathfrak{M}(\varepsilon,\eta),r}_{n(\varepsilon)})-\inf_{\theta\in\mathbb{R}^d}f(\theta)\big]\geq\varepsilon\Big) \leq\eta.\end{equation}
For every bounded open set $A\subseteq\mathbb{R}^d$ which satisfies that $\mathcal{M}\cap U\cap A$ is non-empty, for every $\varepsilon,\eta\in(0,1]$, the computational efficiency of the algorithm $\textrm{Eff}(\varepsilon,\eta;A)\in \mathbb{N}$ satisfies that
\begin{equation}\textrm{Eff}(\varepsilon,\eta;A)=\#\;\textrm{computations sufficient to ensure \eqref{intro_approx_cost_1}}.\end{equation}
It follows from \eqref{intro_approx_cost_00} that there exists $c\in(0,\infty)$ which satisfies for every $\varepsilon,\eta\in(0,1]$ that
\begin{equation}\textrm{Eff}(\varepsilon,\eta;A)\leq c\big(\varepsilon^{-2}\eta^{-1}\abs{\log(\eta)}+\varepsilon^{-\nicefrac{6}{\rho}+4}\abs{\log(\eta)}\big),\end{equation}
where the constant $c\in(0,\infty)$ depends on the computational cost of computing $F$ and $\nabla_\theta F$ but not on the running time $n\in\mathbb{N}$, mini-batch size $M\in\mathbb{N}$, or sampling size $K\in\mathbb{N}$.  Furthermore, we prove in Corollary~\ref{cor_computation_comp} below that that computational efficiency can be improved in the case that the local manifold of minima is compact.

The estimate of Theorem~\ref{intro_ng_one_path} quantifies two sources of error.  The first term on the righthand side of \eqref{intro_ref} quantifies the error introduced by the mini-batch approximation of the objective function.  In the case that the objective function $f$ can be efficiently computed, this error can be avoided by computing $\varTheta^{K,M,\infty,r}_n\colon\Omega\rightarrow\mathbb{R}^d$ which satisfies that
\begin{equation}f(\varTheta^{K,M,\infty,r}_n)=\Big[\min_{k\in\{1,2,\ldots,K\}}f(\Theta^{k,M,r}_n)\Big],\end{equation}
for which it follows from Corollary~\ref{obj_ng_one_path} below that
\begin{equation}\mathbb{P}\Big( \big[f(\varTheta^{K,M,\infty,r}_n)-\inf_{\theta\in\mathbb{R}^d}f(\theta)\big]\geq\varepsilon\Big) \leq \left[\kappa+c\left(\frac{1}{\varepsilon^{2}n^{\rho}}+\frac{n^{1-\rho}}{M^{\nicefrac{1}{2}}}\right)\right]^{K}.\end{equation}

The second term on the rigththand side of \eqref{intro_ref} quantifies the failure of the solutions $\Theta^{k,M,r}_n$, $k\in\{1,2,\ldots,K\}$, to converge to within distance $\varepsilon\in(0,1]$ to the local manifold of minima at time $n\in\mathbb{N}$.  We quantify this error in Corollary~\ref{cor_ng_one_path} below, where we prove that
\begin{equation}\label{intro_comp_cost_000} \mathbb{P}\Big( \big[\min_{k\in\{1,2,\ldots,K\}}\inf_{\theta\in(\mathcal{M}\cap U)}\big|\Theta^{k,M,r}_{n}-\theta\big|\big]\geq\varepsilon\Big) \leq \left[\kappa+c\left(\frac{1}{\varepsilon^{2}n^{\rho}}+\frac{n^{1-\rho}}{M^{\nicefrac{1}{2}}}\right)\right]^{K}.\end{equation}
The methods of Corollary~\ref{cor_computation} below prove that there exist constants $c_i\in(0,\infty)$, $i\in\{1,2,3\}$, such that for every $\varepsilon,\eta\in(0,1]$, for $n(\varepsilon)\in\mathbb{N}, M(\varepsilon),K(\eta)\in\mathbb{N}$ which satisfy that
\begin{equation}\label{intro_comp_cost_00}n(\varepsilon)=c_1\varepsilon^{-\nicefrac{2}{\rho}},\;\; M(\varepsilon)=c_2\varepsilon^{-\nicefrac{4}{\rho}+4},\;\;\textrm{and}\;\;K(\eta)=c_3\abs{\log(\eta)},\end{equation}
it holds that
\begin{equation}\label{intro_comp_cost}\mathbb{P}\Big( \big[\min_{k\in\{1,2,\ldots,K(\eta)\}}\inf_{\vartheta\in(\mathcal{M}\cap U)}\big|\varTheta^{k,M(\varepsilon),r}_{n(\varepsilon)}-\vartheta\big|\big]\geq\varepsilon\Big) \leq \eta.\end{equation}

For every bounded open set $A\subseteq\mathbb{R}^d$ with $\mathcal{M}\cap U\cap A\neq\emptyset$, for every $\varepsilon,\eta\in(0,1]$, the computational efficiency $\textrm{Eff}_{\textrm{SGD}}(\varepsilon,\eta;A)\in \mathbb{N}$ of \eqref{intro_comp_cost_000} satisfies that
\begin{equation}\textrm{Eff}_{\textrm{SGD}}(\varepsilon,\eta;A)=\#\;\textrm{computations sufficient to ensure \eqref{intro_comp_cost}}.\end{equation}
It follows from \eqref{intro_comp_cost_00} that for every bounded open set $A\subseteq \mathbb{R}^d$ with $\mathcal{M}\cap U\cap A\neq\emptyset$ there exists $c\in(0,\infty)$ such that for every $\varepsilon,\eta\in(0,1]$ it holds that
\begin{equation}\label{intro_comp_cost_0}\textrm{Eff}_{\textrm{SGD}}(\varepsilon,\eta;A)\leq c\big(\varepsilon^{-\nicefrac{6}{\rho}+4}\abs{\log(\eta)}\big).\end{equation}

In particular, the computational efficiency $\textrm{Eff}_{\textrm{SGD}}$ yields a significant improvement when compared with a random sampling algorithm.  Precisely, suppose that $A\subseteq\mathbb{R}^d$ is a bounded open subset with $\mathcal{M}\cap U\cap A\neq\emptyset$.  Then, since $\mathcal{M}\cap U$ is a $\mathfrak{d}$-dimensional, $\C^1$-submanifold of $\mathbb{R}^d$, for the Lebesgue-Borel measure $\lambda\colon\mathcal{B}(\mathbb{R}^d)\rightarrow[0,\infty]$, there exists $c\in(0,\infty)$ which satisfies that
\begin{equation}\label{intro_comp_cost_1}\frac{\lambda\big(\{\theta \in A\colon\inf_{\vartheta\in(\mathcal{M}\cap U)}\big|x-\vartheta\big|\geq \varepsilon\})}{\lambda(A)}\geq 1-\frac{c\varepsilon^{d-\mathfrak{d}}}{\lambda(A)}.\end{equation}
If $\Theta^i\colon\Omega\rightarrow A$, $i\in\mathbb{N}$, are i.i.d.\ random variables that are continuous uniformly distributed on $A$, it follows from \eqref{intro_comp_cost_1} that for every $K\in\mathbb{N}$ it holds that
\begin{equation}\label{intro_comp_cost_2}\mathbb{P}\Big(\min_{i\in\{1,2,\ldots,K\}}\inf_{\theta\in(\mathcal{M}\cap U)}\big|\Theta^i-\theta\big|\geq\varepsilon\Big)\geq \Big(1-\frac{c\varepsilon^{d-\mathfrak{d}}}{\lambda(A)}\Big)^K.\end{equation}
For every $\varepsilon,\eta\in(0,1]$, $K\in\mathbb{N}$, in order to ensure that
\begin{equation}\mathbb{P}\Big(\min_{i\in\{1,2,\ldots,K\}}\inf_{\theta\in(\mathcal{M}\cap U)}\big|\Theta^i-\theta\big|\geq\varepsilon\Big)\leq \eta,\end{equation}
it is necessary to choose $K(\varepsilon,\eta)\in\mathbb{N}$ which satisfies that
\begin{equation}K(\varepsilon,\eta)\geq \log\Big(1-\frac{c\varepsilon^{d-\mathfrak{d}}}{\lambda(A)}\Big)^{-1}\abs{\log(\eta)}.\end{equation}
In particular, there exists $c\in(0,\infty)$ which satisfies for every $\varepsilon\in(0,(\nicefrac{\lambda(A)}{2 r})^{\nicefrac{1}{d-\mathfrak{d}}}]$ that
\begin{equation}K(\varepsilon,\eta)\geq c\varepsilon^{-(d-\mathfrak{d})}\abs{\log(\eta)}.\end{equation}
The computational efficiency of the random sampling algorithm is therefore worse than $\textrm{Eff}_{\textrm{SGD}}$ whenever the codimension $d-\mathfrak{d}$ is greater than $\nicefrac{6}{\rho}-4$.  This condition is expected to be satisfied in all practical machine learning applications, where the dimension $d\in\mathbb{N}$ is large, since for $\rho\in(\nicefrac{2}{3},1)$ we have $\nicefrac{6}{\rho}-4<5$.  In particular, this condition is satisfied for any $\rho\in(\nicefrac{2}{3},1)$ if there exists a unique minimum and $d\geq 5$.

   In a non-globally stable setting, i.e.\ when \eqref{eq:intro-contraction} is not satisfied, several obstacles in the proof of local convergence to minima and the estimation of the rate for SGD appear. In particular, even pretending a local minimum to be isolated and such that \eqref{eq:intro-contraction} holds in a neighborhood $V$ of the minimum, the global analysis put forward in \cite{JKNW18} is not immediately localizable, since deterministic bounded sets are not invariant under the dynamics of SGD. On the contrary, with probability one each realization of SGD will eventually leave the basin of attraction $V$, outside of which no control on the dynamics can be expected. Therefore, it becomes necessary to provide estimates on the probability that SGD leaves favorable neighborhoods.  Second, as pointed out above, (local) minima are not expected to appear in an isolated manner, but as (local) manifolds. This needs to be accounted for in the mathematical analysis, giving rise to a quantitative analysis inspired by the center manifold theorem, which in turn relies on estimates on the probability of SGD leaving favorable neighborhoods in normal and tangential direction separately.  In order to derive estimates on the rate of convergence, these steps are performed in a quantitative way in the proofs of this work. An intriguing observation is that the mathematical analysis of the rate of convergence relies on the use of mini-batches in order to control the loss of iterates in non-attracted regions.

In Sections \ref{section_det_cts} and \ref{section_det_dis} we provide an analysis of the deterministic gradient descent algorithm in continuous and discrete time in order to highlight the relevance of the assumptions in simplified settings. We emphasize again that, while the deterministic algorithms converge quickly, the computational costs of computing $\nabla f$ typically make the implementation of such algorithms infeasible. This is particularly the case when $f$ takes the form \eqref{intro_aan} below for a measure $\mu$ that is the empirical measure of a large training set.  An advantage of the stochastic algorithm is that, provided $M\in\mathbb{N}$ is not too large, the mini-batch gradient can be computed efficiently in the case of \eqref{intro_aan_1} below.  The disadvantage is that, inside an attracting set, the algebraic convergence of SGD in expectation is much slower than the exponential convergence of its deterministic counterpart.

\subsection{Literature}

The stochastic gradient descent algorithm has attained considerable interest in the literature, and a complete account on the existing results would go beyond the scope of this article. We will therefore restrict to works that seem most relevant to the current results and refer to the following works and the references therein for further details: See, for example, \cite{B14,BachMoulines11,BM13,Bottou12,BottouBousquet11,BottouLeCun04,DarkenChangMoody92,DieuleveutDurmusBach17,InoueParkOkada03,LeCunBottouOrrMuller98,MizutaniDreyfus10,PascanuBengio13,Pillaud-VivienRudiBach17,Qian99,RakhlinShamirSridharan12,RattraySaadAmari98,SutskeverMartensDahlHinton13,Sutton86,TM15,Xu11,Zhang04} and the references mentioned therein for numerical simulations and proofs of convergence rates for SGD type optimization algorithms, \cite{BercuFort13,BCN18,Ruder16} and the references mentioned therein for overview articles on SGD type optimization algorithms, and \cite{DeanETAL12,DengETAL13,Graves13,GravesMohamedHinton13,HintonETAL12,HintonSalakhutdinov06,KrizhevskySutskeverHinton12,LeCunBottouBengioHaffner98,SchaulZhangLeCun12} and the references mentioned therein for applications involving neural networks and SGD type optimization algorithms.

The case of a convex loss function is well-understood under mild further assumptions, for example, rates of convergence of the order $O(1/\sqrt{n})$ for SGD have been established in \cite{BCN18,Zhang04}. In the case of a strongly convex objective function these can be improved to $O(1/n)$, see \cite{HAK07,NJLS09,N13}.

The case of a non-convex objective function is considerably less well understood. In this case we have to distinguish two classes of results: The first class proves the convergence to zero (with or without rates) for the gradient of the objective function, thus implying the convergence to a critical point. The second class of results proves the convergence of the values of the loss function to their global minimum. Obviously, the second class of results are stronger and not implied by the first class, since these do not exclude convergence to saddle points or local minima. In the case of non-convex loss function rather complete results are known concerning the minimization of the gradient of the loss function. For example, the convergence of the gradient to zero with rates was shown in Lei, Hu, Li, \& Tang \cite{LHLT19} assuming a H\"older-regularity condition on the gradient of the loss function. This generalizes previous work Ghadimi, Lan, \& Zhang \cite{GLHZ13} which required a second moment boundedness condition, which in turn was generalized by previous works Ghadimi \& Lan \cite{GL13} and Reddi, Hefny, Sra, Poczos, \& Smola \cite{RHSPS16}. We note that while convergence to the global minimum with rates was obtained in \cite{GLHZ13} for the convex case, no results on the convergence of the value of the loss function have been shown in the non-convex case.

The convergence of the stochastic gradient descent method has been analysed in the literature under several additional assumptions replacing (strong) convexity, such as the error bounds condition in Luo \& Tseng \cite{LT93}, essential strong convexity \cite{LWRBS13}, weak strong convexity \cite{NNG15}, the restricted secant inequality \cite{ZY13}, and the quadratic growth condition Anitescu \cite{A00}. In these works, linear convergence rates are shown. In the notable contribution Karimi, Nutini, \& Schmidt \cite{KMS16} have shown that all of these conditions imply the Polyak-Lojasiewicz (PL) inequality, introduced in Lojasiewicz \cite{L63} and Polyak \cite{P63}, under which linear convergence of SGD is proven in \cite{KMS16}, thus generalizing these previous works. Recently, further progress was made in Lei, Hu, Li, \& Tang in \cite{LHLT19} where a boundedness assumption on the gradient of the objective function, required in \cite{KMS16}, was relaxed. We note that, while the PL condition does not require convexity, nor the uniqueness of global minimizers, it does exclude the existence of local minima, that is, assuming the PL condition each local minimum is a global minimum. Therefore, it is not implied by the assumptions made in the current work.

\subsection{Structure of the work}
The paper is organized as follows.  We will use the local smoothness of $\mathcal{M}\cap U$, the local smoothness of the objective function $f$, and the maximal nondegeneracy of the Hessian to identify a basin of attraction for SGD.  In Section~\ref{sec_geometry}, we present the geometric preliminaries that are used to identify this set.  In particular, in Proposition~\ref{nondegenerate} below we recall the existence of projections in a local neighborhoods of $\mathcal{M}\cap U$, in Proposition~\ref{cts_prop_tub} below we recall the existence of local tubular neighborhoods about $\mathcal{M}\cap U$, in Lemma~\ref{lem_main} below we prove a useful decomposition of $\nabla f$ into components normal and tangential to $\mathcal{M}\cap U$, and in Lemma~\ref{lem_projection} below we prove a contraction estimate that will be used to obtain a convergence rate for the gradient descent algorithms in discrete time.

In Section~\ref{section_det_cts}, for objective functions $f\colon\mathbb{R}^d\rightarrow\mathbb{R}$ that satisfy the conditions of Theorem~\ref{intro_ng_one_path}, we analyze the converge of the deterministic gradient descent algorithm in continuous time $\theta_t\in\mathbb{R}^d$, $t\in[0,\infty)$, that satisfies for every $t\in(0,\infty)$ that
\begin{equation} \frac{d}{dt}\theta_t=-\nabla f(\theta_t).\end{equation}
We prove in Proposition~\ref{cts_converge} below that the local smoothness of $\mathcal{M}\cap U$, the local smoothness of $f$, and the nondegeneracy of the Hessian imply the existence of a neighborhood $V\subseteq\mathbb{R}^d$ such that for every $\theta_0\in V$ the solution $\theta_t$, $t\in[0,\infty)$, converges exponentially fast to $\mathcal{M}\cap U$.  However, since in general neither $f$ nor $\nabla f$ are practically computable, and since continuous gradient descent cannot be implemented, the purpose of this section is to explain in a simplified setting the role of the assumptions and the geometric arguments from Section~\ref{sec_geometry}.

In Section~\ref{section_det_dis}, for objective functions $f\colon\mathbb{R}^d\rightarrow\mathbb{R}$ that satisfy the conditions of Theorem~\ref{intro_ng_one_path}, we analyze the converge of the deterministic gradient descent algorithm in discrete time $\theta_n\in\mathbb{R}^d$, $n\in\mathbb{N}_0$, that satisfies for $\rho\in(0,1)$, $r\in(0,\infty)$, for every $n\in\mathbb{N}$ that
\begin{equation} \theta_n=\theta_{n-1}-\frac{r}{n^\rho}\nabla f(\theta_{n-1}).\end{equation}
We prove in Proposition~\ref{dis_converge} below that there exists a neighborhood $V\subseteq\mathbb{R}^d$ such that for every $\theta_0\in V$ the solution $\theta_n$, $n\in\mathbb{N}_0$, converges exponentially quickly to $\mathcal{M}\cap U$.  However, while discrete gradient descent yields an implementable algorithm, the computational costs of $f$ and $\nabla f$ in general make it practically infeasible.  The purpose of this section is instead to explain how the geometric preliminaries of Section~\ref{sec_geometry}, and in particular Lemma~\ref{lem_main} and Lemma~\ref{lem_projection}, are applied in a simplified discrete setting.

In Section~\ref{sec_nonglobal}, we analyze the convergence of SGD to the manifold of local minima $\mathcal{M}\cap U$.  In Proposition~\ref{ng_converge} below, we prove the convergence of \eqref{intro_stoch_mean_10} to $\mathcal{M}\cap U$ in directions normal to the manifold.  Precisely, we identify a basin of attraction $V\subseteq\mathbb{R}^d$ such that, on the event that SGD remains in $V$, SGD converges to $\mathcal{M}\cap U$ in expectation with an algebraic rate.  It remains to estimate the probability that SGD remains in the basin of attraction $V$.

The first step is contained in Proposition~\ref{ng_tang} below, which estimates the maximal excursion of SGD in expectation.  Then, in Proposition~\ref{ng_good_set} below, we estimate the probability that SGD remains in a basin of attraction $V$ by separating this event into the event that SGD leaves $V$ in a direction normal to $\mathcal{M}\cap U$ and the event that SGD leaves $V$ in a direction tangential to $\mathcal{M}\cap U$.  Proposition~\ref{ng_converge} is used to estimate the first of these events, and Proposition~\ref{ng_tang} is used to estimate the second.  In Theorem~\ref{ng_one_path}, we combine Proposition~\ref{ng_converge} and Proposition~\ref{ng_good_set} to estimate the probability that SGD converges to within distance $\varepsilon\in(0,1]$ of $\mathcal{M}\cap U$.

In Corollary~\ref{cor_ng_one_path} below, we estimate the probability that $K\in\mathbb{N}$ independent copies of SGD fail to converge to within distance $\varepsilon\in(0,1]$ of $\mathcal{M}\cap U$.  In Theorem~\ref{thm_intro_ng_one_path} below we prove Theorem~\ref{intro_ng_one_path}, which relies on Lemma~\ref{lem_measurable_selection} below and estimates for the mini-batch approximation of the objective function.  Finally, in Corollary~\ref{cor_computation} below, we estimate the computational efficiency of the algorithm introduced in Theorem~\ref{intro_ng_one_path}.

In Section~\ref{stoch_discrete}, we prove that the estimates of Section~\ref{sec_nonglobal} can be improved under the additional assumption that $\mathcal{M}\cap U$ is compact.  These estimates apply, in particular, to the case when the objective function has a unique minimum.  The reason for the improved estimate of Theorem~\ref{thm_intro_ng_one_path_comp} below and the improved computational efficiency of Corollary~\ref{cor_computation_comp} below is that, in the compact case, SGD cannot escape a basin of attraction in directions tangential to the manifold.  It is therefore sufficient to take a smaller mini-batch approximation of the gradient.

In Section~\ref{sec_applications}, we prove that assumptions of Theorem~\ref{intro_ng_one_path} are satisfied by simple loss functions arising in machine learning applications.  In particular, we show that the assumptions are satisfied by objective functions $f\colon\mathbb{R}^d\rightarrow\mathbb{R}$ which satisfy that
   \begin{equation}\label{intro_aan}f(\theta)=\int_S\abs{u_\theta(x)-\varphi(x)}^p\mu(\dx),\end{equation}
   where $\theta\in\mathbb{R}^d$, $p\in[1,\infty)$, $\varphi$ a  measurable function on a measurable space $(S,\mathcal{S})$, and $(u_\theta\colon S\rightarrow\mathbb{R}\mathbb)_{\theta\in\mathbb{R}^d}$ is a jointly-measurable artificial neural network.  In this case, the function $F\colon\mathbb{R}^d\times S\rightarrow\mathbb{R}$ satisfies for every $(\theta,x)\in\mathbb{R}^d\times S$ that
   \begin{equation}\label{intro_aan_1}F(\theta,x)=\abs{u_\theta(x)-\varphi(x)}^p,\end{equation}
   and, for a probability space $(\Omega,\mathcal{F},\mathbb{P})$, the sequence of random variables $X_{k,n,m}\colon\Omega\rightarrow S$, $k,n,m\in\mathbb{N}$, are i.i.d.\ with distribution $\mu$.  For the objective functions considered in Section \ref{examples} and Section \ref{two_parameter} below, the global minima are non-unique and build locally smooth, non-compact manifolds of $\mathbb{R}^d$ on which Hessian of the objective function is maximally nondegenerate.

\section{Geometric preliminaries}\label{sec_geometry}

In this section, for an objective function $f\colon\mathbb{R}^d\rightarrow\mathbb{R}$ that satisfies the conditions of Theorem~\ref{intro_ng_one_path}, we will characterize the local geometry of the local manifold of minima $\mathcal{M}\cap U$.  The analysis will rely on on the notion of a projection to $\mathcal{M}\cap U$ which is, however, only well-defined in local neighborhoods of the local manifold.

In the following proposition, we prove that the projection map to the local manifold of minima is locally well-defined and smooth.  The proof is a consequence of Foote \cite[Lemma]{Foote} and the smoothness of $\mathcal{M}\cap U$.

\begin{prop}\label{def_projection}  Let $d\in\mathbb{N}$, $\mathfrak{d}\in\{1,\ldots,d-1\}$, let $\abs{\cdot}\colon\mathbb{R}^d\rightarrow\mathbb{R}$ be the standard norm on $\mathbb{R}^d$, and let $\mathcal{M}\cap U\subseteq\mathbb{R}^d$ be a non-empty $\mathfrak{d}$-dimensional $\C^1$-submanifold of $\mathbb{R}^d$.
Then for every $x_0\in(\mathcal{M}\cap U)$ there exists an open  neighborhood $V\subset\mathbb{R}^d$ such that
\begin{enumerate}[(i)]

\item $V$ is a neighborhood of $x_0$:  it holds that $x_0\in V$.

\item projections exist in $V$:  there exists a unique function $p\colon V\rightarrow(\mathcal{M}\cap U)$ which satisfies for every $x\in V$ that
\begin{equation}\label{VV_1}\abs{x-p(x)}=\inf \left\{\abs{x-y}\colon y\in(\mathcal{M}\cap U)\right\}.\end{equation}
\item the projection map is locally $\C^1$-smooth: the map $p\colon V\rightarrow (\mathcal{M}\cap U)$ is once continuously differentiable.
\end{enumerate}
\end{prop}

\begin{proof}[Proof of Proposition~\ref{def_projection}]   The proof is an immediate consequence of \cite[Lemma]{Foote} and the $\C^1$-regularity of $\mathcal{M}\cap U$.  \end{proof}

The family of subsets satisfying for a fixed $x_0\in(\mathcal{M}\cap U)$ the conclusion of Proposition~\ref{def_projection} will play an important role in the arguments to follow.  We therefore make a global definition, and define the projection map on a global neighborhood of $\mathcal{M}\cap U$.  The existence of the projection map is an immediate consequence of Proposition~\ref{def_projection}.

\begin{definition}\label{define_proj}  Let $d\in\mathbb{N}$, $\mathfrak{d}\in\{1,\ldots,d-1\}$, let $\mathcal{M}\cap U\subseteq\mathbb{R}^d$ be a non-empty $\mathfrak{d}$-dimensional $\C^1$-submanifold of $\mathbb{R}^d$.
\begin{enumerate}[(i)]
\item For every $x\in(\mathcal{M}\cap U)$ let $\Proj(x)\subseteq\mathcal{B}(\mathbb{R}^d)$ satisfy that
\begin{equation} \Proj(x)=\{V\subseteq\mathbb{R}^d\colon V\;\;\textrm{satisfies the conclusion of Proposition~\ref{def_projection} with $x_0=x$.}\}.\end{equation}
\item Let $p\colon\cup_{x\in(\mathcal{M}\cap U)}\left(\cup_{V\in\Proj(x)}V\right)\rightarrow(\mathcal{M}\cap U)$ be the unique function which satisfies for every $x\in \cup_{x\in(\mathcal{M}\cap U)}\left(\cup_{V\in\Proj(x)}V\right)$ that
\begin{equation}\abs{x-p(x)}=\inf \left\{\abs{x-y}\colon y\in(\mathcal{M}\cap U)\right\}.\end{equation}
\end{enumerate}
\end{definition}

The following proposition proves that for every $x\in(\mathcal{M}\cap U)$ the tangent space $T_x(\mathcal{M}\cap U)$ and normal space $\big(T_x(\mathcal{M}\cap U)\big)^\perp$ to $\mathcal{M}\cap U$ at $x$ are characterized respectively by the null space of Hessian of $f$ and the space on which the Hessian of $f$ is positive definite.
\begin{prop}\label{nondegenerate}  Let $d\in\mathbb{N}$, $\mathfrak{d}\in\{1,2,\ldots,d-1\}$, let $\abs{\cdot}\colon\mathbb{R}^d\rightarrow\mathbb{R}$ be the standard norm on $\mathbb{R}^d$, let $U\subseteq\mathbb{R}^d$ be an open set, let $f\colon U\rightarrow\mathbb{R}$ be a three times continuously differentiable function, let $\mathcal{M}\subseteq\mathbb{R}^d$ satisfy that
\begin{equation}\mathcal{M}=\big\{\theta\in\mathbb{R}^d\colon [f(\theta)=\inf\nolimits_{\vartheta\in\mathbb{R}^d} f(\vartheta)]\big\},\end{equation}
assume that $\mathcal{M}\cap U$ is a non-empty $\mathfrak{d}$-dimensional $\C^1$-submanifold of $\mathbb{R}^d$ and assume for every $\theta\in(\mathcal{M}\cap U)$ that $\rank((\Hess f)(\theta))=d-\mathfrak{d}$.  Then for every $x\in(\mathcal{M}\cap U)$ there exist a $(d-\mathfrak{d})$-dimensional subvectorspace $P_x\subseteq\mathbb{R}^d$ and a $\mathfrak{d}$-dimensional subvectorspace $N_x\subseteq\mathbb{R}^d$ such that
\begin{enumerate}[(i)]

\item it holds that
\begin{equation}\big(\Hess f\big)(x)(P_x) = P_x,\end{equation}
\item it holds for every $v\in P_x\backslash\{0\}$ that
\begin{equation}\big(\big[\big(\Hess f\big)(x)\big]v\big)\cdot v>0,\end{equation}
\item it holds that
\begin{equation}\big(\Hess f\big)(x)\vert_{N_x}=0,\end{equation}
\item it holds that
\begin{equation}N_x=T_x(\mathcal{M}\cap U),\end{equation}
\item  it holds that
\begin{equation}P_x=\big(T_x(\mathcal{M}\cap U)\big)^\perp.\end{equation}
\end{enumerate}
\end{prop}

\begin{proof}[Proof of Proposition~\ref{nondegenerate}]  Let $x\in(\mathcal{M}\cap U)$.  Since $\rank((\Hess f)(\theta))=d-\mathfrak{d}$, the symmetry of the Hessian implies that there exist subspaces $N_x,P_x\subseteq\mathbb{R}^d$ such that $\mathbb{R}^d=P_x\oplus N_x$, that $\dim(P_x)=d-\mathfrak{d}$, that
\begin{equation}\big(\Hess f\big)(x)(P_x) \subseteq P_x\;\;\textrm{with}\;\;\big(\Hess f\big)(x)\vert_{P_x}\;\;\textrm{strictly positive definite on}\;\;P_x,\end{equation}
that $\dim(N_x)=\mathfrak{d}$, and that
\begin{equation}\big(\Hess f\big)(x)\vert_{N_x}=0.\end{equation}
Let $\varepsilon\in(0,1)$ and suppose that $\gamma\colon  (-\varepsilon,\varepsilon)\rightarrow\mathcal{M}\cap U$ is a smooth curve which satisfies $\gamma(0)=x$.  Since $\nabla f|_{\mathcal{M}\cap U}=0$, it follows from the chain rule that
\begin{equation}\left.\frac{d}{dt} \nabla f(\gamma(t))\right|_{t=0}=\big(\Hess f\big)(x)\cdot\dot{\gamma}(0)=0.\end{equation}
It follows that $T_x(\mathcal{M}\cap U)\subseteq N_x$ and therefore, since $\dim(T_x(\mathcal{M}\cap U))=\mathfrak{d}$, it holds that $T_x(\mathcal{M}\cap U)= N_x$.  Since $\mathbb{R}^d=T_x(\mathcal{M}\cap U)\oplus \big(T_x(\mathcal{M}\cap U)\big)^\perp$, it holds that $P_x=\big(T_x(\mathcal{M}\cap U)\big)^\perp$, which completes the proof of Proposition~\ref{nondegenerate}.  \end{proof}

In the following lemma, for a point $x\in\mathbb{R}^d$ such that the projection $p(x)\in(\mathcal{M}\cap U)$ is well-defined, we prove that the difference $x-p(x)\in\mathbb{R}^d$ lies in the space normal to $\mathcal{M}\cap U$ at $p(x)$.  This fact will be used to obtain a rate of convergence for the discrete gradient descent algorithms.

\begin{lem}\label{normal}  Let $d\in\mathbb{N}$, $\mathfrak{d}\in\{1,2,\ldots,d-1\}$, let $\abs{\cdot}\colon\mathbb{R}^d\rightarrow\mathbb{R}$ be the standard norm on $\mathbb{R}^d$, let $U\subseteq\mathbb{R}^d$ be an open set, let $f\colon U\rightarrow\mathbb{R}$ be a three times continuously differentiable function, let $\mathcal{M}\subseteq\mathbb{R}^d$ satisfy that
\begin{equation}\mathcal{M}=\big\{\theta\in\mathbb{R}^d\colon [f(\theta)=\inf\nolimits_{\vartheta\in\mathbb{R}^d} f(\vartheta)]\big\},\end{equation}
assume that $\mathcal{M}\cap U$ is a non-empty $\mathfrak{d}$-dimensional $\C^1$-submanifold of $\mathbb{R}^d$, and assume for every $\theta\in(\mathcal{M}\cap U)$ that $\rank((\Hess f)(\theta))=d-\mathfrak{d}$.  Then for every $x_0\in(\mathcal{M}\cap U)$, for every $V\in\Proj(x_0)$ (cf. Definition~\ref{define_proj}), it holds for every $x\in V$ that
\begin{equation}x-p(x)\in T_{p(x)}\left(\mathcal{M}\cap U\right)^\perp.\end{equation}
\end{lem}

\begin{proof}[Proof of Lemma~\ref{normal}]  Let $x_0\in(\mathcal{M}\cap U)$, let $V\in\Proj(x_0)$, and let $p:V\rightarrow(\mathcal{M}\cap U)$ denote the projection map.  Let $x\in V$.  If $x\in(\mathcal{M}\cap U)$, the claim is immediate since then $x-p(x)=0$.  If $x\notin\mathcal{M}\cap U$, for some $\varepsilon\in(0,1)$ suppose that $\gamma\colon  (-\varepsilon,\varepsilon)\rightarrow\mathcal{M}\cap U$ is a smooth path which satisfies $\gamma(0)=p(x)$.  It holds that
\begin{equation}\left.\frac{d}{dt}\abs{x-\gamma(t)}^2\right|_{t=0}=-2\dot{\gamma}(0)\cdot(x-p(x))=0.\end{equation}
Therefore, since the curve $\gamma$ was arbitrary, it holds that $x-p(x)\in T_{p(x)}\left(\mathcal{M}\cap U\right)^\perp$, which completes the proof of Lemma~\ref{normal}. \end{proof}

In the following lemma, we derive a formula for the derivative of the distance function to the manifold in a neighborhood of $\mathcal{M}\cap U$.  The regularity of the distance function and the formula for its differential will be used to prove the convergence of the deterministic gradient descent algorithm in continuous time.

\begin{lem}\label{distance}  Let $d\in\mathbb{N}$, $\mathfrak{d}\in\{1,2,\ldots,d-1\}$, let $\abs{\cdot}\colon\mathbb{R}^d\rightarrow\mathbb{R}$ be the standard norm on $\mathbb{R}^d$, let $U\subseteq\mathbb{R}^d$ be an open set, let $f\colon U\rightarrow\mathbb{R}$ be a three times continuously differentiable function, let $\mathcal{M}\subseteq\mathbb{R}^d$ satisfy that
\begin{equation}\mathcal{M}=\big\{\theta\in\mathbb{R}^d\colon [f(\theta)=\inf\nolimits_{\vartheta\in\mathbb{R}^d} f(\vartheta)]\big\},\end{equation}
let $\mathbf{d}(\cdot,\mathcal{M}\cap U):\mathbb{R}^d\rightarrow\mathbb{R}$ be the function which satisfies for every $x\in\mathbb{R}^d$ that
\begin{equation} \mathbf{d}(x,\mathcal{M}\cap U)=\inf \left\{\abs{x-y}\colon y\in(\mathcal{M}\cap U)\right\},\end{equation}
assume that $\mathcal{M}\cap U$ is a non-empty $\mathfrak{d}$-dimensional $\C^1$-submanifold of $\mathbb{R}^d$, and assume for every $\theta\in(\mathcal{M}\cap U)$ that $\rank((\Hess f)(\theta))=d-\mathfrak{d}$.  Then for every $x_0\in(\mathcal{M}\cap U)$, for every $V\in\Proj(x_0)$ (cf. Definitition~\ref{define_proj}), it holds for every $x\in V\setminus\mathcal{M}\cap U$ that
\begin{equation}(\nabla \mathbf{d})(x,\mathcal{M}\cap U)=\frac{x-p(x)}{\abs{x-p(x)}}.\end{equation}
\end{lem}

\begin{proof}[Proof of Lemma~\ref{distance}]  Let $x_0\in(\mathcal{M}\cap U)$ and let $V\in\Proj(x_0)$.  It follows from Proposition~\ref{def_projection} that
\begin{equation}x\in V\mapsto \abs{x-p(x)}^2=\mathbf{d}(x,\mathcal{M}\cap U)^2\;\;\textrm{is $\C^1$}.\end{equation}
The chain rule implies for every $i\in\{1,\ldots,d\}$ that
\begin{equation}\frac{\partial}{\partial x_i}\mathbf{d}(x,\mathcal{M}\cap U)^2=\frac{\partial}{\partial x_i}\abs{x-p(x)}^2 =2(x-p(x))\cdot e_i -2(x-p(x))\cdot \frac{\partial}{\partial x_i} p(x).\end{equation}
Since $\frac{\partial}{\partial x_i} p(x)\in N_{p(x)}$ and since $x-p(x)\in P_{p(x)}$ it follows from Lemma~\ref{normal} that
\begin{equation}(x-p(x))\cdot \frac{\partial}{\partial x_i} p(x)=0.\end{equation}
Since for every $x\in V\setminus\mathcal{M}\cap U$ it holds that
\begin{equation}\nabla\mathbf{d}(x,\mathcal{M}\cap U)^2=2\mathbf{d}(x,\mathcal{M}\cap U)\nabla\mathbf{d}(x,\mathcal{M}\cap U)=2(x-p(x)),\end{equation}
it holds for every $x\in V\setminus\mathcal{M}\cap U$ that
\begin{equation}\nabla\mathbf{d}(x,\mathcal{M}\cap U)=\frac{x-p(x)}{\abs{x-p(x)}},\end{equation}
which completes the proof of Lemma~\ref{distance}.  \end{proof}

We will now quantify what are essentially local tubular neighborhoods of the local manifold $\mathcal{M}\cap U$.  The following definition will play an important role throughout the paper.

\begin{definition}\label{dtbn} Let $d\in\mathbb{N}$, $\mathfrak{d}\in\{1,\ldots,d-1\}$, let $\mathcal{M}\cap U\subseteq\mathbb{R}^d$ be a non-empty $\mathfrak{d}$-dimensional $\C^1$-submanifold of $\mathbb{R}^d$.  For every $x\in(\mathcal{M}\cap U)$, $R,\delta\in(0,\infty)$ let $V_{R,\delta}(x)\subseteq\mathbb{R}^d$ satisfy that
\begin{equation}V_{R,\delta}(x)=\{y+v\colon y\in (\overline{B}_R(x)\cap\mathcal{M}\cap U)\;\textrm{and}\;v\in \big(T_y(\mathcal{M}\cap U)\big)^\perp\;\textrm{with}\;\abs{v}<\delta\}.\end{equation}
\end{definition}

A useful feature of the sets defined in Definition~\ref{dtbn} is that the parameter $R\in(0,\infty)$ can be used to quantify distance in directions tangential to the manifold $\mathcal{M}\cap U$, and the parameter $\delta\in(0,\infty)$ can be used to quantify distance in directions normal to the manifold $\mathcal{M}\cap U$.  The following technical proposition will be used to prove Proposition~\ref{dis_converge} below and Lemma~\ref{lem_tub} below.

\begin{prop}\label{cts_prop_tub}  Let $d\in\mathbb{N}$, $\mathfrak{d}\in\{1,\ldots,d-1\}$, let $\abs{\cdot}\colon\mathbb{R}^d\rightarrow\mathbb{R}$ be the standard norm on $\mathbb{R}^d$, let $\mathcal{M}\cap U\subseteq\mathbb{R}^d$ be a non-empty $\mathfrak{d}$-dimensional $\C^1$-submanifold of $\mathbb{R}^d$, and let $\mathbf{d}(\cdot,\mathcal{M}\cap U):\mathbb{R}^d\rightarrow\mathbb{R}$ be the function which satisfies for every $x\in\mathbb{R}^d$ that
\begin{equation} \mathbf{d}(x,\mathcal{M}\cap U)=\inf \left\{\abs{x-y}\colon y\in(\mathcal{M}\cap U)\right\}.\end{equation}
Then for every $x_0\in(\mathcal{M}\cap U)$, for every $V\in\Proj(x_0)$ (cf. Definition~\ref{define_proj}), there exist $R_0,\delta_0\in(0,\infty)$ such that for every $R\in(0,R_0]$, $\delta\in(0,\delta_0]$,

\begin{enumerate}[(i)]

\item it holds that $\overline{V}_{R,\delta}(x_0)\subseteq V$ (cf. Definition~\ref{dtbn}),
\item it holds that
\begin{equation}V_{R,\delta}(x_0) = \{x\in\mathbb{R}^d\colon\mathbf{d}(x,\mathcal{M}\cap U)=\mathbf{d}(x,\overline{B}_R(x_0)\cap\mathcal{M}\cap U)<\delta\},\end{equation}
\item it holds for every $x\in (\overline{B}_R(x_0)\cap\mathcal{M}\cap U)$ and $v\in \big(T_x(\mathcal{M}\cap U)\big)^\perp$ with $\abs{v}<\delta$ that
\begin{equation}p(x+v)=x.\end{equation}
\end{enumerate}
\end{prop}

\begin{proof}[Proof of Proposition~\ref{cts_prop_tub}] Let $x_0\in(\mathcal{M}\cap U)$.  For every $R,\delta\in(0,\infty)$ let $\tilde{V}_{R,\delta}(x_0)\subseteq\mathbb{R}^d$ satisfy that
\begin{equation}\tilde{V}_{R,\delta}(x_0)=\{x\in\mathbb{R}^d\colon\mathbf{d}(x,\mathcal{M}\cap U)=\mathbf{d}(x,\overline{B}_R(x_0)\cap\mathcal{M}\cap U)<\delta\}.\end{equation}
Let $V\in\Proj(x_0)$.  Since $U,V\subseteq\mathbb{R}^d$ are open, there exist $R_0,\delta_0\in(0,\infty)$ such that for every $R\in(0,R_0]$ it holds that
\begin{equation}\overline{B}_R(x_0)\cap\mathcal{M}\subseteq\mathcal{M}\cap U,\end{equation}
and for every $R\in(0,R_0]$, $\delta\in(0,\delta_0]$ that
\begin{equation}\label{tub_1}V_{R,\delta}(x_0)\subseteq V\;\;\textrm{and}\;\;\tilde{V}_{R,\delta}(x_0)\subseteq V.\end{equation}
Following \cite[Lemma]{Foote}, the normal bundle $T\left(\mathcal{M}\cap U\right)^\perp\subseteq\mathbb{R}^{2d}$ satisfies that
\begin{equation}T\left(\mathcal{M}\cap U\right)^\perp =\left\{(x,v)\in\mathbb{R}^d\times\mathbb{R}^d\colon x\in(\mathcal{M}\cap U)\;\textrm{and}\;v\in T_x\left(\mathcal{M}\cap U\right)^\perp\right\}.\end{equation}
Since $\mathcal{M}\cap U$ is a $\mathfrak{d}$-dimensional $\C^1$-submanifold, it follows that $T(\mathcal{M}\cap U)^\perp\subseteq\mathbb{R}^{2d}$ is a $d$-dimensional $\C^1$-submanifold.  Furthermore, the map $\Psi\colon  T(\mathcal{M}\cap U)^\perp\rightarrow\mathbb{R}^d$ which satisfies for every $(x,v)\in T\left(\mathcal{M}\cap U\right)^\perp$ that  $\Psi(x,v)=x+v$ satisfies for every $x\in(\mathcal{M}\cap U)$ that
\begin{equation}D_{(x,0)}\Psi\colon  T_{(x,0)}\big(T\left(\mathcal{M}\cap U\right)^\perp\big)\rightarrow T_x\mathbb{R}^d\;\;\textrm{is nonsingular.}\end{equation}
It follows from the inverse function theorem that there exists $\delta_1\in(0,(\delta_0\wedge\nicefrac{R_0}{4}))$ such that for every $R\in(0,\nicefrac{R_0}{2}]$, $\delta\in(0,\delta_1]$ it holds that
\begin{equation}\label{tub_2}\Psi\colon  \{(x,v)\in\left(TM\right)^\perp\colon x\in \overline{B}_{R+2\delta_1}(x_0)\;\textrm{and}\;\abs{v}<\delta\}\rightarrow V_{R+2\delta_1,\delta}(x_0)\;\;\textrm{is injective.}\end{equation}
Let $R\in(0,\nicefrac{R_0}{2}]$, $\delta\in(0,\delta_1]$.  We will first prove that $\tilde{V}_{R,\delta}(x_0)\subseteq V_{R,\delta}(x_0)$.  Let $x\in \tilde{V}_{R,\delta}(x_0)$.  If $x\in \overline{B}_R(x_0)\cap\mathcal{M}\cap U$ then it holds by definition that $x\in V_{R,\delta}(x_0)$.  If $x\notin \overline{B}_R(x_0)\cap\mathcal{M}\cap U$, since $x\in \tilde{V}_{R,\delta}(x_0)$ implies that $\mathbf{d}(x,\mathcal{M}\cap U)=\mathbf{d}(x,\overline{B}_R(x_0)\cap\mathcal{M}\cap U)$ and since the choice of $R_0\in(0,\infty)$ implies that
\begin{equation}\overline{B}_R(x_0)\cap\mathcal{M}\cap U=\overline{B}_R(x_0)\cap\mathcal{M}\;\;\textrm{is a closed subset of $\mathbb{R}^d$,}\end{equation}
it holds that $p(x)\in \overline{B}_R(x_0)\cap\mathcal{M}\cap U$.   Since $\mathbf{d}(x,\mathcal{M}\cap U)=\mathbf{d}(x,\overline{B}_R(x_0)\cap\mathcal{M}\cap U)=\abs{x-p(x)}< \delta$ and since it holds that
\begin{equation}x=p(x)+ \abs{x-p(x)}\frac{x-p(x)}{\abs{x-p(x)}},\end{equation}
for $\frac{x-p(x)}{\abs{x-p(x)}}\in  T_x\left(\mathcal{M}\cap U\right)^\perp$ by Lemma~\ref{normal}, it holds that $x\in V_{R,\delta}(x_0)$.  This completes the proof that $\tilde{V}_{R,\delta}(x_0)\subseteq V_{R,\delta}(x_0)$.
It remains to prove that $V_{R,\delta}(x_0)\subseteq \tilde{V}_{R,\delta}(x_0)$.  Let $x\in V_{R,\delta}(x_0)$.  It is necessary to show that $\mathbf{d}(x,\mathcal{M}\cap U)=\mathbf{d}(x,\overline{B}_R(x_0)\cap\mathcal{M}\cap U)< \delta$.  The definition of $V_{R,\delta}(x_0)$ implies that there exist $\tilde{x}\in (\overline{B}_R(x_0)\cap\mathcal{M}\cap U)$ and $\tilde{v}\in T_{\tilde{x}}\left(\mathcal{M}\cap U\right)^\perp$ with $\abs{\tilde{v}}<\delta$ which satisfy that $x=\tilde{x}+\tilde{v}$.  We will prove that $p(x)=\tilde{x}$.  By contradiction, suppose that $p(x)\neq \tilde{x}$.  This implies that
\begin{equation}\abs{x-p(x)}<\abs{x-\tilde{x}}=\abs{\tilde{v}}<\delta.\end{equation}
It follows from the triangle inequality that
\begin{equation}\label{tub_3}\abs{p(x)-\tilde{x}}\leq \abs{p(x)-x}+\abs{x-\tilde{x}}<2\delta\leq 2\delta_1,\end{equation}
which proves that
\begin{equation}\label{tub_4}x=\tilde{x}+\tilde{v}=p(x)+(x-p(x)),\end{equation}
for $x-p(x)\in T_{p(x)}\left(\mathcal{M}\cap U\right)^\perp$ by Lemma~\ref{normal} with $\abs{x-p(x)}< \delta$.  Since $\tilde{x}\in (\overline{B}_R(x_0)\cap\mathcal{M}\cap U)$, it follows from \eqref{tub_3} that $p(x)\in(\overline{B}_{R+2\delta_1}(x_0)\cap\mathcal{M}\cap U)$.  Since $R\in(0,\nicefrac{R_0}{2}]$ and since $\delta\in(0,\delta_1]$, equation \eqref{tub_4} contradicts \eqref{tub_2}, which states that $\Psi$ is injective on the set
\begin{equation}\{(x,v)\in\left(TM\right)^\perp\colon x\in B_{R+2\delta_1}(x_0)\;\textrm{and}\;\abs{v}<\delta\}.\end{equation}
We conclude that $p(x)=\tilde{x}$, which implies that
\begin{equation}\mathbf{d}(x,\mathcal{M}\cap U)=\mathbf{d}(x,\overline{B}_R(x_0)\cap\mathcal{M}\cap U)=\abs{x-p(x)}=\abs{\tilde{v}}<\delta.\end{equation}
Therefore, it holds that $V_{R,\delta}(x_0)\subseteq \tilde{V}_{R,\delta}(x_0)$, which completes the proof that $\tilde{V}_{R,\delta}(x_0)=V_{R,\delta}(x_0)$.  The final claim follows from a repetition of the arguments leading to \eqref{tub_3} and \eqref{tub_4}.  This completes the proof of of Proposition~\ref{cts_prop_tub}.\end{proof}

The following two lemmas contain the primary use of the nondegeneracy assumption, which states for every $\theta\in(\mathcal{M}\cap U)$ that
\begin{equation}\rank((\Hess f)(\theta))=d-\mathfrak{d}=\codim(\mathcal{M}\cap U).\end{equation}
The first of these proves that $\nabla f$ can be split into a component that is approximately normal to the local manifold of minima $\mathcal{M}\cap U$, and into a component that is approximately tangential to $\mathcal{M}\cap U$.  We will use the normal component to obtain a rate of convergence for the gradient descent algorithms.  The contribution of the tangential component will create errors that will need to be controlled.

\begin{lem}\label{lem_main}  Let $d\in\mathbb{N}$, $\mathfrak{d}\in\{1,2,\ldots,d-1\}$, let $\abs{\cdot}\colon\mathbb{R}^d\rightarrow\mathbb{R}$ be the standard norm on $\mathbb{R}^d$, let $U\subseteq\mathbb{R}^d$ be an open set, let $f\colon U\rightarrow\mathbb{R}$ be a three times continuously differentiable function, let $\mathcal{M}\subseteq\mathbb{R}^d$ satisfy that
\begin{equation}\mathcal{M}=\big\{\theta\in\mathbb{R}^d\colon [f(\theta)=\inf\nolimits_{\vartheta\in\mathbb{R}^d} f(\vartheta)]\big\},\end{equation}
let $\mathbf{d}(\cdot,\mathcal{M}\cap U):\mathbb{R}^d\rightarrow\mathbb{R}$ be the function which satisfies for every $x\in\mathbb{R}^d$ that
\begin{equation} \mathbf{d}(x,\mathcal{M}\cap U)=\inf \left\{\abs{x-y}\colon y\in(\mathcal{M}\cap U)\right\},\end{equation}
assume that $\mathcal{M}\cap U$ is a non-empty $\mathfrak{d}$-dimensional $\C^1$-submanifold of $\mathbb{R}^d$, and assume for every $\theta\in(\mathcal{M}\cap U)$ that $\rank((\Hess f)(\theta))=d-\mathfrak{d}$.  Then for every $x_0\in(\mathcal{M}\cap U)$ there exist $R_0,\delta_0,c\in(0,\infty)$ and $V\in\Proj(x_0)$ (cf. Definition~\ref{define_proj}) such that for every $R\in(0,R_0]$, $\delta\in(0,\delta_0]$ it holds that (cf. Definition~\ref{dtbn})
\begin{equation}\overline{V}_{R,\delta}(x_0)\subseteq V,\end{equation}
and for every $x\in V_{R,\delta}(x_0)$ there exists $\varepsilon_x\in\mathbb{R}^d$ which satisfies $\abs{\varepsilon_x}\leq c\mathbf{d}(x,\mathcal{M}\cap U)^2$ such that
\begin{equation}\nabla f(x)=\big(\Hess f\big)(p(x))\cdot(x-p(x))+\varepsilon_x.\end{equation}
\end{lem}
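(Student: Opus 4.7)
The plan is to combine Proposition~\ref{cts_prop_tub} with a second-order Taylor expansion of $\nabla f$ about the projection point $x_*$, and then to exploit the fact that $\nabla f$ vanishes identically on the local manifold of minima $\mathcal{M}\cap U$.

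First, fix $x_0\in\mathcal{M}\cap U$. I would choose an open neighborhood $V_*(x_0)\subseteq U$ satisfying Proposition~\ref{def_projection}, and, without loss of generality, shrink it so that $\overline{V_*(x_0)}$ is a compact subset of $U$. By Proposition~\ref{cts_prop_tub}, one can then select $R_0,\delta_0\in(0,\infty)$ small enough that $\overline{V}_{R,\delta}(x_0)\subseteq V_*(x_0)$ for every $R\in(0,R_0]$ and $\delta\in(0,\delta_0]$, and such that for every $x\in V_{R,\delta}(x_0)$ the projection $x_*\in \overline{B}_R(x_0)\cap\mathcal{M}\cap U$ is well-defined and satisfies $|x-x_*|=\dd(x,\mathcal{M}\cap U)<\delta$.

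Second, since $f|_U$ is three times continuously differentiable, the map $\nabla f\colon V_*(x_0)\to\mathbb{R}^d$ is $\C^2$, and the third derivatives of $f$ are continuous on the compact set $\overline{V_*(x_0)}\subseteq U$, hence bounded there. Taylor's theorem with integral remainder applied to $\nabla f$ along the segment from $x_*$ to $x$, which remains in the convex hull of $\overline{V}_{R,\delta}(x_0)$ provided $R_0,\delta_0$ are taken small enough to ensure this segment lies in $V_*(x_0)$, then yields a constant $c\in(0,\infty)$ (independent of $x$) such that for every $x\in V_{R,\delta}(x_0)$,
\begin{equation}
\bigl|\nabla f(x)-\nabla f(x_*)-\nabla^2 f(x_*)(x-x_*)\bigr|\leq c\,|x-x_*|^2.
\end{equation}

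Third, since $x_*\in\mathcal{M}\cap U$ is a global minimizer of $f$ (and since $\nabla f$ is well-defined at $x_*$ by the $\C^3$ assumption on $f|_U$), it follows that $\nabla f(x_*)=0$. Setting $\varepsilon_x:=\nabla f(x)-\nabla^2 f(x_*)(x-x_*)$ then gives the identity $\nabla f(x)=\nabla^2 f(x_*)(x-x_*)+\varepsilon_x$ together with the estimate $|\varepsilon_x|\leq c\,|x-x_*|^2=c\,\dd(x,\mathcal{M}\cap U)^2$, completing the proof.

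There is no substantive obstacle here; the only points requiring care are (i) shrinking $V_*(x_0)$ to have compact closure in $U$ so that the third derivatives of $f$ are bounded uniformly, giving a single constant $c$ valid on the whole tubular neighborhood, (ii) using Proposition~\ref{cts_prop_tub} to identify $|x-x_*|$ with $\dd(x,\mathcal{M}\cap U)$, and (iii) ensuring that the segment from $x$ to $x_*$ along which Taylor's theorem is applied is contained in $V_*(x_0)$, which can be arranged by taking $R_0$ and $\delta_0$ smaller if necessary.
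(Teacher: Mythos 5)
Your proposal is correct and follows essentially the same route as the paper: the paper likewise writes $\nabla f(x)=\int_0^1\nabla^2 f(x_*+s(x-x_*))\cdot(x-x_*)\,ds$ using $\nabla f|_{\mathcal{M}\cap U}=0$, splits off $\nabla^2 f(x_*)\cdot(x-x_*)$, and bounds the remainder by $c\,\dd(x,\mathcal{M}\cap U)^2$ via the $\C^3$ bound on the compact closure $\overline{V}_{R,\delta}(x_0)$. Your point (iii) about the segment is handled automatically, since $x=x_*+v$ with $v\in T_{x_*}(\mathcal{M}\cap U)^\perp$ and $|v|<\delta$ implies the whole segment $x_*+sv$, $s\in[0,1]$, lies in $V_{R,\delta}(x_0)$ by definition of the tubular neighborhood.
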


\begin{proof}[Proof of Lemma~\ref{lem_main}]  Let $x_0\in(\mathcal{M}\cap U)$ and $R\in(0,\infty)$.  Since $U\subseteq\mathbb{R}^d$ is an open set, there exists $V\in\Proj(x_0)$ which satisfies that $V\subseteq U$.  Since $V$ is open, fix $R_0,\delta_0\in(0,\infty)$ such that for every $R\in(0,R_0]$, $\delta\in(0,\delta_0]$ it holds that
\begin{equation}\overline{V}_{R,\delta}(x_0)\subseteq V.\end{equation}
Due to the compactness of $\overline{V}_{R,\delta}(x_0)$ and the regularity of $f$, there exists $c\in(0,\infty)$ which satisfies for every $R\in(0,R_0]$, $\delta\in(0,\delta_0]$ that
\begin{equation}\label{lem_main_00}\norm{f}_{\C^3(V_{R,\delta}(x_0))}=\sup_{0\leq k\leq 3}\norm{\nabla^kf}_{L^\infty(V_{R,\delta_0}(x_0);\mathbb{R}^{(d^k)})}\leq c.\end{equation}
Let $x\in V_{R,\delta}(x_0)$.  By integration, since $\left.\nabla f\right|_{\mathcal{M}\cap U}=0$, it holds that
\begin{equation}\label{lem_main_1}\begin{aligned}\nabla f(x) & =  \int_0^1\big(\Hess f\big)(p(x)+s(x-p(x)))\cdot \left(x-p(x)\right)\ds \\ & =  \big(\Hess f\big)(p(x))\cdot \left(x-p(x)\right) \\ & \quad + \int_0^1\left(\big(\Hess f\big)(p(x)+s(x-p(x)))-\big(\Hess f\big)(p(x))\right)\cdot \left(x-p(x)\right)\ds. \end{aligned}\end{equation}
It follows from \eqref{lem_main_00}, the local regularity of $f$, and the definition of the projection that there exists $c\in(0,\infty)$ which satisfies that
\begin{equation}\label{lem_main_2}\begin{aligned}  \abs{\int_0^1\left(\big(\Hess f\big)(p(x)+s(x-p(x)))-\big(\Hess f\big)(p(x))\right)\cdot \left(x-p(x)\right)\ds} &  \leq  c\mathbf{d}(x,\mathcal{M}\cap U)^2\int_0^1 s\ds \\ & \leq c \mathbf{d}(x,\mathcal{M}\cap U)^2.\end{aligned}\end{equation}
After defining $\varepsilon_x\in\mathbb{R}^d$ which satisfies that
\begin{equation}\varepsilon_x =\int_0^1\left(\big(\Hess f\big)(p(x)+s(x-p(x)))-\big(\Hess f\big)(p(x))\right)\cdot \left(x-p(x)\right)\ds,\end{equation}
equation \eqref{lem_main_1} and estimate \eqref{lem_main_2} complete the proof of Lemma~\ref{lem_main}. \end{proof}

The following lemma will play an important role in the analysis of the deterministic and stochastic gradient descent algorithms in discrete time.  In the context of Lemma~\ref{lem_main}, for every $x\in\mathbb{R}^d$ with $p(x)\in(\mathcal{M}\cap U)$ well-defined, the following lemma quantifies the convergence of gradient descent to $\mathcal{M}\cap U$.

\begin{lem}\label{lem_projection}  Let $d\in\mathbb{N}$, $\mathfrak{d}\in\{1,2,\ldots,d-1\}$, let $\abs{\cdot}\colon\mathbb{R}^d\rightarrow\mathbb{R}$ be the standard norm on $\mathbb{R}^d$, let $U\subseteq\mathbb{R}^d$ be an open set, let $f\colon U\rightarrow\mathbb{R}$ be a three times continuously differentiable function, let $\mathcal{M}\subseteq\mathbb{R}^d$ satisfy that
\begin{equation}\mathcal{M}=\big\{\theta\in\mathbb{R}^d\colon [f(\theta)=\inf\nolimits_{\vartheta\in\mathbb{R}^d} f(\vartheta)]\big\},\end{equation}
let $\mathbf{d}(\cdot,\mathcal{M}\cap U):\mathbb{R}^d\rightarrow\mathbb{R}$ be the function which satisfies for every $x\in\mathbb{R}^d$ that
\begin{equation} \mathbf{d}(x,\mathcal{M}\cap U)=\inf \left\{\abs{x-y}\colon y\in(\mathcal{M}\cap U)\right\},\end{equation}
assume that $\mathcal{M}\cap U$ is a non-empty $\mathfrak{d}$-dimensional $\C^1$-submanifold of $\mathbb{R}^d$, and assume for every $\theta\in(\mathcal{M}\cap U)$ that $\rank((\Hess f)(\theta))=d-\mathfrak{d}$.  Then for every $x_0\in(\mathcal{M}\cap U)$ there exist $R_0,\delta_0,\mathfrak{r},\in(0,\infty)$, $\lambda\in(0,\infty)$ such that
\begin{equation}  \lambda\leq \max_{x\in \mathcal{M}\cap U\cap \overline{B}_R(x_0)}\abs{\big(\Hess f\big)(x)},\end{equation}
and $V\in\Proj(x_0)$ (cf. Definition~\ref{define_proj}) such that for every $R\in(0,R_0]$, $\delta\in(0,\delta_0]$, $r\in(0,\mathfrak{r}]$, $x\in V_{R,\delta}(x_0)$ it holds that
\begin{equation}\overline{V}_{R,\delta}(x_0)\subseteq V,\end{equation}
that
\begin{equation}\begin{aligned} \mathbf{d}\left(x- r\big(\Hess f\big)(p(x))\cdot(x-p(x)),\mathcal{M}\cap U\right) & \leq \abs{(x-p(x))- r\big(\Hess f\big)(p(x))\cdot(x-p(x))}\\ & \leq \left(1-\lambda r\right)\mathbf{d}(x,\mathcal{M}\cap U),\end{aligned}\end{equation}
and that
\begin{equation}\left(\big(\Hess f\big)(p(x))\cdot(x-p(x))\right)\cdot (x-p(x))\geq \lambda \mathbf{d}(x,\mathcal{M}\cap U)^2.\end{equation}
\end{lem}

\begin{proof}[Proof of Lemma~\ref{lem_projection}]  Let $x_0\in(\mathcal{M}\cap U)$.  Since $U\subseteq\mathbb{R}^d$ is an open subset, there exists $V\in\Proj(x_0)$ which satisfies that $V\subseteq U$.  Fix $R_0,\delta_0\in(0,\infty)$ such that every $R\in(0,R_0]$, $\delta\in(0,\delta_0]$ it holds that (cf. Definition~\ref{dtbn})
\begin{equation}\overline{V}_{R,\delta}(x_0)\subseteq V.\end{equation}
Due to the compactness of $\overline{V}_{R_0,\delta_0}(x_0)$ and the regularity of $f$, there exists $c\in(0,\infty)$ which satisfies for every $R\in(0,R_0]$, $\delta\in(0,\delta_0]$ that
\begin{equation}\label{lem_projection_00}\norm{f}_{\C^3(V_{R,\delta}(x_0))}\leq c.\end{equation}
Let $x\in V_{R,\delta}(x_0)$.  For the first claim, using \eqref{lem_projection_00}, fix $ \mathfrak{r}\in(0,\infty)$ which satisfies that
\begin{equation}\mathfrak{r}\left(\max_{x\in V_{R_0,\delta_0}(x_0)} \abs{\big(\Hess f\big)(p(x))}\right)\leq 1.\end{equation}
Let $ r\in(0, \mathfrak{r}]$.  The definition of the distance to $\mathcal{M}\cap U$ implies that
\begin{equation}\label{lem_projection_1}\mathbf{d}\left(x- r\big(\Hess f\big)(p(x))\cdot(x-p(x)), \mathcal{M}\cap U\right)\leq \abs{(x-p(x))- r\big(\Hess f\big)(p(x))\cdot(x-p(x))}.\end{equation}
Since the nondegeneracy assumption states that
\begin{equation}\rank((\Hess f)(p(x)))=d-\mathfrak{d}=\codim(\mathcal{M}\cap U),\end{equation}
Lemma~\ref{normal} below and \eqref{lem_projection_00} prove that there exists for $\lambda\in(0,\infty)$ which satisfies that
\begin{equation}\label{lem_projection_01}\lambda\leq\max_{x\in \mathcal{M}\cap U\cap \overline{B}_R(x_0)}\abs{\big(\Hess f\big)(p(x))},\end{equation}
for which we have that
\begin{equation}\label{lem_projection_2}\abs{(x-p(x))- r\big(\Hess f\big)(p(x))\cdot(x-p(x))}\leq (1- r\lambda)\abs{x-p(x)}=(1- r\lambda)\mathbf{d}(x,\mathcal{M}\cap U),\end{equation}
where the choice of $ \mathfrak{r}$ and \eqref{lem_projection_01} guarantee that $(1- r\lambda)\geq0$.  In combination, estimates \eqref{lem_projection_1}, \eqref{lem_projection_01}, and \eqref{lem_projection_2} complete the proof of the first claim.  The proof of the second claim is similar.  For every $x\in V_{R,\delta}(x_0)$, the nondegeneracy assumption, Lemma~\ref{normal}, and \eqref{lem_projection_00} prove that there exists $\lambda\in(0,\infty)$ which satisfies \eqref{lem_projection_01} such that
\begin{equation}\left(\big(\Hess f\big)(p(x))\cdot(x-p(x))\right)\cdot (x-p(x))\geq \lambda \abs{x-p(x)}^2= \lambda\mathbf{d}(x,\mathcal{M}\cap U)^2,\end{equation}
which completes the proof of Lemma~\ref{lem_projection}.  \end{proof}

\section{Continuous deterministic gradient descent}\label{section_det_cts}

In this section, for an objective function $f\colon\mathbb{R}^d\rightarrow\mathbb{R}$ which satisfies the conditions of Theorem~\ref{intro_ng_one_path}, we will analyze the local convergence to the local manifold of minima $\mathcal{M}\cap U$ of the deterministic gradient descent algorithm in continuous time $\theta_t\in\mathbb{R}^d$, $t\in[0,\infty)$, which satisfies for every $t\in(0,\infty)$ that
\begin{equation}\label{cts_sgd} \frac{d}{dt}\theta_t=-\nabla f(\theta_t).\end{equation}
We will prove that the solution of \eqref{cts_sgd} converges to the local manifold of minima $\mathcal{M}\cap U$, provided the initial condition is chosen in a sufficiently small neighborhood of $\mathcal{M}\cap U$.  The proof can be outlined as follows.  Given any $x_0\in \mathcal{M}\cap U$, we first fix an open neighborhood $x_0$ which satisfies the conclusions of Lemma~\ref{lem_main} and Lemma~\ref{lem_projection}.  Then, for initial data $\theta_0$ in this neighborhood, we quantify the convergence of the solution \eqref{cts_sgd} to $\mathcal{M}\cap U$ in directions normal to the manifold, using the decomposition of $\nabla f$ from Lemma~\ref{lem_main}.  Finally, after fixing a smaller neighborhood about $x_0$, we prove that the tangential components of the gradient of $\nabla f$ do not take the trajectory from the basin of attraction.

\begin{prop}\label{cts_converge}  Let $d\in\mathbb{N}$, $\mathfrak{d}\in\{1,2,\ldots,d-1\}$, let $\abs{\cdot}\colon\mathbb{R}^d\rightarrow\mathbb{R}$ be the standard norm on $\mathbb{R}^d$, let $U\subseteq\mathbb{R}^d$ be an open set, let $f\colon U\rightarrow\mathbb{R}$ be a three times continuously differentiable function, let $\mathcal{M}\subseteq\mathbb{R}^d$ satisfy that
\begin{equation}\mathcal{M}=\big\{\theta\in\mathbb{R}^d\colon [f(\theta)=\inf\nolimits_{\vartheta\in\mathbb{R}^d} f(\vartheta)]\big\},\end{equation}
let $\mathbf{d}(\cdot,\mathcal{M}\cap U):\mathbb{R}^d\rightarrow\mathbb{R}$ be the function which satisfies for every $x\in\mathbb{R}^d$ that
\begin{equation} \mathbf{d}(x,\mathcal{M}\cap U)=\inf \left\{\abs{x-y}\colon y\in(\mathcal{M}\cap U)\right\},\end{equation}
assume that $\mathcal{M}\cap U$ is a non-empty $\mathfrak{d}$-dimensional $\C^1$-submanifold of $\mathbb{R}^d$, and assume for every $\theta\in(\mathcal{M}\cap U)$ that $\rank((\Hess f)(\theta))=d-\mathfrak{d}$.  Then for every $x_0\in(\mathcal{M}\cap U)$ there exist $R_0,\delta_0,\lambda\in(0,\infty)$ such that for every $R\in(0,R_0]$, $\delta\in(0,\delta_0]$, $\theta_0\in V_{\nicefrac{R}{2},\delta}(x_0)$ (cf. Definition~\ref{dtbn}), for $\theta_t\in\mathbb{R}^d$, $t\in[0,\infty)$, which satisfies for every $t\in(0,\infty)$ that
\begin{equation}\frac{d}{dt}\theta_t=-\nabla f(\theta_t),\end{equation}
it holds for every $t\in[0,\infty)$ that
\begin{equation}\mathbf{d}(\theta_t,\mathcal{M}\cap U)\leq \exp(-\lambda t)\mathbf{d}(\theta_0,\mathcal{M}\cap U).\end{equation}
\end{prop}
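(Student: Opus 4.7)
The plan is to differentiate $t \mapsto \dd(\theta_t, \mathcal{M} \cap U)$ along the flow, combine the decomposition of $\nabla f$ from Lemma~\ref{lem_main} with the coercivity estimate of Lemma~\ref{lem_projection} to obtain an exponentially decaying Gr\"onwall inequality, and finally verify that the trajectory never escapes the tubular neighborhood on which these two lemmas apply. Fix $x_0 \in \mathcal{M} \cap U$, and take the minimum of the constants and the intersection of the neighborhoods produced by Proposition~\ref{def_projection}, Lemma~\ref{lem_main}, and Lemma~\ref{lem_projection} to obtain $R_0, \delta_0, \lambda, c \in (0, \infty)$ and an open $V_*(x_0) \subseteq U$ such that for every $R \in (0, R_0]$, $\delta \in (0, \delta_0]$, and $x \in V_{R, \delta}(x_0)$ one has $\nabla f(x) = \nabla^2 f(x_*)(x - x_*) + \varepsilon_x$ with $|\varepsilon_x| \leq c\, \dd(x, \mathcal{M} \cap U)^2$, and $(\nabla^2 f(x_*)(x - x_*)) \cdot (x - x_*) \geq \lambda\, \dd(x, \mathcal{M} \cap U)^2$. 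By shrinking $\delta_0$ I may assume $c\, \delta_0 \leq \lambda/2$, and since $\nabla f|_{\mathcal{M} \cap U} = 0$ and $f \in \C^3$, I may fix $C_1 \in (0, \infty)$ with $\|\nabla f(x)\| \leq C_1\, \dd(x, \mathcal{M} \cap U)$ on $V_{R_0, \delta_0}(x_0)$.

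Let $R \in (0, R_0]$, $\delta \in (0, \delta_0]$, and $\theta_0 \in V_{R/2, \delta}(x_0)$. If $\theta_0 \in \mathcal{M} \cap U$ the claim is trivial since $\theta_t \equiv \theta_0$, so I assume $\theta_0 \notin \mathcal{M} \cap U$, in which case uniqueness for the ODE (whose right-hand side vanishes on $\mathcal{M} \cap U$) forces $\theta_t \notin \mathcal{M} \cap U$ for every $t \geq 0$. Set $T = \sup\{\tau \geq 0 \colon \theta_s \in V_{R, \delta}(x_0) \text{ for every } s \in [0, \tau]\}$; continuity of the flow and $\theta_0 \in V_{R/2, \delta}(x_0)$ give $T > 0$. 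For $t \in [0, T)$, Lemma~\ref{distance} and the chain rule yield
\[
\frac{d}{dt} \dd(\theta_t, \mathcal{M} \cap U) = -\frac{\theta_t - (\theta_t)_*}{\|\theta_t - (\theta_t)_*\|} \cdot \nabla f(\theta_t),
\]
and plugging in the decomposition of Lemma~\ref{lem_main}, using Lemma~\ref{lem_projection} for the principal term and Cauchy--Schwarz for $\varepsilon_{\theta_t}$, gives
\[
\frac{d}{dt} \dd(\theta_t, \mathcal{M} \cap U) \leq -\lambda\, \dd(\theta_t, \mathcal{M} \cap U) + c\, \dd(\theta_t, \mathcal{M} \cap U)^2 \leq -\tfrac{\lambda}{2}\, \dd(\theta_t, \mathcal{M} \cap U),
\]
where the last step uses $\dd(\theta_t, \mathcal{M} \cap U) \leq \delta \leq \lambda/(2c)$. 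Integrating produces $\dd(\theta_t, \mathcal{M} \cap U) \leq \exp(-\lambda t/2)\, \dd(\theta_0, \mathcal{M} \cap U)$ on $[0, T)$.

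The main remaining obstacle is to rule out $T < \infty$, since the above estimate only controls the normal excursion, while tangential drift might in principle push $(\theta_t)_*$ out of $\overline{B}_R(x_0) \cap \mathcal{M} \cap U$. I exploit $\|\nabla f(\theta_s)\| \leq C_1\, \dd(\theta_s, \mathcal{M} \cap U) \leq C_1\, \delta\, e^{-\lambda s/2}$ to bound
\[
\|\theta_t - \theta_0\| \leq \int_0^t \|\nabla f(\theta_s)\|\, ds \leq \frac{2 C_1}{\lambda}\, \delta \quad \text{for every } t \in [0, T),
\]
and then the triangle inequality together with $\theta_0 \in V_{R/2, \delta}(x_0)$ gives
\[
\|(\theta_t)_* - x_0\| \leq \dd(\theta_t, \mathcal{M} \cap U) + \|\theta_t - \theta_0\| + \|\theta_0 - (\theta_0)_*\| + \|(\theta_0)_* - x_0\| \leq \tfrac{R}{2} + \delta\bigl(2 + \tfrac{2 C_1}{\lambda}\bigr).
\]
After one further shrinking of $\delta_0$ so that $\delta_0(2 + 2 C_1/\lambda) \leq R_0/2$, the right-hand side is at most $R$, so $(\theta_t)_* \in \overline{B}_R(x_0) \cap \mathcal{M} \cap U$ and $\dd(\theta_t, \mathcal{M} \cap U) < \delta$ for every $t \in [0, T)$; Proposition~\ref{cts_prop_tub} then places $\theta_t$ in $V_{R, \delta}(x_0)$ throughout, and continuity of the flow excludes $T < \infty$. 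Relabeling $\lambda/2$ as $\lambda$ yields the stated exponential decay.
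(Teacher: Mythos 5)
Your proof is correct and follows essentially the same route as the paper: the identical Gr\"onwall inequality for $t\mapsto\dd(\theta_t,\mathcal{M}\cap U)$ obtained from Lemma~\ref{distance}, Lemma~\ref{lem_main}, and Lemma~\ref{lem_projection}, followed by an $O(\delta)$ excursion bound that keeps the trajectory in the tube $V_{R,\delta}(x_0)$ for all time. The only variation is in that last step -- the paper integrates the projection dynamics $\tfrac{d}{dt}\theta_{t,*}=-DP(\theta_t)\cdot\nabla f(\theta_t)$ to control $|\theta_{t,*}-\theta_{0,*}|$, whereas you bound $|\theta_t-\theta_0|\le\int_0^t|\nabla f(\theta_s)|\,ds$ directly and transfer this to the projection by the triangle inequality, an equivalent and slightly more elementary computation -- and your explicit observation that $\theta_t$ never reaches $\mathcal{M}\cap U$ (needed for Lemma~\ref{distance} to apply) is a detail the paper leaves implicit.
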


\begin{proof}[Proof of Proposition~\ref{cts_converge}]  Let $x_0\in(\mathcal{M}\cap U)$.  Since $U\subseteq\mathbb{R}^d$ is an open set, fix $V\in\Proj(x_0)$ (cf. Definition~\ref{define_proj}) which satisfies that $V\subseteq U$.  In view or Proposition~\ref{cts_prop_tub}, fix $R_0,\delta_0\in(0,\infty)$ such that for every $R\in(0,R_0]$, $\delta\in(0,\delta_0]$ the set $V_{R,\delta}(x_0)$ (cf. Definition~\ref{dtbn}) satisfies that $\overline{V}_{R,\delta}(x_0)\subseteq V$ and that
\begin{equation}V_{R,\delta}(x_0) = \{x\in\mathbb{R}^d\colon\mathbf{d}(x,\mathcal{M}\cap U)=\mathbf{d}(x,\overline{B}_R(x_0)\cap\mathcal{M}\cap U)<\delta\}.\end{equation}
In particular, the compactness of $\overline{V}_{R_0,\delta_0}(x_0)$ and the regularity of $f$ imply that there exists $c\in(0,\infty)$ which satisfies that
\begin{equation}\label{cts_00}\norm{f}_{\C^3(V_{R_0,\delta_0}(x_0))}\leq c.\end{equation}
Let $R\in(0,R_0]$, $\delta\in(0,\delta_0]$.  Let $\theta_0\in V_{\nicefrac{R}{2},\delta}(x_0)$, let $\theta_t\in\mathbb{R}^d$, $t\in[0,\infty)$, satisfy for every $t\in(0,\infty)$ that
\begin{equation}\frac{d}{dt}\theta_t=-\nabla f(\theta_t),\end{equation}
and let $\tau\in(0,\infty)$ denote the exit time
\begin{equation}\tau =\inf \{\;t\geq 0\;|\;\theta_t\notin V_{R,\delta}(x_0)\;\}.\end{equation}
Lemma~\ref{distance} and the chain rule prove that
\begin{equation}\label{cts_1}\left\{\begin{aligned}  \frac{d}{dt}\mathbf{d}(\theta_t,\mathcal{M}\cap U)  & =-\nabla f(\theta_t)\cdot\nabla \mathbf{d}(\theta_t,\mathcal{M}\cap U) =-\nabla f(\theta_t)\cdot \frac{\theta_t-p(\theta_t)}{\abs{\theta_t-p(\theta_t)}} & \textrm{in}\;\;(0,\tau), \\  \frac{d}{dt}p(\theta_t)  & = -Dp(\theta_t)\cdot \nabla f(\theta_t) & \textrm{in}\;\;(0,\tau), \end{aligned}\right.\end{equation}
where the local regularity of $f$ and the stopping time $\tau$ guarantee the well-posedness of this equation.  Let $t\in(0,\tau)$.  It follows from Lemma~\ref{lem_main} and Lemma~\ref{lem_projection} that there exist $\lambda,c_1\in(0,\infty)$ which satisfy that
\begin{equation}\label{cts_0001}\nabla f(\theta_t)\cdot \frac{\theta_t-p(\theta_t)}{\abs{\theta_t-p(\theta_t)}}\geq \lambda\mathbf{d}(\theta_t,\mathcal{M}\cap U)-c_1\mathbf{d}(\theta_t,\mathcal{M}\cap U)^2.\end{equation}
Proposition~\ref{def_projection}, \eqref{cts_00}, and $\left.\nabla f\right|_{\mathcal{M}\cap U}=0$ prove that there exists $c_2\in(0,\infty)$ which satisfies that
\begin{equation}\label{cts_001}\abs{Dp(\theta_t)\cdot \nabla f(\theta_t)}\leq c_2\mathbf{d}(\theta_t,\mathcal{M}\cap U).\end{equation}
Returning to \eqref{cts_1}, it follows from \eqref{cts_0001} and \eqref{cts_001} that
\begin{equation}\label{cts_5}\left\{\begin{aligned}  \frac{d}{dt}\mathbf{d}(\theta_t,\mathcal{M}\cap U) & \leq -\lambda \mathbf{d}(\theta_t,\mathcal{M}\cap U)+c_1\mathbf{d}(\theta_t,\mathcal{M}\cap U)^2 & \textrm{in}\;\;(0,\tau), \\  \abs{\frac{d}{dt}p(\theta_t)}  & \leq c_2 \mathbf{d}(\theta_t,\mathcal{M}\cap U) & \textrm{in}\;\;(0,\tau).  \end{aligned}\right.\end{equation}
Let $\delta_1\in(0,\delta_0]$ satisfy that
\begin{equation}\label{cts_6}c_1\delta_1\leq \lambda/2.\end{equation}
Let $\delta\in(0,\delta_1]$.  For every $t\in(0,\tau)$ it follows from \eqref{cts_5} and \eqref{cts_6} that
\begin{equation}\frac{d}{dt}\mathbf{d}(\theta_t,\mathcal{M}\cap U)\leq -\frac{\lambda}{2}\mathbf{d}(\theta_t,\mathcal{M}\cap U).\end{equation}
Therefore, for every $\delta\in(0,\delta_1]$, $t\in[0,\tau)$ it holds that
\begin{equation}\label{cts_7}\mathbf{d}(\theta_t,\mathcal{M}\cap U)\leq \mathbf{d}(\theta_0,\mathcal{M}\cap U)\exp(-\lambda t/2)\leq \delta_1\exp(-\lambda t/2).\end{equation}
For every $t\in[0,\tau)$, it follows from \eqref{cts_5} and \eqref{cts_7} that
\begin{equation}\label{cts_8}\max_{0\leq t\leq \tau}\abs{p(\theta_t)-p(\theta_0)}\leq c_2\int_0^{\tau}\delta_1\exp\left(-\frac{\lambda t}{2}\right)\dt = \frac{2c_2\delta_1}{\lambda}\left(1-\exp\left(-\frac{\lambda \tau}{2}\right)\right)\leq \frac{2c_2\delta_1}{\lambda}.\end{equation}
Fix $\delta_2\in(0,\delta_1]$ which satisfies that
\begin{equation}\frac{2c_2\delta_2}{\lambda}<\frac{R}{2}.\end{equation}
Let $\delta\in(0,\delta_2]$.  In combination \eqref{cts_7}, \eqref{cts_8}, $\theta_0\in V_{\nicefrac{R}{2},\delta}(x_0)$, and the triangle inequality prove that $\theta_t\in V_{R,\delta}(x_0)$ for every $t\in(0,\infty)$.  This is to say that $\tau=\infty$.  Since $\theta_0\in V_{\nicefrac{R}{2},\delta}(x_0)$ was arbitrary, this completes the proof of Proposition~\ref{cts_converge}.  \end{proof}

\section{Discrete deterministic gradient descent}\label{section_det_dis}

In this section, for an objective function $f\colon \mathbb{R}^d\rightarrow\mathbb{R}$ which satisfies the conditions of Theorem~\ref{intro_ng_one_path}, we will analyze the convergence of the following deterministic gradient descent algorithm $\theta_n\in\mathbb{R}^d$, $n\in\mathbb{N}_0$, in discrete time which satisfies for a learning rate $\rho\in(0,1)$ and $r\in(0,\infty)$ that
\begin{equation}\label{dis_sgd} \theta_n=\theta_{n-1}-\frac{r}{n^\rho}\nabla f(\theta_{n-1}).\end{equation}

The proof is similar to the case of the deterministic gradient descent algorithm in continuous time.  However, in the discrete setting, care must be taken to choose the learning rate $r\in(0,\infty)$ sufficiently small.  Since, if the learning rate is too large, for small values of $n$ the jump $-\frac{r}{n^\rho}\nabla f$ may be an overcorrection that causes the solution to overshoot the local manifold of minima and to leave the basin of attraction.

In the proof, we first identify a basin of attraction using Proposition~\ref{def_projection} and Proposition~\ref{cts_prop_tub}.  In the second step, we prove that the solution \eqref{dis_sgd} converges along the normal directions to the manifold of local minima provided the solution remains in the basin of attraction.  For this, we use the normal component of $\nabla f$ from Lemma~\ref{lem_main} and the quantification of the convergence from Lemma~\ref{lem_projection}.  Finally, after fixing a perhaps smaller basin of attraction, we prove that the tangential component of the gradient from Lemma~\ref{lem_main} does not cause the solution \eqref{dis_sgd} to leave the basin of attraction.

\begin{prop}\label{dis_converge}  Let $d\in\mathbb{N}$, $\mathfrak{d}\in\{1,2,\ldots,d-1\}$, $\rho\in(0,1)$, let $\abs{\cdot}\colon\mathbb{R}^d\rightarrow\mathbb{R}$ be the standard norm on $\mathbb{R}^d$, let $U\subseteq\mathbb{R}^d$ be an open set, let $f\colon U\rightarrow\mathbb{R}$ be a three times continuously differentiable function, let $\mathcal{M}\subseteq\mathbb{R}^d$ satisfy that
\begin{equation}\mathcal{M}=\big\{\theta\in\mathbb{R}^d\colon [f(\theta)=\inf\nolimits_{\vartheta\in\mathbb{R}^d} f(\vartheta)]\big\},\end{equation}
let $\mathbf{d}(\cdot,\mathcal{M}\cap U):\mathbb{R}^d\rightarrow\mathbb{R}$ be the function which satisfies for every $x\in\mathbb{R}^d$ that
\begin{equation} \mathbf{d}(x,\mathcal{M}\cap U)=\inf \left\{\abs{x-y}\colon y\in(\mathcal{M}\cap U)\right\},\end{equation}
assume that $\mathcal{M}\cap U$ is a non-empty $\mathfrak{d}$-dimensional $\C^1$-submanifold of $\mathbb{R}^d$, and assume for every $\theta\in(\mathcal{M}\cap U)$ that $\rank((\Hess f)(\theta))=d-\mathfrak{d}$.  Then for every $x_0\in(\mathcal{M}\cap U)$ there exists $R_0,\delta_0,\mathfrak{r},c\in(0,\infty)$ such that for every $R\in(0,R_0]$, $\delta\in(0,\delta_0]$, $r\in(0,\mathfrak{r}]$, $\theta_0\in V_{\nicefrac{R}{2},\delta}(x_0)$ (cf. Definition~\ref{dtbn}), for $\theta_n\in\mathbb{R}^d$, $n\in\mathbb{N}_0$, which satisfies for every $n\in\mathbb{N}$ that
\begin{equation}\theta_n=\theta_{n-1}-\frac{r}{n^\rho}\nabla f(\theta_{n-1}),\end{equation}
it holds for every $n\in\mathbb{N}_0$ that
\begin{equation}\mathbf{d}(\theta_n,\mathcal{M}\cap U)\leq \exp(-c n^{1-\rho})\mathbf{d}(x_0,\mathcal{M}\cap U).\end{equation}
\end{prop}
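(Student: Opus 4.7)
Fix $x_0\in\mathcal{M}\cap U$ and apply Proposition~\ref{def_projection}, Proposition~\ref{cts_prop_tub}, Lemma~\ref{lem_main}, and Lemma~\ref{lem_projection} to produce constants $R_0,\delta_0,\mathfrak{r}\in(0,\infty)$, $\lambda\in(0,\infty)$, $c_1\in(0,\infty)$, and an open neighborhood $V_*(x_0)\subseteq U$ with the following properties: the projection map $P\colon V_*(x_0)\to\mathcal{M}\cap U$, $y\mapsto y_*$, is $\C^1$-smooth, hence Lipschitz with some constant $L\in(0,\infty)$ on the compact set $\overline{V}_{R_0,\delta_0}(x_0)$; on $V_{R_0,\delta_0}(x_0)$ the decomposition $\nabla f(x)=\nabla^2 f(x_*)(x-x_*)+\varepsilon_x$ holds with $\abs{\varepsilon_x}\leq c_1\dd(x,\mathcal{M}\cap U)^2$; the $\C^3$-regularity of $f$ together with $\nabla f|_{\mathcal{M}\cap U}=0$ yields $\abs{\nabla f(x)}\leq c_2\dd(x,\mathcal{M}\cap U)$; and for every step size $s\in(0,\mathfrak{r}]$ we have $\abs{(I-s\nabla^2 f(x_*))(x-x_*)}\leq (1-\lambda s)\dd(x,\mathcal{M}\cap U)$. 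Because $\rho\in(0,1)$ and $r\in(0,\mathfrak{r}]$, every effective step size $r n^{-\rho}$ lies in $(0,\mathfrak{r}]$, so these bounds are uniformly usable.

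Under the inductive hypothesis $\theta_{n-1}\in V_{R,\delta}(x_0)$, I would write
\begin{equation}
\theta_n-\theta_{n-1,*}=\Bigl(I-\frac{r}{n^\rho}\nabla^2 f(\theta_{n-1,*})\Bigr)(\theta_{n-1}-\theta_{n-1,*})-\frac{r}{n^\rho}\varepsilon_{\theta_{n-1}},
\end{equation}
and combine Lemma~\ref{lem_projection} with the quadratic bound on $\varepsilon_{\theta_{n-1}}$ to conclude
\begin{equation}
\dd(\theta_n,\mathcal{M}\cap U)\leq\abs{\theta_n-\theta_{n-1,*}}\leq\Bigl(1-\frac{\lambda r}{n^\rho}\Bigr)\dd(\theta_{n-1},\mathcal{M}\cap U)+\frac{c_1 r}{n^\rho}\dd(\theta_{n-1},\mathcal{M}\cap U)^2.
\end{equation}
Choosing $\delta_1\in(0,\delta_0]$ with $c_1\delta_1\leq\lambda/2$ and restricting $\delta\in(0,\delta_1]$ absorbs the quadratic term and gives the single-step contraction
\begin{equation}
\dd(\theta_n,\mathcal{M}\cap U)\leq\Bigl(1-\frac{\lambda r}{2 n^\rho}\Bigr)\dd(\theta_{n-1},\mathcal{M}\cap U).
\end{equation}
Iterating, using $\log(1-x)\leq-x$ for $x\in[0,1)$, and invoking the elementary bound $\sum_{k=1}^{n}k^{-\rho}\geq\kappa (1-\rho)^{-1} n^{1-\rho}$ (valid for some $\kappa\in(0,\infty)$ and every $n\in\mathbb{N}$) then yields
\begin{equation}
\dd(\theta_n,\mathcal{M}\cap U)\leq\exp(-c_3 n^{1-\rho})\dd(\theta_0,\mathcal{M}\cap U),
\end{equation}
with $c_3:=\kappa\lambda r/(2(1-\rho))$, which is the bound in the statement; in particular $\dd(\theta_n,\mathcal{M}\cap U)\leq\delta$ for every $n\in\mathbb{N}_0$.

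The remaining and principal task is to close the induction by controlling the tangential drift of the projections, i.e.\ to prove $\theta_{n,*}\in\overline{B}_R(x_0)\cap\mathcal{M}\cap U$. Combining the Lipschitz bound for $P$ with $\abs{\nabla f(\theta_{n-1})}\leq c_2\dd(\theta_{n-1},\mathcal{M}\cap U)$ gives
\begin{equation}
\abs{\theta_{n,*}-\theta_{n-1,*}}\leq L\abs{\theta_n-\theta_{n-1}}=L\frac{r}{n^\rho}\abs{\nabla f(\theta_{n-1})}\leq L c_2\frac{r}{n^\rho}\dd(\theta_{n-1},\mathcal{M}\cap U),
\end{equation}
and telescoping together with the exponential decay from the previous paragraph yields
\begin{equation}
\abs{\theta_{n,*}-\theta_{0,*}}\leq L c_2 r\dd(\theta_0,\mathcal{M}\cap U)\sum_{k=1}^{\infty}\frac{\exp(-c_3(k-1)^{1-\rho})}{k^\rho}.
\end{equation}
The integral comparison with $\int_1^{\infty}x^{-\rho}\exp(-c_3 x^{1-\rho})\dx=(c_3(1-\rho))^{-1}\exp(-c_3)$, obtained via the substitution $u=x^{1-\rho}$, shows the series is bounded by a constant $S\in(0,\infty)$ that is independent of $r$ and $n$. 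Consequently, after shrinking $\mathfrak{r}$ and $\delta_0$ so that $L c_2\mathfrak{r}\delta_0 S<R/2$, the assumption $\theta_0\in V_{R/2,\delta}(x_0)$ forces $\theta_{0,*}\in\overline{B}_{R/2}(x_0)$ and hence $\theta_{n,*}\in\overline{B}_R(x_0)\cap\mathcal{M}\cap U$, which together with the normal bound places $\theta_n$ inside $V_{R,\delta}(x_0)$ and closes the induction. The delicate simultaneous choice of $R_0,\delta_0,\delta_1,\mathfrak{r}$ to validate the step-size assumption of Lemma~\ref{lem_projection}, absorb the quadratic error of Lemma~\ref{lem_main}, and confine the tangential drift within $\overline{B}_R(x_0)$ is the main obstacle.
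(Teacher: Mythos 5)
Your proof is correct and follows the paper's argument almost verbatim for its core: the same decomposition of $\nabla f$ via Lemma~\ref{lem_main}, the same contraction from Lemma~\ref{lem_projection}, the same choice of $\delta_1$ with $c_1\delta_1\leq\lambda/2$ to absorb the quadratic error, and the same iteration and summation $\sum_k k^{-\rho}\gtrsim n^{1-\rho}$ yielding the rate $\exp(-cn^{1-\rho})$. The only place you diverge is in closing the induction: you control the tangential drift of the \emph{projections}, $\abs{\theta_{n,*}-\theta_{0,*}}$, via the Lipschitz constant of the projection map $P$, which mirrors the paper's continuous-time argument in Proposition~\ref{cts_converge}. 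The paper's discrete proof instead bounds the raw displacement $\abs{\theta_n-\theta_0}\leq\sum_k\abs{\theta_k-\theta_{k-1}}\leq c\,\dd(\theta_0,\mathcal{M}\cap U)\sum_k k^{-\rho}\exp(-c_2k^{1-\rho})$ and then invokes the distance characterization of $V_{R,\delta}(x_0)$ from Proposition~\ref{cts_prop_tub}. The paper's variant has one small advantage you should be aware of: writing $\theta_{n,*}$ and applying the Lipschitz bound for $P$ on the segment from $\theta_{n-1}$ to $\theta_n$ presupposes that $\theta_n$ already lies in a region where the projection is defined, which is part of what the induction is trying to establish; you would need to add a remark that the step $\abs{\theta_n-\theta_{n-1}}\leq c r n^{-\rho}\delta$ is small enough (after shrinking $\mathfrak{r}$ or $\delta_0$) to keep $\theta_n$ inside the open set $V_*(x_0)\supseteq\overline{V}_{R,\delta}(x_0)$ before its projection can be invoked. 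Bounding $\abs{\theta_n-\theta_0}$ directly, as the paper does, sidesteps this entirely. With that one-line patch your argument is complete.
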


\begin{proof}[Proof of Proposition~\ref{dis_converge}] Let $x_0\in(\mathcal{M}\cap U)$ and $\rho\in(0,1)$.  Since $U\subseteq\mathbb{R}^d$ is open, fix $V\in\Proj(x_0)$ (cf. Definition~\ref{define_proj}) which satisfies that $V\subseteq U$.  In view or Proposition~\ref{cts_prop_tub}, fix $R_0,\delta_0\in(0,\infty)$ such that for every $R\in(0,R_0]$, $\delta\in(0,\delta_0]$ the set $V_{R,\delta}(x_0)$ (cf. Definition~\ref{dtbn}) satisfies that $\overline{V}_{R,\delta}(x_0)\subseteq V$ and that
\begin{equation}V_{R,\delta}(x_0) = \{x\in\mathbb{R}^d\colon\mathbf{d}(x,\mathcal{M}\cap U)=\mathbf{d}(x,\overline{B}_R(x_0)\cap\mathcal{M}\cap U)<\delta\}.\end{equation}
The regularity of $f$ and the compactness of $\overline{V}_{R_0,\delta_0}(x_0)$ prove that there exists $c\in(0,\infty)$ which satisfies that
\begin{equation}\label{dis_000}\norm{f}_{\C^3(V_{R_0,\delta_0}(x_0))}\leq c.\end{equation}
Fix $\mathfrak{r}\in(0,\infty)$ which satisfies the conclusion of Lemma~\ref{lem_projection} for the set $V_{R_0,\delta_0}(x_0)$.  Let $R\in(0,R_0]$, $\delta\in(0,\delta_0]$, $r\in(0,\mathfrak{r}]$.  Let $\theta_0\in V_{\nicefrac{R}{2},\delta}(x_0)$, let $\theta_n\in\mathbb{R}^d$, $n\in\mathbb{N}$, satisfy that
\begin{equation}\theta_n=\theta_{n-1}-\frac{ r}{n^\rho}\nabla f(\theta_{n-1}),\end{equation}
and let $\tau\in\mathbb{N}$ be the exit time which satisfies that
\begin{equation}\tau=\inf \{\;n\in\mathbb{N}\;|\;\theta_n\notin V_{R,\delta}(x_0)\;\}.\end{equation}
Since for every $n\in\{1,\ldots,\tau\}$ the projection of $\theta_{n-1}$ is well-defined, we have that
\begin{equation}\label{dis_1}\mathbf{d}(\theta_n,\mathcal{M}\cap U) \leq  \abs{\theta_n-p(\theta_{n-1})} = \abs{\theta_{n-1}-p(\theta_{n-1})-\frac{ r}{n^\rho}\nabla f(\theta_{n-1})}.\end{equation}
Lemma~\ref{lem_main} proves that there exists $c\in(0,\infty)$ such that for every $n\in\{1,\ldots,\tau\}$ there exists $\varepsilon_n\in\mathbb{R}^d$ which satisfies that
\begin{equation}\label{dis_01}\abs{\varepsilon_n}\leq c\mathbf{d}(\theta_{n-1},\mathcal{M}\cap U)^2,\end{equation}
such that
\begin{equation}\label{dis_2}\nabla f(\theta_{n-1})=\big(\Hess f\big)(p(\theta_{n-1}))\cdot(x-p(x))+\varepsilon_n.\end{equation}
The triangle inequality, \eqref{dis_1}, \eqref{dis_01}, and \eqref{dis_2} prove that there exists $c_1\in(0,\infty)$ such that for every $n\in\{1,\ldots,\tau\}$ it holds that
\begin{equation}\label{dis_3}\begin{aligned} \mathbf{d}(\theta_n,\mathcal{M}\cap U) & \leq\abs{\theta_{n-1}-p(\theta_{n-1})-\frac{ r}{n^\rho}\big(\Hess f\big)(p(\theta_{n-1}))\cdot(\theta_{n-1}-p(\theta_{n-1}))} \\ & \quad +\frac{c_1 r}{n^\rho}\mathbf{d}(\theta_{n-1},\mathcal{M}\cap U)^2.\end{aligned}\end{equation}
Finally, the choice of $ \mathfrak{r}\in(0,\infty)$, Lemma~\ref{lem_projection}, and \eqref{dis_3} prove that there exists $\lambda\in(0,\infty)$ such that for every $n\in\{1,\ldots,\tau\}$ it holds that
\begin{equation}\label{dis_4}\mathbf{d}(\theta_n,\mathcal{M}\cap U) \leq\left(1-\frac{ r\lambda}{n^\rho}\right)\mathbf{d}(\theta_{n-1},\mathcal{M}\cap U)+\frac{c_1 r}{n^\rho}\mathbf{d}(\theta_{n-1},\mathcal{M}\cap U)^2,\end{equation}
where the choice of $ \mathfrak{r}\in(0,\infty)$ guarantees that $(1- r\lambda)\geq 0$.  Fix $\delta_1\in(0,\delta_0]$ which satisfies that
\begin{equation}\label{dis_16}c_1\delta_1\leq \frac{\lambda}{2}.\end{equation}
Let $\delta\in(0,\delta_1]$.  It follows from \eqref{dis_4} and \eqref{dis_16} that for every $n\in\{1,\ldots,\tau\}$ it holds that
\begin{equation}\mathbf{d}(\theta_n,\mathcal{M}\cap U)\leq \left(1-\frac{ r\lambda}{2 n^\rho}\right)\mathbf{d}(\theta_{n-1},\mathcal{M}\cap U).\end{equation}
After iterating this inequality, we have for every $n\in\{1,\ldots,\tau\}$ that
\begin{equation}\label{dis_5} \mathbf{d}(\theta_n,\mathcal{M}\cap U)\leq \prod_{k=1}^n \left(1-\frac{ r\lambda}{2 k^\rho}\right)\mathbf{d}(\theta_0,\mathcal{M}\cap U).\end{equation}
Since there exists $c\in(0,\infty)$ which satisfies for every $n\in\mathbb{N}$ that
\begin{equation}\label{dis_05}\log\left(\prod_{k=1}^n\left(1-\frac{ r\lambda}{2 k^\rho}\right)\right)=\sum_{k=1}^n\log\left(1-\frac{ r\lambda}{2 k^\rho}\right)\leq -c\sum_{k=1}^n\frac{ r\lambda}{2 k^\rho}\leq -c\frac{ r\lambda}{2}n^{1-\rho},\end{equation}
it follows from \eqref{dis_5} that there exists $c_2\in(0,\infty)$ which satisfies for every $n\in\{1,\ldots,\tau\}$ that
\begin{equation}\label{dis_6} \mathbf{d}(\theta_n,\mathcal{M}\cap U)\leq \exp\left(-c_2n^{1-\rho}\right)\mathbf{d}(\theta_0,\mathcal{M}\cap U).\end{equation}
It remains only to show that, provided $\delta\in(0,\delta_1]$ is chosen sufficiently small, we have that $\tau=\infty$.  It follows from \eqref{dis_000}, \eqref{dis_6}, and $\left.\nabla f\right|_{\mathcal{M}\cap U}=0$ that there exists $c\in(0,\infty)$ which satisfies that
\begin{equation}\abs{\theta_n-\theta_{n-1}}=\frac{ r}{n^\rho}\abs{\nabla f(\theta_{n-1})}\leq \frac{c}{n^\rho}\mathbf{d}(\theta_{n-1},\mathcal{M}\cap U)\leq cn^{-\rho}\exp\left(-c_2n^{1-\rho}\right)\mathbf{d}(\theta_0,\mathcal{M}\cap U).\end{equation}
The triangle inequality therefore implies that there exists $c_3\in(0,\infty)$ such that for every $n\in\{1,\ldots,\tau\}$ it holds that
\begin{equation}\label{dis_8}\abs{\theta_n-\theta_0}\leq c\mathbf{d}(\theta_0,\mathcal{M}\cap U)\sum_{k=1}^\infty ck^{-\rho}\exp\left(-c_2k^{1-\rho}\right)= c_3\mathbf{d}(\theta_0,\mathcal{M}\cap U)<\infty.\end{equation}
Fix $\delta_2\in(0,\delta_1]$ which satisfies that
\begin{equation}c_3\delta_2< \frac{R}{2}-2\delta_2.\end{equation}
Let $\delta\in(0,\delta_2]$.  The choice of $\delta_2\in(0,\delta_1]$, \eqref{dis_8}, and the triangle inequality prove for every $n\in\{1,\ldots,\tau\}$ that
\begin{equation}\label{dis_10}\abs{\theta_n-x_0}\leq \abs{\theta_n-\theta_0}+\abs{\theta_0-x_0}<c_3\delta_2+\frac{R}{2}+\delta_2<R-\delta_2.\end{equation}
In combination \eqref{dis_6} and \eqref{dis_10} prove for every $n\in\{1,\ldots,\tau\}$ that
\begin{equation}\mathbf{d}(\theta_n,\mathcal{M}\cap U)< \delta_2\;\;\textrm{and}\;\;\abs{\theta_n-x_0}\leq R-\delta_2.\end{equation}
The triangle inequality therefore implies for every $n\in\{1,\ldots,\tau\}$ that
\begin{equation}\mathbf{d}(\theta_n,\mathcal{M}\cap U)=\mathbf{d}(\theta_n,\overline{B}_R(x_0)\cap\mathcal{M}\cap U).\end{equation}
It follows from Proposition~\ref{cts_prop_tub}, the choice of $R_0,\delta_0\in(0,\infty)$, and $\theta_0\in V_{\nicefrac{R}{2},\delta}(x_0)$ that for every $n\in\mathbb{N}$ it holds that $\theta_n\in V_{R,\delta}(x_0)$.  This is to say that $\tau=\infty$, which completes the proof of Proposition~\ref{dis_converge}.  \end{proof}

\begin{remark}  The conclusion of Proposition~\ref{dis_converge} can be extended to the case of $\rho=1$ using the same techniques.  In this case, in the setting of Proposition~\ref{dis_converge}, there exists $R_0,\delta_0,\mathfrak{r},c\in(0,\infty)$ such that for every $R\in(0,R_0]$, $\delta\in(0,\delta_0]$, $r\in(0,\mathfrak{r}]$, $\theta_0\in V_{\nicefrac{R}{2},\delta}(x_0)$ (cf. Definition~\ref{dtbn}), for $\theta_n\in\mathbb{R}^d$, $n\in\mathbb{N}_0$, which satisfies for every $n\in\mathbb{N}$ that
\begin{equation}\theta_n=\theta_{n-1}-\frac{r}{n}\nabla f(\theta_{n-1}),\end{equation}
it holds for every $n\in\mathbb{N}_0$ that
\begin{equation}\mathbf{d}(\theta_n,\mathcal{M}\cap U)\leq \exp(-c \log(n))\mathbf{d}(x_0,\mathcal{M}\cap U).\end{equation}
The logarithm appears in estimate \eqref{dis_05} in the case $\rho=1$.  The remainder of the proof is then the same, where the only additional observation is that the analogue of \eqref{dis_8} is finite in the case $\rho=1$ as well.
\end{remark}

\section{Stochastic gradient descent}\label{sec_nonglobal}

In this section, in the setting of Theorem~\ref{intro_ng_one_path}, for a learning rate $\rho\in(\nicefrac{2}{3},1)$, for $r\in(0,\infty)$, $M\in\mathbb{N}$, for a bounded open subset $A\subseteq\mathbb{R}^d$, for a probability space $(\Omega,\mathcal{F},\mathbb{P})$, for a measurable space $(S,\mathcal{S})$, for a jointly measurable function $F\colon S\times\Omega\rightarrow\mathbb{R}$, for $X_{n,m}\colon\Omega\rightarrow\mathbb{R}^d$, $n,m\in\mathbb{N}$, i.i.d.\ random variables, we will analyze the convergence of the mini-batch stochastic gradient descent algorithm $\Theta_n\colon\Omega\rightarrow\mathbb{R}^d$, $n\in\mathbb{N}_0$, which satisfies that $\Theta_0$ is continuous uniformly distributed on $A$ and for every $n\in\mathbb{N}$ that
\begin{equation}\label{ng_sgd} \Theta_n=\Theta_{n-1}-\frac{r}{Mn^\rho}\sum_{m=1}^M\nabla_\theta F(\Theta_{n-1},X_{n,m}).\end{equation}®
The role of the mini-batch size $M\in\mathbb{N}$ is to reduce the variance of the random gradient
\begin{equation}\frac{1}{M}\sum_{m=1}^M\nabla_\theta F(\Theta_{n-1},X_{n,m}). \end{equation}
The variance reduction is quantified by the following well-known lemma, where the function $G$ plays the role of $\nabla_\theta F$.

\begin{lem}\label{variance_lem}  Let $d_1,d_2\in\mathbb{N}$, let $\abs{\cdot}\colon\mathbb{R}^{d_2}\rightarrow\mathbb{R}$ be the standard norm on $\mathbb{R}^{d_2}$, let $U\subseteq\mathbb{R}^{d_1}$ be a non-empty open set, let $(\Omega,\mathcal{F},\mathbb{P})$ be a probability space, let $(S,\mathcal{S})$ be a measurable space, let $G=(G(\theta,x))_{(\theta,x)\in\mathbb{R}^{d_1}\times S}\colon  \mathbb{R}^{d_1}\times S\rightarrow\mathbb{R}^{d_2}$ be a measurable function, let $X_m\colon  \Omega\rightarrow S$, $m\in\mathbb{N}$, be i.i.d.\ random variables, and assume for every non-empty compact set $\mathfrak{C}\subseteq U$ that $\sup\nolimits_{\theta\in \mathfrak{C}}\mathbb{E}\big[|G(\theta,X_1)|^2\big]<\infty$.  Then for every non-empty compact set $\mathfrak{C}\subseteq U$ there exists $c\in(0,\infty)$ which satisfies for every $M\in\mathbb{N}$ that
\begin{equation}\sup_{\theta\in \mathfrak{C}}\left(\mathbb{E}\left[\abs{\left[\frac{1}{M}\sum_{m=1}^M G(\theta,X_m)\right]-\mathbb{E}\big[G(\theta,X_1)\big]}^2\right]\right)\leq \frac{c}{M}.\end{equation}
\end{lem}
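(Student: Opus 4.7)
The plan is to reduce the statement to the standard variance computation for an i.i.d. average of centered random vectors, and then to use the square-integrability hypothesis to bound the supremum over the compact set $\mathfrak{C}$.

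First I would fix an arbitrary $\theta\in\mathfrak{C}$ and, for every $m\in\mathbb{N}$, introduce the centered random vector $Y_m^\theta=G(\theta,X_m)-\mathbb{E}[G(\theta,X_1)]\in\mathbb{R}^{d_2}$. Since the $X_m$ are i.i.d.\ and $\mathbb{E}[|G(\theta,X_1)|^2]<\infty$, the $Y_m^\theta$ are i.i.d., have finite second moment, and satisfy $\mathbb{E}[Y_m^\theta]=0$. Expanding the squared Euclidean norm componentwise and using bilinearity of expectation yields
\begin{equation}
\mathbb{E}\Bigl[\Bigl|\frac{1}{M}\sum_{m=1}^M G(\theta,X_m)-\mathbb{E}[G(\theta,X_1)]\Bigr|^2\Bigr]
=\frac{1}{M^2}\sum_{m=1}^M\sum_{n=1}^M \mathbb{E}\bigl[Y_m^\theta\cdot Y_n^\theta\bigr].
\end{equation}
For $m\neq n$, independence of $Y_m^\theta$ and $Y_n^\theta$ combined with $\mathbb{E}[Y_m^\theta]=0$ implies $\mathbb{E}[Y_m^\theta\cdot Y_n^\theta]=0$, while for $m=n$ the diagonal terms equal $\mathbb{E}[|Y_1^\theta|^2]$, so the right-hand side collapses to $\frac{1}{M}\mathbb{E}[|Y_1^\theta|^2]$.

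Next I would control $\mathbb{E}[|Y_1^\theta|^2]$ uniformly in $\theta\in\mathfrak{C}$. By Jensen's inequality (applied to the convex function $|\cdot|^2$) it holds that $|\mathbb{E}[G(\theta,X_1)]|^2\leq \mathbb{E}[|G(\theta,X_1)|^2]$, and hence the elementary inequality $|a-b|^2\leq 2|a|^2+2|b|^2$ gives
\begin{equation}
\mathbb{E}[|Y_1^\theta|^2]\leq 2\mathbb{E}[|G(\theta,X_1)|^2]+2|\mathbb{E}[G(\theta,X_1)]|^2\leq 4\mathbb{E}[|G(\theta,X_1)|^2].
\end{equation}
Defining $c=4\sup_{\theta\in\mathfrak{C}}\mathbb{E}[|G(\theta,X_1)|^2]$, which is finite by hypothesis since $\mathfrak{C}\subseteq U$ is compact, and taking the supremum over $\theta\in\mathfrak{C}$ yields the desired bound $c/M$.

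There is no serious obstacle here: the argument is the classical variance identity for sums of independent random vectors, and the only nontrivial input is the hypothesized local uniform $L^2$-bound $\sup_{\theta\in\mathfrak{C}}\mathbb{E}[|G(\theta,X_1)|^2]<\infty$, which is used precisely to pass from the pointwise estimate to the uniform one. The only mild care needed is to treat $G$ as $\mathbb{R}^{d_2}$-valued rather than scalar, but this is handled transparently by expanding the norm as a sum over coordinates (or, equivalently, by computing $\mathbb{E}[Y_m^\theta\cdot Y_n^\theta]$ with the Euclidean inner product).
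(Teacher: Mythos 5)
Your proof is correct and follows essentially the same route as the paper's: expand the squared norm into a double sum, use independence and centering to kill the off-diagonal terms, and bound the remaining diagonal variance uniformly over $\mathfrak{C}$ by the hypothesized local $L^2$-bound. The only difference is that you make explicit the elementary estimate $\mathbb{E}[|Y_1^\theta|^2]\leq 4\,\mathbb{E}[|G(\theta,X_1)|^2]$ via Jensen, which the paper leaves implicit.
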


\begin{proof}[Proof of Lemma~\ref{variance_lem}]  Let $\mathfrak{C}\subseteq U$ be a compact set.  It holds for every $\theta\in \mathfrak{C}$, $M\in\mathbb{N}$ that
\begin{equation}\begin{aligned} & \mathbb{E}\Big[\Big|\frac{1}{M}\sum_{m=1}^M G(\theta,X_m)-\mathbb{E}\big[G(\theta,X_1)\big]\Big|^2\Big] \\ & =\frac{1}{M^2}\sum_{i,j=1}^M\mathbb{E}\Big[\big(G(\theta,X_i)-\mathbb{E}\big[G(\theta,X_1)\big]\big)\big(G(\theta,X_j)-\mathbb{E}\big[G(\theta,X_1)\big]\big)\Big].\end{aligned}\end{equation}
Since the $X_m$, $m\in\mathbb{N}$, are i.i.d.\ and since $G(\theta,X_{1,1})$, $\theta\in\mathbb{R}^{d_1}$, is locally bounded in $L^2(\Omega;\mathbb{R}^{d_2})$, there exists $c\in(0,\infty)$ which satisfies for every $M\in\mathbb{N}$ that
\begin{equation}\begin{aligned} \sup_{\theta\in \mathfrak{C}}\Big(\mathbb{E}\Big[\Big|\frac{1}{M}\sum_{m=1}^M G(\theta,X_m)-\mathbb{E}\big[G(\theta,X_1)\big]\Big|^2\Big]\Big) & =\sup_{\theta\in \mathfrak{C}}\Big(\frac{1}{M^2}\sum_{m=1}^M\mathbb{E}\Big[\Big|G(\theta,X_m)-\mathbb{E}\big[G(\theta,X_1)\big]\Big|^2\Big]\Big) \\ & = \frac{1}{M}\sup_{\theta\in \mathfrak{C}}\Big(\mathbb{E}\Big[\Big|G(\theta,X_1)-\mathbb{E}\big[G(\theta,X_1)\big]\Big|^2\Big]\Big) \\ & \leq \frac{c}{M}.\end{aligned}\end{equation}
This completes the proof of Lemma~\ref{variance_lem}.  \end{proof}

In the following proposition, much like the first step of the proofs of Proposition~\ref{cts_converge} and Proposition~\ref{dis_converge}, we establish the convergence of \eqref{ng_sgd} in directions normal to the local manifold of minima.  We first identify a basin of attraction for \eqref{ng_sgd} using Proposition~\ref{def_projection} and Proposition~\ref{cts_prop_tub} and prove, using the gradient decomposition of Lemma~\ref{lem_main} and the quantification of convergence from Lemma~\ref{lem_projection}, that on the event that SGD does not escape this basin of attraction SGD converges to the manifold of minima in expectation.

\begin{remark}We emphasize that the events $A_n$, $n\in\mathbb{N}_0$, defined in Proposition~\ref{ng_converge} below depend upon the quantifiers $n,M\in\mathbb{N}$, $r,R,\delta\in(0,\infty)$, $\theta\in\mathbb{R}^d$, and $x_0\in(\mathcal{M}\cap U)$.  However, in order to simplify the presentation, we will oftentimes suppress this dependence in the notation.  For every $n\in\mathbb{N}$, we will write $\mathbf{1}_{A_n}\colon\Omega\rightarrow\{0,1\}$ for the indicator function of the set $A_n\subseteq\Omega$.
\end{remark}

\begin{prop}\label{ng_converge}  Let $d\in\mathbb{N}$, $\mathfrak{d}\in\{ 0, 1, \ldots, d - 1 \}$, $\rho\in(\nicefrac{2}{3},1)$, let $\abs{\cdot}\colon\mathbb{R}^d\rightarrow\mathbb{R}$ be the standard norm on $\mathbb{R}^d$, let $U\subseteq\mathbb{R}^d$ be an open set, let $(\Omega,\mathcal{F},\mathbb{P})$ be a probability space, let $(S,\mathcal{S})$ be a measurable space, let $F=(F(\theta,x))_{(\theta,x)\in\mathbb{R}^d\times S}\colon  \mathbb{R}^d\times S\rightarrow\mathbb{R}$ be a measurable function, let $X_{n,m}\colon  \Omega\rightarrow S$, $n,m\in\mathbb{N}$, be i.i.d.\ random variables which satisfy for every $\theta\in\mathbb{R}^d$ that $\mathbb{E}\big[ |F(\theta,X_{1,1})|^2\big]<\infty$, let $f\colon\mathbb{R}^d\rightarrow\mathbb{R}$ be the function which satisfies for every $\theta\in\mathbb{R}^d$ that $f(\theta)=\mathbb{E}\big[F(\theta,X_{1,1})\big]$, let $\mathcal{M}\subseteq\mathbb{R}^d$ satisfy that
\begin{equation}\mathcal{M}=\big\{\theta\in\mathbb{R}^d\colon [f(\theta)=\inf\nolimits_{\vartheta\in\mathbb{R}^d} f(\vartheta)]\big\},\end{equation}
let $\mathbf{d}(\cdot,\mathcal{M}\cap U):\mathbb{R}^d\rightarrow\mathbb{R}$ be the function which satisfies for every $x\in\mathbb{R}^d$ that
\begin{equation} \mathbf{d}(x,\mathcal{M}\cap U)=\inf \left\{\abs{x-y}\colon y\in(\mathcal{M}\cap U)\right\},\end{equation}
assume for every $x\in S$ that $\mathbb{R}^d\ni\theta\mapsto F(\theta,x)\in\mathbb{R}$ is a locally Lipschitz continuous function, assume that $f|_U\colon U\rightarrow\mathbb{R}$ is a three times continuously differentiable function, assume for every non-empty compact set $\mathfrak{C}\subseteq U$ that $\sup\nolimits_{\theta\in \mathfrak{C}}\mathbb{E}\big[|F(\theta,X_{1,1})|^2+|(\nabla_\theta F)(\theta,X_{1,1})|^2\big]<\infty$, assume that $\mathcal{M}\cap U$ is a non-empty $\mathfrak{d}$-dimensional $\C^1$-submanifold of $\mathbb{R}^d$, assume for every $\theta\in(\mathcal{M}\cap U)$ that $\rank((\Hess f)(\theta))=d-\mathfrak{d}$, for every $M\in\mathbb{N}$, $r\in(0,\infty)$, $\theta\in\mathbb{R}^d$ let $\Theta^{M,r}_{0,\theta}\in\mathbb{R}^d\colon\Omega\rightarrow\mathbb{R}^d$ satisfy for every $\omega\in\Omega$ that $\theta^{M,r}_{0,\theta}(\omega)=\theta$, for every $n,M\in\mathbb{N}$, $r\in(0,\infty)$, $\theta\in\mathbb{R}^d$ let $\Theta^{M,r}_{n,\theta}\colon\Omega\rightarrow\mathbb{R}^d$ satisfy that
\begin{equation}\Theta^{M,r}_{n,\theta}=\Theta^{M,r}_{n-1,\theta}-\frac{r}{n^\rho M}\!\left[\sum_{m=1}^M(\nabla_\theta F)(\Theta^{M,r}_{n-1,\theta},X_{n,m})\right],\end{equation}
and for every $n,M\in\mathbb{N}$, $r,R,\delta\in(0,\infty)$, $\theta\in\mathbb{R}^d$, $x_0\in(\mathcal{M}\cap U)$ let $A_n(M,r,R,\delta,\theta,x_0)\in\mathcal{F}$ satisfy that
\begin{equation}A_n(M,r,R,\delta,\theta,x_0)=\Big\{\forall\;m\in\{0,\ldots,n\}\;\Theta^{M,r}_{m,\theta}\in V_{R,\delta}(x_0)\Big\}.\end{equation}
Then for every $x_0\in(\mathcal{M}\cap U)$ there exist $R_0,\delta_0,\mathfrak{r},c\in(0,\infty)$ such that for every $R\in(0,R_0]$, $\delta\in(0,\delta_0]$, $r\in(0,\mathfrak{r}]$, $n,M\in\mathbb{N}$, $\theta\in V_{R,\delta}(x_0)$ (cf. Definition~\ref{dtbn}) it holds that 
\begin{equation}\left(\mathbb{E}\left[\left(\mathbf{d}(\Theta^{M,r}_{n,\theta},\mathcal{M}\cap U)\wedge 1\right)^2\mathbf{1}_{A_{n-1}}\right]\right)^\frac{1}{2}\leq cn^{-\frac{\rho}{2}}.\end{equation}
\end{prop}

\begin{proof}[Proof of Proposition~\ref{ng_converge}]  Let $x_0\in(\mathcal{M}\cap U)$.  Since $U\subseteq\mathbb{R}^d$ is open, fix $V\in\Proj(x_0)$ (cf. Definition~\ref{define_proj}) which satisfies that $V\subseteq U$.  Fix $R_0,\delta_0\in(0,\infty)$ which satisfy the conclusion of Proposition~\ref{cts_prop_tub} for this set $V$.  Finally, fix $ \mathfrak{r}\in(0,\infty)$ which satisfies the conclusion of Lemma~\ref{lem_projection}.  Let $R\in(0,R_0]$, $\delta\in(0,\delta_0]$, $r\in(0,\mathfrak{r}]$, $n,M\in\mathbb{N}$.   To simplify the notation, and by a small abuse of notation, let $\nabla_\theta F^{M,n}\colon\mathbb{R}^d\times\Omega\rightarrow\mathbb{R}^d$, $n\in\mathbb{N}$, be the functions which satisfy for every $(\theta,\omega)\in\mathbb{R}^d\times\Omega$ that
\begin{equation}\nabla_\theta F^{M,n}(\theta)=\nabla_\theta F^{M,n}(\theta,\omega)=\frac{1}{M}\sum_{m=1}^M(\nabla_\theta F)(\theta,X_{n,m}(\omega)).\end{equation}
Let $\theta\in V_{R,\delta}(x_0)$, let $\Theta^{M,r}_{0,\theta}\colon\Omega\rightarrow\mathbb{R}^d$ satisfy for every $\omega\in\Omega$ that $\Theta^{M,r}_{0,\theta}(\omega)=\theta$, and for every $n\in\mathbb{N}$ let $\Theta^{M,r}_{n,\theta}\colon\Omega\rightarrow\mathbb{R}^d$ satisfy that
\begin{equation}\label{ng_sgd_proof}\Theta^{M,r}_{n,\theta}=\Theta^{M,r}_{n-1,\theta}-\frac{r}{n^\rho}\nabla_\theta F^{M,n}(\Theta^{M,r}_{n-1,\theta}).\end{equation}
We will analyze the solution $\Theta^{M,r}_{n,\theta}$ of \eqref{ng_sgd_proof} on the event $A_{n-1}$.  We observe that
\begin{equation}\label{sgd_0}\Theta^{M,r}_{n,\theta}=\Theta^{M,r}_{n-1,\theta}-\frac{ r}{n^\rho}\nabla f(\Theta^{M,r}_{n-1,\theta})+\frac{ r}{n^\rho}\left(\nabla f(\Theta^{M,r}_{n-1,\theta})-\nabla_\theta F^{M,n}(\Theta^{M,r}_{n-1,\theta})\right).\end{equation}
Since the event $A_{n-1}$ implies that $\Theta^{M,r}_{n-1,\theta}\in V_{R,\delta}(x_0)\subseteq V$, the projection of $\Theta^{M,r}_{n-1,\theta}$ is well-defined and it holds by definition of the distance to $\mathcal{M}\cap U$ that
\begin{equation}\begin{aligned}\label{sgd_000} & \mathbf{d}(\Theta^{M,r}_{n,\theta},\mathcal{M}\cap U)^2 \\ & \leq  \abs{\Theta^{M,r}_{n,\theta}-p(\Theta^{M,r}_{n-1,\theta})}^2 \\ & \leq   \abs{\Theta^{M,r}_{n-1,\theta}-p(\Theta^{M,r}_{n-1,\theta})-\frac{ r}{n^\rho}\nabla f(\Theta^{M,r}_{n-1,\theta})}^2  \\ & \quad +2\left(\Theta^{M,r}_{n-1,\theta}-p(\Theta^{M,r}_{n-1,\theta})-\frac{ r}{n^\rho}\nabla f(\Theta^{M,r}_{n-1,\theta})\right)\cdot\frac{ r}{n^\rho}\left(\nabla f(\Theta^{M,r}_{n-1,\theta})-\nabla_\theta F^{M,n}(\Theta^{M,r}_{n-1,\theta})\right) \\ & \quad + \abs{\frac{ r}{n^\rho}\left(\nabla f(\Theta^{M,r}_{n-1,\theta})-\nabla_\theta F^{M,n}(\Theta^{M,r}_{n-1,\theta})\right)}^2.\end{aligned}\end{equation}
The three terms on the righthand side of \eqref{sgd_000} will be treated separately.  For the first term on the righthand side of \eqref{sgd_000}, the choice of $ \mathfrak{r}\in(0,\infty)$, Lemma~\ref{lem_main}, and Lemma~\ref{lem_projection} prove, following identically the proof leading from \eqref{dis_1} to \eqref{dis_4}, that there exist $\lambda, c\in(0,\infty)$ such that
\begin{equation}\abs{\Theta^{M,r}_{n-1,\theta}-p(\Theta^{M,r}_{n-1,\theta})-\frac{ r}{n^\rho}\nabla f(\Theta^{M,r}_{n-1,\theta})}\leq \left(1-\frac{ r\lambda}{n^\rho}\right)\mathbf{d}(\Theta^{M,r}_{n-1,\theta},\mathcal{M}\cap U)+c\frac{ r}{n^\rho}\mathbf{d}(\Theta^{M,r}_{n-1,\theta},\mathcal{M}\cap U)^2.\end{equation}
Therefore, there exist $\lambda,c\in(0,\infty)$ which satisfy that
\begin{equation}\label{sgd_2}\begin{aligned} \abs{\Theta^{M,r}_{n-1,\theta}-p(\Theta^{M,r}_{n-1,\theta})-\frac{ r}{n^\rho}\nabla f(\Theta^{M,r}_{n-1,\theta})}^2 & \leq \left(1-\frac{ r\lambda}{n^\rho}\right)^2\mathbf{d}(\Theta^{M,r}_{n-1,\theta},\mathcal{M}\cap U)^2 \\ & \quad +c\left(1-\frac{ r\lambda}{n^\rho}\right)\frac{ r}{n^\rho}\mathbf{d}(\Theta^{M,r}_{n-1,\theta},\mathcal{M}\cap U)^3  \\ & \quad +c\frac{ r^2}{n^{2\rho}}\mathbf{d}(\Theta^{M,r}_{n-1,\theta},\mathcal{M}\cap U)^4.\end{aligned}\end{equation}
The remaining two terms of \eqref{sgd_000} and the righthand side of \eqref{sgd_2} will be handled after taking the expectation on the event $A_{n-1}\subseteq\Omega$ which satisfies that
\begin{equation}\label{sgd_02}A_{n-1}=\big\{\omega\in\Omega\colon \Theta^{M,r}_{m,\theta}\in V_{R,\delta}(x_0)\;\forall\;m\in\{0,\ldots,n-1\}\big\}.\end{equation}
After returning to \eqref{sgd_000}, it follows from \eqref{sgd_2} that there exists $c\in(0,\infty)$ which satisfies that
\begin{equation}\label{sgd_3}\begin{aligned}  & \mathbb{E}\left[\mathbf{d}(\Theta^{M,r}_{n,\theta},\mathcal{M}\cap U)^2\mathbf{1}_{A_{n-1}}\right] \\   & \leq \left(1-\frac{ r\lambda}{n^\rho}\right)^2\mathbb{E}\left[\mathbf{d}(\Theta^{M,r}_{n-1,\theta},\mathcal{M}\cap U)^2\mathbf{1}_{A_{n-1}}\right] \\ & \quad  +c\left(1-\frac{ r\lambda}{n^\rho}\right)\frac{ r}{n^\rho}\mathbb{E}\left[\mathbf{d}(\Theta^{M,r}_{n-1,\theta},\mathcal{M}\cap U)^3\mathbf{1}_{A_{n-1}}\right] +c\frac{ r^2}{n^{2\rho}}\mathbb{E}\left[\mathbf{d}(\Theta^{M,r}_{n-1,\theta},\mathcal{M}\cap U)^4\mathbf{1}_{A_{n-1}}\right] \\ & \quad + 2\mathbb{E}\left[\left(\Theta^{M,r}_{n-1,\theta}-\frac{ r}{n^\rho}\nabla f(\Theta^{M,r}_{n-1,\theta})-p(\Theta^{M,r}_{n-1,\theta})\right)\cdot\frac{ r}{n^\rho}\left(\nabla f(\Theta^{M,r}_{n-1,\theta})-\nabla_\theta F^{M,n}(\Theta^{M,r}_{n-1,\theta})\right)\mathbf{1}_{A_{n-1}}\right] \\ & \quad + \mathbb{E}\left[\abs{\frac{ r}{n^\rho}\left(\nabla f(\Theta^{M,r}_{n-1,\theta})-\nabla_\theta F^{M,n}(\Theta^{M,r}_{n-1,\theta})\right)}^2\mathbf{1}_{A_{n-1}}\right].\end{aligned}\end{equation}
For every $m\in\mathbb{R}$ let $\mathcal{F}_m\subseteq\mathcal{F}$ be the sigma algebra which satisfies that
\begin{equation}\label{sgd_sigma}\mathcal{F}_m=\sigma\big(\{X_{1,k}\}_{k=1}^M,\ldots,\{X_{m,k}\}_{k=1}^M\big).\end{equation}
For the penultimate term of \eqref{sgd_3}, since $\mathbf{1}_{A_{n-1}}$ is $\mathcal{F}_{n-1}$-measurable, properties of the conditional expectation imply that
\begin{equation}\begin{aligned}& \mathbb{E}\Big[\Big(\Theta^{M,r}_{n-1,\theta}-\frac{ r}{n^\rho}\nabla f(\Theta^{M,r}_{n-1,\theta})-p(\Theta^{M,r}_{n-1,\theta})\Big)\cdot\frac{ r}{n^\rho}\Big(\nabla f(\Theta^{M,r}_{n-1,\theta})-\nabla_\theta F^{M,n}(\Theta^{M,r}_{n-1,\theta})\Big)\mathbf{1}_{A_{n-1}}\Big] = \\ & \mathbb{E}\Big[\mathbb{E}\Big[\Big(\Theta^{M,r}_{n-1,\theta}-\frac{ r}{n^\rho}\nabla f(\Theta^{M,r}_{n-1,\theta})-p(\Theta^{M,r}_{n-1,\theta})\Big)\cdot\frac{ r}{n^\rho}\Big(\nabla f(\Theta^{M,r}_{n-1,\theta})-\nabla_\theta F^{M,n}(\Theta^{M,r}_{n-1,\theta})\Big)\mathbf{1}_{A_{n-1}}|\mathcal{F}_{n-1}\Big]\Big]. \end{aligned}\end{equation}
Therefore, it holds that
\begin{equation}\label{sgd_4}\begin{aligned} & \mathbb{E}\Big[\Big(\Theta^{M,r}_{n-1,\theta}-\frac{ r}{n^\rho}\nabla f(\Theta^{M,r}_{n-1,\theta})-p(\Theta^{M,r}_{n-1,\theta})\Big)\cdot\frac{ r}{n^\rho}\Big(\nabla f(\Theta^{M,r}_{n-1,\theta})-\nabla_\theta F^{M,n}(\Theta^{M,r}_{n-1,\theta})\Big)\mathbf{1}_{A_{n-1}}\Big] = \\ & \mathbb{E}\Big[\Big(\Theta^{M,r}_{n-1,\theta}-\frac{ r}{n^\rho}\nabla f(\Theta^{M,r}_{n-1,\theta})-p(\Theta^{M,r}_{n-1,\theta})\Big)\mathbf{1}_{A_{n-1}}\cdot\mathbb{E}\Big[\frac{ r}{n^\rho}\Big(\nabla f(\Theta^{M,r}_{n-1,\theta})-\nabla_\theta F^{M,n}(\Theta^{M,r}_{n-1,\theta})\Big)|\mathcal{F}_{n-1}\Big]\Big] \\ & =0,\end{aligned}\end{equation}
where the final equality follows from the fact that the $X_{m,k}$, $m,k\in\mathbb{N}$, are independent and therefore satisfy for every $x\in\mathbb{R}^d$ that
\begin{equation}\label{sgd_04} \mathbb{E}\left[\frac{ r}{n^\rho}\left(\nabla f(x)-\nabla_\theta F^{M,n}(x)\right)|\mathcal{F}_{n-1}\right] = \frac{ r}{Nn^\rho}\sum_{m=1}^M\mathbb{E}\left[\nabla f(x)-\nabla_\theta F(x,X_{n,m})\right] = 0.\end{equation}
The final term of \eqref{sgd_3} is handled using Lemma~\ref{variance_lem}.  Since $\overline{V}_{R,\delta}(x_0)$ is compact, the independence of the $X_{m,k}$, $m,k\in\mathbb{N}$, and Lemma~\ref{variance_lem} prove that there exists $c\in(0,\infty)$ such that
\begin{equation}\label{sgd_5}\mathbb{E}\left[\abs{\frac{ r}{n^\rho}\left(\nabla f(\Theta^{M,r}_{n-1,\theta})-\nabla_\theta F^{M,n}(\Theta^{M,r}_{n-1,\theta})\right)\mathbf{1}_{A_{n-1}}}^2\right]\leq \frac{ cr^2}{Mn^{2\rho}}.\end{equation}
Returning to \eqref{sgd_3}, it follows from \eqref{sgd_4} and \eqref{sgd_5} that there exists $c_1\in(0,\infty)$ such that
\begin{equation}\label{sgd_6}\begin{aligned} & \mathbb{E}\left[\mathbf{d}(\Theta^{M,r}_{n,\theta},\mathcal{M}\cap U)^2\mathbf{1}_{A_{n-1}}\right] \leq \\ &\left(1-\frac{ r\lambda}{n^\rho}\right)^2\mathbb{E}\left[\mathbf{d}(\Theta^{M,r}_{n-1,\theta},\mathcal{M}\cap U)^2\mathbf{1}_{A_{n-1}}\right] +c_1\left(1-\frac{ r\lambda}{n^\rho}\right)\frac{ r}{n^\rho}\left[\mathbf{d}(\Theta^{M,r}_{n-1,\theta},\mathcal{M}\cap U)^3\mathbf{1}_{A_{n-1}}\right]  \\ & +c_1\frac{ r^2}{n^{2\rho}}\mathbb{E}\left[\mathbf{d}(\Theta^{M,r}_{n-1,\theta},\mathcal{M}\cap U)^4\mathbf{1}_{A_{n-1}}\right] + c_1\frac{ r^2}{Mn^{2\rho}}.\end{aligned}\end{equation}
Fix $\delta_1\in(0,\delta_0]$ which satisfies that
\begin{equation}\delta_1\leq \frac{\lambda}{2c_1}\;\;\textrm{and}\;\;\delta_1^2\leq\frac{\lambda}{2c_1 r}.\end{equation}
Let $\delta\in(0,\delta_1]$.  We claim that inequality \eqref{sgd_6} implies that there exists some $c\in(0,\infty)$ which satisfies for every $n\in\mathbb{N}$ that
\begin{equation}\label{sgd_05}\mathbb{E}\left[\left(\mathbf{d}(\Theta^{M,r}_{n,\theta},\mathcal{M}\cap U)\wedge 1\right)^2\mathbf{1}_{A_{n-1}}\right]^\frac{1}{2}\leq cn^{-\frac{\rho}{2}}.\end{equation}
The proof of \eqref{sgd_05} will proceed by induction.  Since $\rho\in(\nicefrac{2}{3},1)$, there exists $n_0\geq 1$ such that for every $n\geq n_0$ it holds that
\begin{equation}\label{sgd_06}\left(n^\rho-(n-1)^\rho- r\lambda+\frac{ r^2\lambda^2}{n^\rho}\right)\leq \left(\rho(n-1)^{\rho-1}- r\lambda+\frac{ r^2\lambda^2}{n^\rho}\right)\leq -\frac{ r \lambda}{2},\end{equation}
where the first inequality follows from the mean value theorem and $\rho\in(\nicefrac{2}{3},1)$ and the second inequality is obtained by choosing $n\in\mathbb{N}$ sufficiently large.  Fix $n_0\geq 1$ which satisfies \eqref{sgd_06} and define $\overline{c}\in(0,\infty)$ which satisfies that
\begin{equation}\overline{c} =\max\left\{\left(n_0-1\right)^\rho,\frac{2c_1 r}{M\lambda}\right\}.\end{equation}
For the base case, the definition of $\overline{c}$ guarantees for every $n\in\{1,\ldots,n_0-1\}$ that
\begin{equation}\label{sgd_006}\mathbb{E}\left[\left(\mathbf{d}(\Theta^{M,r}_{n,\theta},\mathcal{M}\cap U)\wedge 1\right)^2\mathbf{1}_{A_{n-1}}\right]\leq \overline{c}n^{-\rho}.\end{equation}
For the induction step, suppose that for $n\geq n_0$ we have that
\begin{equation}\mathbb{E}\left[\left(\mathbf{d}(\Theta^{M,r}_{n-1,\theta},\mathcal{M}\cap U)\wedge 1\right)^2\mathbf{1}_{A_{n-2}}\right]\leq\overline{c}(n-1)^{-\rho}.\end{equation}
Since the event $A_{n-1}$ implies that
\begin{equation}\mathbf{d}(\Theta^{M,r}_{n-1,\theta},\mathcal{M}\cap U)\leq\delta\leq 1,\end{equation}
it follows from an $L^\infty$-estimate, the inclusion $A_{n-1}\subseteq A_{n-2}$, and the induction hypothesis that for every $m\in\{2,3,4\}$ it holds that
\begin{equation}\mathbb{E}\left[\mathbf{d}(\Theta^{M,r}_{n-1,\theta},\mathcal{M}\cap U)^m\mathbf{1}_{A_{n-1}}\right]\leq \delta^{m-2}\mathbb{E}\left[\mathbf{d}(\Theta^{M,r}_{n-1,\theta},\mathcal{M}\cap U)^2\mathbf{1}_{A_{n-2}}\right]\leq\delta^{m-2}\overline{c}(n-1)^{-\rho}.\end{equation}
Returning to \eqref{sgd_6}, it holds that
\begin{equation}\begin{aligned} \mathbb{E}\left[\mathbf{d}(\Theta^{M,r}_{n,\theta},\mathcal{M}\cap U)^2\mathbf{1}_{A_{n-1}}\right] & \leq   \overline{c}\left(1-\frac{ r\lambda}{n^\rho}\right)^2(n-1)^{-\rho}+\overline{c}c_1\delta\left(1-\frac{ r\lambda}{n^\rho}\right)\frac{ r}{n^\rho}(n-1)^{-\rho} \\ &\quad + \overline{c}c_1\delta^2\frac{ r^2}{n^{2\rho}}(n-1)^{-\rho}+c_1\frac{ r^2}{Mn^{2\rho}}. \end{aligned}\end{equation}
After adding and subtracting $\overline{c}n^{-\rho}$, it holds that
\begin{equation}\label{sgd_7}\begin{aligned} \mathbb{E}& \left[\mathbf{d}(\Theta^{M,r}_{n,\theta},\mathcal{M}\cap U)^2\mathbf{1}_{A_{n-1}}\right] \leq  \overline{c}n^{-\rho}  \\ & + n^{-\rho}\left(\overline{c}(n-1)^{-\rho}\left(n^\rho-(n-1)^\rho-2 r\lambda+\frac{ r^2\lambda^2}{n^\rho}+c_1\delta r\left(1-\frac{ r\lambda}{n^\rho}\right)+c_1\delta^2\frac{ r^2}{n^\rho}\right)+c_1\frac{ r^2}{Mn^\rho}\right).\end{aligned}\end{equation}
Since $\delta\in(0,\delta_1]$, it follows from \eqref{sgd_7} that
\begin{equation}\label{sgd_8}\mathbb{E} \left[\mathbf{d}(\Theta^{M,r}_{n,\theta},\mathcal{M}\cap U)^2\mathbf{1}_{A_{n-1}}\right] \leq  \overline{c}n^{-\rho} + n^{-\rho}\left(\overline{c}(n-1)^{-\rho}\left(n^\rho-(n-1)^\rho- r\lambda+\frac{ r^2\lambda^2}{n^\rho}\right)+c_1\frac{ r^2}{Mn^\rho}\right).\end{equation}
Since $n\geq n_0$, the choice $\overline{c}\geq \frac{2c_1 r}{M\lambda}$, \eqref{sgd_06}, and \eqref{sgd_8} prove that
\begin{equation}\mathbb{E} \left[\mathbf{d}(\Theta^{M,r}_{n,\theta},\mathcal{M}\cap U)^2\mathbf{1}_{A_{n-1}}\right] \leq  \overline{c}n^{-\rho} + n^{-\rho}\left(-\frac{ r\lambda}{2}\overline{c}(n-1)^{-\rho}+c_1\frac{ r^2}{Mn^\rho}\right)\leq\overline{c}n^{-\rho}.\end{equation}
Therefore, we have that
\begin{equation}\label{sgd_9}\mathbb{E}\left[\left(\mathbf{d}(\Theta^{M,r}_{n,\theta},\mathcal{M}\cap U)\wedge 1\right)^2\mathbf{1}_{A_{n-1}}\right]\leq \left[\mathbf{d}(\Theta^{M,r}_{n,\theta},\mathcal{M}\cap U)^2\mathbf{1}_{A_{n-1}}\right]\leq \overline{c}n^{-\rho},\end{equation}
which completes the induction step.  Since the base case is \eqref{sgd_006}, this completes the proof of Proposition~\ref{ng_converge}.  \end{proof}

Proposition~\ref{ng_converge} proves the convergence of SGD to $\mathcal{M}\cap U$ on the event that SGD remains in a basin of attraction.  It remains necessary to prove that, provided the mini-batch size is chosen to be sufficiently large, SGD remains in the basin of attraction for large times.  We prove the first step toward this goal in the proposition below, which estimates the maximal excursion of SGD on the event that the dynamics do not leave a basin of attraction.

\begin{prop}\label{ng_tang}  Let $d\in\mathbb{N}$, $\mathfrak{d}\in\{ 0, 1, \ldots, d - 1 \}$, $\rho\in(\nicefrac{2}{3},1)$, let $\abs{\cdot}\colon\mathbb{R}^d\rightarrow\mathbb{R}$ be the standard norm on $\mathbb{R}^d$, let $U\subseteq\mathbb{R}^d$ be an open set, let $(\Omega,\mathcal{F},\mathbb{P})$ be a probability space, let $(S,\mathcal{S})$ be a measurable space, let $F=(F(\theta,x))_{(\theta,x)\in\mathbb{R}^d\times S}\colon  \mathbb{R}^d\times S\rightarrow\mathbb{R}$ be a measurable function, let $X_{n,m}\colon  \Omega\rightarrow S$, $n,m\in\mathbb{N}$, be i.i.d.\ random variables which satisfy for every $\theta\in\mathbb{R}^d$ that $\mathbb{E}\big[ |F(\theta,X_{1,1})|^2\big]<\infty$, let $f\colon\mathbb{R}^d\rightarrow\mathbb{R}$ be the function which satisfies for every $\theta\in\mathbb{R}^d$ that $f(\theta)=\mathbb{E}\big[F(\theta,X_{1,1})\big]$, let $\mathcal{M}\subseteq\mathbb{R}^d$ satisfy that
\begin{equation}\mathcal{M}=\big\{\theta\in\mathbb{R}^d\colon [f(\theta)=\inf\nolimits_{\vartheta\in\mathbb{R}^d} f(\vartheta)]\big\},\end{equation}
assume for every $x\in S$ that $\mathbb{R}^d\ni\theta\mapsto F(\theta,x)\in\mathbb{R}$ is a locally Lipschitz continuous function, assume that $f|_U\colon U\rightarrow\mathbb{R}$ is a three times continuously differentiable function, assume for every non-empty compact set $\mathfrak{C}\subseteq U$ that $\sup\nolimits_{\theta\in \mathfrak{C}}\mathbb{E}\big[|F(\theta,X_{1,1})|^2+|(\nabla_\theta F)(\theta,X_{1,1})|^2\big]<\infty$, assume that $\mathcal{M}\cap U$ is a non-empty $\mathfrak{d}$-dimensional $\C^1$-submanifold of $\mathbb{R}^d$, assume for every $\theta\in(\mathcal{M}\cap U)$ that $\rank((\Hess f)(\theta))=d-\mathfrak{d}$, for every $M\in\mathbb{N}$, $r\in(0,\infty)$, $\theta\in\mathbb{R}^d$ let $\Theta^{M,r}_{0,\theta}\in\mathbb{R}^d\colon\Omega\rightarrow\mathbb{R}^d$ satisfy for every $\omega\in\Omega$ that $\theta^{M,r}_{0,\theta}(\omega)=\theta$, for every $n,M\in\mathbb{N}$, $r\in(0,\infty)$, $\theta\in\mathbb{R}^d$ let $\Theta^{M,r}_{n,\theta}\colon\Omega\rightarrow\mathbb{R}^d$ satisfy that
\begin{equation}\Theta^{M,r}_{n,\theta}=\Theta^{M,r}_{n-1,\theta}-\frac{r}{n^\rho M}\!\left[\sum_{m=1}^M(\nabla_\theta F)(\Theta^{M,r}_{n-1,\theta},X_{n,m})\right],\end{equation}
and for every $n,M\in\mathbb{N}$, $r,R,\delta\in(0,\infty)$, $\theta\in\mathbb{R}^d$, $x_0\in(\mathcal{M}\cap U)$ let $A_n(M,r,R,\delta,\theta,x_0)\in \mathcal{F}$ satisfy that
\begin{equation}A_n(M,r,R,\delta,\theta,x_0)=\Big\{\forall\;m\in\{0,\ldots,n\}\;\Theta^{M,r}_{m,\theta}\in V_{R,\delta}(x_0)\Big\}.\end{equation}
Then for every $x_0\in(\mathcal{M}\cap U)$ there exist $R_0,\delta_0,\mathfrak{r},c\in(0,\infty)$ such that for every $R\in(0,R_0]$, $\delta\in(0,\delta_0]$, $r\in(0,\mathfrak{r}]$, $n,M\in\mathbb{N}$, $\theta\in V_{\nicefrac{R}{2},\delta}(x_0)$ (cf. Definition~\ref{dtbn}) it holds that
\begin{equation}\mathbb{E}\left[\max_{1\leq k\leq n}\abs{\Theta^{M,r}_{k,\theta}-\Theta^{M,r}_{0,\theta}}\mathbf{1}_{A_{k-1}}\right]\leq \sum_{k=1}^n \left(\mathbb{E}\left[\abs{\Theta^{M,r}_{k,\theta}-\Theta^{M,r}_{k-1,\theta}}^2\mathbf{1}_{A_{k-1}}\right]\right)^{\frac{1}{2}} \leq cr\left(1+M^{-\frac{1}{2}}n^{1-\rho}\right).\end{equation}
\end{prop}

\begin{proof}[Proof of Proposition~\ref{ng_tang}]  Let $\mathbf{d}(\cdot,\mathcal{M}\cap U):\mathbb{R}^d\rightarrow\mathbb{R}$ be the function which satisfies for every $x\in\mathbb{R}^d$ that
\begin{equation} \mathbf{d}(x,\mathcal{M}\cap U)=\inf \left\{\abs{x-y}\colon y\in(\mathcal{M}\cap U)\right\}.\end{equation}
Let $x_0\in(\mathcal{M}\cap U)$.  Since $U\subseteq\mathbb{R}^d$ is open, fix $V\in\Proj(x_0)$ (cf. Definition~\ref{define_proj}) which satisfies that $V\subseteq U$.  Fix $R_0,\delta_0\in(0,\infty)$ which satisfies the conclusion of Proposition~\ref{cts_prop_tub} for this set $V$.  We observe that the regularity of $f$ and the compactness of $\overline{V}_{R_0,\delta_0}(x_0)$ imply that
\begin{equation}\label{tang_0000}\norm{f}_{\C^3(V_{R_0,\delta_0}(x_0))}\leq c.\end{equation}
Finally, fix $ \mathfrak{r}\in(0,\infty)$ which satisfies the conclusion of Lemma~\ref{lem_projection}.  Let $R\in(0,R_0]$, $\delta\in(0,\delta_0]$, $r\in(0,\mathfrak{r}]$, $n,M\in\mathbb{N}$.   As in Proposition~\ref{ng_converge}, let $\nabla_\theta F^{M,n}\colon\mathbb{R}^d\times\Omega\rightarrow\mathbb{R}^d$, $n\in\mathbb{N}$, be the functions which satisfy for every $(\theta,\omega)\in\mathbb{R}^d\times\Omega$ that
\begin{equation}\nabla_\theta F^{M,n}(\theta)=\nabla_\theta F^{M,n}(\theta,\omega)=\frac{1}{M}\sum_{m=1}^M(\nabla_\theta F)(\theta,X_{n,m}(\omega)).\end{equation}
Let $\theta\in V_{\nicefrac{R}{2},\delta}(x_0)$, let $\Theta^{M,r}_{0,\theta}\colon\Omega\rightarrow\mathbb{R}^d$ satisfy for every $\omega\in\Omega$ that $\Theta^{M,r}_{0,\theta}(\omega)=\theta$, and for every $n\in\mathbb{N}$ let $\Theta^{M,r}_{n,\theta}\colon\Omega\rightarrow\mathbb{R}^d$ satisfy that
\begin{equation}\Theta^{M,r}_{n,\theta}=\Theta^{M,r}_{n-1,\theta}-\frac{r}{n^\rho}\nabla_\theta F^{M,n}(\Theta^{M,r}_{n-1,\theta}).\end{equation}
We will first prove that there exists $c\in(0,\infty)$ which satisfies that
\begin{equation}\label{tang_00}\mathbb{E}\left[\abs{\Theta^{M,r}_{n,\theta}-\Theta^{M,r}_{n-1,\theta}}^2\mathbf{1}_{A_{n-1}}\right]^\frac{1}{2}\leq c\left(\frac{ r}{n^{\frac{3}{2}\rho}}+\frac{ r}{n^\rho M^\frac{1}{2}}\right),\end{equation}
where we observe that the constant $c\in(0,\infty)$ can be absorbed by fixing $r\in(0, \mathfrak{r}]$ sufficiently small.  It holds that
\begin{equation}\Theta^{M,r}_{n,\theta}=\Theta^{M,r}_{n-1,\theta}-\frac{ r}{n^\rho}\nabla f(\Theta^{M,r}_{n-1,\theta})+\frac{ r}{n^\rho}\left(\nabla f(\Theta^{M,r}_{n-1,\theta})-\nabla F^{M,n}(\Theta^{M,r}_{n-1,\theta})\right).\end{equation}
Lemma~\ref{lem_main} proves that there exists $c_1\in(0,\infty)$ and $\varepsilon_n\colon A_{n-1}\rightarrow\mathbb{R}^d$ which satisfy that
\begin{equation}\label{tang_000}\abs{\varepsilon_n}\leq c_1\mathbf{d}(\Theta^{M,r}_{n-1},\mathcal{M}\cap U)^2,\end{equation}
such that on the event $A_{n-1}$ it holds that
\begin{equation}\nabla f(\Theta^{M,r}_{n-1,\theta})=\big(\Hess f\big)(p(\Theta^{M,r}_{n-1,\theta}))\cdot(\Theta^{M,r}_{n-1,\theta}-p(\Theta^{M,r}_{n-1,\theta}))+\varepsilon_n.\end{equation}
Therefore, on the event $A_{n-1}$ it holds that
\begin{equation}\label{tang_1}\begin{aligned}  \Theta^{M,r}_{n,\theta} & = \Theta^{M,r}_{n-1,\theta}-\frac{ r}{n^\rho}\big(\Hess f\big)(p(\Theta^{M,r}_{n-1,\theta}))\cdot \big(\Theta^{M,r}_{n-1,\theta}-p(\Theta^{M,r}_{n-1,\theta})\big)-\frac{ r}{n^\rho}\varepsilon_n \\ & \quad +\frac{ r}{n^\rho}\big(\nabla f(\Theta^{M,r}_{n-1,\theta})-\nabla F^{M,n}(\Theta^{M,r}_{n-1,\theta})\big).\end{aligned}\end{equation}
Let $\tilde{\Theta}^{M,r}_{n-1,\theta}\colon A_{n-1}\rightarrow\mathbb{R}^d$ satisfy that
\begin{equation}\tilde{\Theta}^{M,r}_{n-1,\theta}=\Theta^{M,r}_{n-1,\theta}-\frac{ r}{n^\rho}\big(\Hess f\big)(p(\Theta^{M,r}_{n-1,\theta}))\cdot\left(\Theta^{M,r}_{n-1,\theta}-p(\Theta^{M,r}_{n-1,\theta})\right).\end{equation}
After taking the norm-squared of \eqref{tang_1}, on the event $A_{n-1}$ it holds that
\begin{equation}\label{tang_2}\begin{aligned}\abs{\Theta^{M,r}_{n,\theta}-\tilde{\Theta}^{M,r}_{n-1,\theta}}^2 & =  \frac{ r^2}{n^{2\rho}}\abs{\varepsilon_n}^2-2\frac{ r^2}{n^{2\rho}}\varepsilon_n\cdot\left(\nabla f(\Theta^{M,r}_{n-1,\theta})-\nabla F^{M,n}(\Theta^{M,r}_{n-1,\theta})\right) \\ & \quad +\frac{ r^2}{n^{2\rho}}\abs{\nabla f(\Theta^{M,r}_{n-1,\theta})-\nabla F^{M,n}(\Theta^{M,r}_{n-1,\theta})}^2.\end{aligned}\end{equation}
We will estimate \eqref{tang_2} by taking the expectation on the event $A_{n-1}$.  The first term on the righthand side of \eqref{tang_2} is handled using Proposition~\ref{ng_converge} and \eqref{tang_000}.  For the second term, from \eqref{sgd_sigma} we recall the sigma algebras $\mathcal{F}_m\subseteq\mathcal{F}$, $m\in\mathbb{N}$, which satisfy that
\begin{equation}\mathcal{F}_m=\sigma\big(\{X_{1,k}\}_{k=1}^M,\ldots,\{X_{m,k}\}_{k=1}^M\big).\end{equation}
Since $\varepsilon_n\colon A_{n-1}\rightarrow\mathbb{R}^d$ is $\mathcal{F}_{n-1}$-measurable, it follows identically to \eqref{sgd_4} and \eqref{sgd_04} that
\begin{equation}\label{tang_3}\mathbb{E}\left[\varepsilon_n\cdot\left(\nabla f(\Theta^{M,r}_{n-1,\theta})-\nabla F^{M,n}(\Theta^{M,r}_{n-1,\theta})\right)\mathbf{1}_{A_{n-1}}\right]=0.\end{equation}
For the final term on the righthand side of \eqref{tang_2}, the compactness of $\overline{V}_{R_0,\delta_0}(x_0)$, the independence of the $X_{m,k}$, $m,k\in\mathbb{N}$, and  Lemma~\ref{variance_lem} prove that there exists $c\in(0,\infty)$ which satisfies that
\begin{equation}\label{tang_4}\mathbb{E}\left[\abs{\nabla f(\Theta^{M,r}_{n-1,\theta})-\nabla F^{M,n}(\Theta^{M,r}_{n-1,\theta})}^2\mathbf{1}_{A_{n-1}}\right]\leq  \frac{c}{M}.\end{equation}
In combination, Proposition~\ref{ng_converge} and estimates \eqref{tang_000}, \eqref{tang_2}, \eqref{tang_3}, and \eqref{tang_4} prove that there exists $c\in(0,\infty)$ which satisfies that
\begin{equation}\label{tang_5}\begin{aligned}\mathbb{E}\left[\abs{\Theta^{M,r}_{n,\theta}-\tilde{\Theta}^{M,r}_{n-1,\theta}}^2\mathbf{1}_{A_{n-1}}\right] & \leq c\left(\frac{ r^2\delta^2}{n^{2\rho}}\mathbb{E}\left[\mathbf{d}(\Theta^{M,r}_{n-1,\theta},\mathcal{M}\cap U)^2\right]+\frac{ r^2}{n^{2\rho}M}\right) \\ & \leq  c\left(\frac{ r^2\delta^2}{n^{3\rho}}+\frac{ r^2}{n^{2\rho}M}\right).\end{aligned}\end{equation}
It follows from the definition of $\tilde{\Theta}^{M,r}_{n-1,\theta}$, \eqref{tang_0000}, and the definition of the projection that, on the event $A_{n-1}$ there exists $c\in(0,\infty)$ which satisfies that
\begin{equation}\abs{\tilde{\Theta}^{M,r}_{n-1,\theta}-\Theta^{M,r}_{n-1,\theta}}^2=\frac{ r^2}{n^{2\rho}}\abs{\big(\Hess f\big)(p(\Theta^{M,r}_{n-1,\theta}))\cdot(\Theta^{M,r}_{n-1,\theta}-p(\Theta^{M,r}_{n-1,\theta}))}^2\leq c\frac{ r^2}{n^{2\rho}}\mathbf{d}(\Theta^{M,r}_{n-1,\theta},\mathcal{M}\cap U)^2.\end{equation}
Proposition~\ref{ng_converge} proves that there exists $c\in(0,\infty)$ such that
\begin{equation}\label{tang_6}\mathbb{E}\left[\abs{\tilde{\Theta}^{M,r}_{n-1,\theta}-\Theta^{M,r}_{n-1,\theta}}^2\mathbf{1}_{A_{n-1}}\right]\leq \frac{c r^2}{n^{3\rho}}.\end{equation}
It follows from the triangle inequality, \eqref{tang_5}, and \eqref{tang_6} that there exists $c_1\in(0,\infty)$ which satisfies that
\begin{equation}\label{tang_03}\begin{aligned}\mathbb{E}\left[\abs{\Theta^{M,r}_{n,\theta}-\Theta^{M,r}_{n-1,\theta}}^2\mathbf{1}_{A_{n-1}}\right]^\frac{1}{2} & \leq   \mathbb{E}\left[\abs{\Theta^{M,r}_{n,\theta}-\tilde{\Theta}^{M,r}_{n-1,\theta}}^2\mathbf{1}_{A_{n-1}}\right]^\frac{1}{2}+\mathbb{E}\left[\abs{\tilde{\Theta}^{M,r}_{n-1,\theta}-\Theta^{M,r}_{n-1,\theta}}^2\mathbf{1}_{A_{n-1}}\right]^\frac{1}{2} \\ & \leq c_1\left(\frac{ r}{n^{\frac{3}{2}\rho}}+\frac{ r}{n^\rho M^\frac{1}{2}}\right),\end{aligned}\end{equation}
which completes the proof of \eqref{tang_00}.  Since for every $r\leq s\in\mathbb{N}_0$ we have $\mathbf{1}_{A_s}\leq \mathbf{1}_{A_r}$, it follows from \eqref{tang_03}, the triangle inequality, and H\"older's inequality that there exists $c_2\in(0,\infty)$ which satisfies for every $ r\in(0, \mathfrak{r}]$ that
\begin{equation}\label{tang_003}\begin{aligned}\mathbb{E}\left[\max_{1\leq k\leq n}\abs{\Theta^{M,r}_{k,\theta}-\Theta^{M,r}_{0,\theta}}\mathbf{1}_{A_{k-1}}\right] & \leq  \sum_{k=1}^n\mathbb{E}\left[\abs{\Theta^{M,r}_{k,\theta}-\Theta^{M,r}_{k-1,\theta}}\mathbf{1}_{A_{k-1}}\right] \\  & \leq  \sum_{k=1}^n\mathbb{E}\left[\abs{\Theta^{M,r}_{k,\theta}-\Theta^{M,r}_{k-1,\theta}}^2\mathbf{1}_{A_{k-1}}\right]^{\frac{1}{2}} \\ & \leq  c_1 r\left(\sum_{k=1}^n k^{-\frac{3}{2}\rho}+M^{-\frac{1}{2}}\sum_{k=1}^n k^{-\rho}\right) \\ & \leq  c_2 r\left(1+M^{-\frac{1}{2}}n^{1-\rho}\right),\end{aligned}\end{equation}
where we have used that fact that, since $\rho\in(\nicefrac{2}{3},1)$, there exists a $c\in(0,\infty)$ such that
\begin{equation}\label{rho_sum}\sum_{k=1}^n k^{-\frac{3}{2}\rho}+M^{-\frac{1}{2}}\sum_{k=1}^n k^{-\rho}\leq c\left(1+M^{-\frac{1}{2}}n^{1-\rho}\right).\end{equation}
This completes the proof of Proposition~\ref{ng_tang}. \end{proof}

\begin{remark}\label{rho_remark} We emphasize that the assumption $\rho\in(\nicefrac{2}{3},1)$ is only used to ensure the boundedness in $n\in\mathbb{N}$ of the first sum appearing on the lefthand side of \eqref{rho_sum}, which cannot be countered by the mini-batch size $M\in\mathbb{N}$.  Every other argument in the paper applies without change to the case $\rho\in(0,1)$.  In particular, because the result of Proposition~\ref{ng_tang} is not needed if $\mathcal{M}\cap U$ is compact, since SGD cannot leave the basin of attraction in tangential directions, the results of Section~\ref{stoch_discrete} apply for $\rho\in(0,1)$ under this additional compactness assumption.\end{remark}

We will next obtain a lower bound in probability for the events $A_n$, $n\in\mathbb{N}_0$.  For this, we will first establish sufficient conditions for containment in the set $V_{R,\delta}(x_0)$.  Effectively, these conditions split the normal and tangential movement of SGD in the sense that, in order to be outside the set $V_{R,\delta}(x_0)$, a point must be either distance greater than $\delta$ from $\mathcal{M}\cap U$ or be of distance roughly greater than $R$ from $x_0$.

\begin{lem}\label{lem_tub}  Let $d\in\mathbb{N}$, $\mathfrak{d}\in\{ 0, 1, \ldots, d - 1 \}$, let $\abs{\cdot}\colon\mathbb{R}^d\rightarrow\mathbb{R}$ be the standard norm on $\mathbb{R}^d$, and let $\mathcal{N}\subseteq\mathbb{R}^d$  be a $\mathfrak{d}$-dimensional $\C^1$-submanifold, let $\mathbf{d}(\cdot,\mathcal{N}):\mathbb{R}^d\rightarrow\mathbb{R}$ be the function which satisfies for every $x\in\mathbb{R}^d$ that
\begin{equation} \mathbf{d}(x,\mathcal{N})=\inf \left\{\abs{x-y}\colon y\in\mathcal{N}\right\}.\end{equation}
Then for every $x_0\in\mathcal{N}$ there exists $R_0,\delta_0\in(0,\infty)$ such that for every $R\in(0,R_0]$, $\delta\in(0,\delta_0]$, for $V_{R,\delta}(x_0)\subseteq\mathbb{R}^d$ which satisfies that
\begin{equation}V_{R,\delta}(x_0)=\{x+v\in\mathbb{R}^d\colon x\in \overline{B}_R(x_0)\cap\mathcal{N}\;\textrm{and}\;v\in\big(T_x\mathcal{N})^\perp\;\textrm{with}\;\abs{v}<\delta\},\end{equation}
it holds that
\begin{equation}\{x\in\mathbb{R}^d\colon\mathbf{d}(x,\mathcal{N})<\delta\;\textrm{and}\;\abs{x-x_0}\leq R-\delta\;\}\subseteq V_{R,\delta}(x_0).\end{equation}
\end{lem}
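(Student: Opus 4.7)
The plan is to obtain $R_0,\delta_0$ from Proposition~\ref{cts_prop_tub}, which characterizes $V_{R,\delta}(x_0)$ as $\{x\in\mathbb{R}^d\colon \dd(x,\mathcal{N}) = \dd(x,\overline{B}_R(x_0)\cap\mathcal{N}) < \delta\}$, and then to verify this characterization for any $x$ satisfying the hypotheses of the lemma via a direct triangle inequality.

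First I would invoke Proposition~\ref{def_projection} (assuming $\mathfrak{d}\geq 1$; the case $\mathfrak{d}=0$ is trivial and is addressed at the end) to fix an open neighborhood $V_*(x_0)\subseteq\mathbb{R}^d$ of $x_0$ on which the nearest-point projection $x\mapsto x_*$ onto $\mathcal{N}$ is well-defined and $\C^1$-smooth. I would then apply Proposition~\ref{cts_prop_tub} with this $V_*(x_0)$ to produce constants $R_0,\delta_0\in(0,\infty)$ such that for every $R\in(0,R_0]$ and $\delta\in(0,\delta_0]$ the set $V_{R,\delta}(x_0)$ admits the alternative description above and satisfies $\overline{V}_{R,\delta}(x_0)\subseteq V_*(x_0)$. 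By shrinking $R_0$ and $\delta_0$ further if necessary, I would arrange that every $x\in\mathbb{R}^d$ with $\abs{x-x_0}\leq R_0$ and $\dd(x,\mathcal{N})<\delta_0$ lies in $V_*(x_0)$, so that its projection $x_*$ exists.

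The main step is then the following elementary triangle-inequality calculation. Fix $R\in(0,R_0]$, $\delta\in(0,\delta_0]$, and $x\in\mathbb{R}^d$ with $\dd(x,\mathcal{N})<\delta$ and $\abs{x-x_0}\leq R-\delta$. Since by construction $x\in V_*(x_0)$, the projection $x_*\in\mathcal{N}$ is well-defined and satisfies $\abs{x-x_*}=\dd(x,\mathcal{N})<\delta$. The triangle inequality then gives
\[
\abs{x_*-x_0}\leq\abs{x_*-x}+\abs{x-x_0}<\delta+(R-\delta)=R,
\]
so $x_*\in\overline{B}_R(x_0)\cap\mathcal{N}$. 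In particular $\dd(x,\overline{B}_R(x_0)\cap\mathcal{N})\leq\abs{x-x_*}=\dd(x,\mathcal{N})$, and the reverse inequality is automatic since $\overline{B}_R(x_0)\cap\mathcal{N}\subseteq\mathcal{N}$. Consequently $\dd(x,\mathcal{N})=\dd(x,\overline{B}_R(x_0)\cap\mathcal{N})<\delta$, and Proposition~\ref{cts_prop_tub} yields $x\in V_{R,\delta}(x_0)$.

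For the degenerate case $\mathfrak{d}=0$, the manifold $\mathcal{N}$ is discrete, so I would choose $R_0>0$ with $\mathcal{N}\cap\overline{B}_{2R_0}(x_0)=\{x_0\}$; then $V_{R,\delta}(x_0)=B_\delta(x_0)$ for every $R\in(0,R_0]$, and any $x$ with $\dd(x,\mathcal{N})<\delta$ and $\abs{x-x_0}\leq R-\delta$ forces the nearest point in $\mathcal{N}$ to be $x_0$ itself by the same triangle inequality, yielding $\abs{x-x_0}=\dd(x,\mathcal{N})<\delta$ and hence $x\in V_{R,\delta}(x_0)$. I do not anticipate a serious obstacle here; the entire argument is triangle-inequality bookkeeping on top of Proposition~\ref{cts_prop_tub}, the only subtle point being that $R_0$ and $\delta_0$ must be chosen small enough that the projection $x_*$ is defined on the set of points satisfying the hypothesis.
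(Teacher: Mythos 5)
Your proposal is correct and follows essentially the same route as the paper: fix $V_*(x_0)$ via Proposition~\ref{def_projection}, take $R_0,\delta_0$ from Proposition~\ref{cts_prop_tub} so that $V_{R,\delta}(x_0)$ coincides with $\{x\colon \dd(x,\mathcal{N})=\dd(x,\overline{B}_R(x_0)\cap\mathcal{N})<\delta\}$, and then verify that characterization by the same triangle inequality $\abs{\tilde{x}-x_0}\leq\abs{\tilde{x}-x}+\abs{x-x_0}<R$. The only cosmetic difference is that the paper produces the nearest point as a (possibly non-unique) minimizer in $\overline{\mathcal{N}}$ rather than via the projection $x_*$, which lets it skip your extra shrinking of $R_0,\delta_0$; your handling of that point, and of the degenerate case $\mathfrak{d}=0$, is sound.
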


\begin{proof}[Proof of Lemma~\ref{lem_tub}]  Let $x_0\in\mathcal{N}$, let $V\in\Proj(x_0)$ (cf. Definition~\ref{define_proj}), and let $R_0,\delta_0\in(0,\infty)$ satisfy the conclusion of Proposition~\ref{cts_prop_tub}.  That is, for every $R\in(0,R_0]$, $\delta\in(0,\delta_0]$ it holds that $\overline{V}_{R,\delta}(x_0)\subseteq V$ and that
\begin{equation}V_{R,\delta}(x_0) = \{x\in\mathbb{R}^d\colon\mathbf{d}(x,\mathcal{N})=\mathbf{d}(x,\overline{B}_R(x_0)\cap\mathcal{N})<\delta\}.\end{equation}
Suppose that $x\in\mathbb{R}^d$ satisfies that
\begin{equation}\label{lem_tub_1}\mathbf{d}(x,\mathcal{N})<\delta\;\;\textrm{with}\;\;\abs{x-x_0}\leq R-\delta.\end{equation}
The definition of the distance to $\mathcal{N}$ and $\abs{x-x_0}\leq R-\delta$ imply that there exists a possibly non-unique $\tilde{x}\in\overline{\mathcal{N}}$ which satisfies that
\begin{equation}\abs{x-\tilde{x}}=\mathbf{d}(x,\mathcal{N})<\delta.\end{equation}
The triangle inequality implies that
\begin{equation}\abs{\tilde{x}-x_0}\leq \abs{\tilde{x}-x}+\abs{x-x_0}<\delta+(R-\delta)<R.\end{equation}
It follows that $\tilde{x}\in \overline{\overline{B}_R(x_0)\cap\mathcal{N}}$, and therefore that
\begin{equation}\label{lem_tub_2}\mathbf{d}(x,\mathcal{N})=\mathbf{d}(x,\overline{B}_R(x_0)\cap\mathcal{N})<\delta.\end{equation}
It follows from \eqref{lem_tub_1} and \eqref{lem_tub_2} that $x\in V_{R,\delta}(x_0)$, which completes the proof of Lemma~\ref{lem_tub}.  \end{proof}

In the following proposition, we obtain a lower bound in probability for the sets $A_n$, $n\in\mathbb{N}_0$.  The interesting observation is that Proposition~\ref{ng_converge} and Proposition~\ref{ng_tang}, which obtain estimates for the solution of \eqref{ng_sgd} conditioned on the events $A_n$, $n\in\mathbb{N}_0$, can be used together and inductively to obtain lower bound in probability for the events $A_n$, $n\in\mathbb{N}_0$.  Namely, Proposition~\ref{ng_converge} implies that, on the event $A_{n-1}$, the process is converging to $\mathcal{M}\cap U$ in the normal directions with high probability, and Proposition~\ref{ng_tang} can be used to estimate the probability that the solution \eqref{ng_sgd} escapes the basin of attraction along the tangential directions.  We first introduce some convenient notation.

\begin{prop}\label{ng_good_set}  Let $d\in\mathbb{N}$, $\mathfrak{d}\in\{ 0, 1, \ldots, d - 1 \}$, $\rho\in(\nicefrac{2}{3},1)$, let $\abs{\cdot}\colon\mathbb{R}^d\rightarrow\mathbb{R}$ be the standard norm on $\mathbb{R}^d$, let $U\subseteq\mathbb{R}^d$ be an open set, let $(\Omega,\mathcal{F},\mathbb{P})$ be a probability space, let $(S,\mathcal{S})$ be a measurable space, let $F=(F(\theta,x))_{(\theta,x)\in\mathbb{R}^d\times S}\colon  \mathbb{R}^d\times S\rightarrow\mathbb{R}$ be a measurable function, let $X_{n,m}\colon  \Omega\rightarrow S$, $n,m\in\mathbb{N}$, be i.i.d.\ random variables which satisfy for every $\theta\in\mathbb{R}^d$ that $\mathbb{E}\big[ |F(\theta,X_{1,1})|^2\big]<\infty$, let $f\colon\mathbb{R}^d\rightarrow\mathbb{R}$ be the function which satisfies for every $\theta\in\mathbb{R}^d$ that $f(\theta)=\mathbb{E}\big[F(\theta,X_{1,1})\big]$, let $\mathcal{M}\subseteq\mathbb{R}^d$ satisfy that
\begin{equation}\mathcal{M}=\big\{\theta\in\mathbb{R}^d\colon [f(\theta)=\inf\nolimits_{\vartheta\in\mathbb{R}^d} f(\vartheta)]\big\},\end{equation}
let $(\cdot)_+\colon\mathbb{R}\rightarrow\mathbb{R}$ be the function which satisfies for every $x\in\mathbb{R}$ that
\begin{equation} (x)_+=\max(x,0),\end{equation}
assume for every $x\in S$ that $\mathbb{R}^d\ni\theta\mapsto F(\theta,x)\in\mathbb{R}$ is a locally Lipschitz continuous function, assume that $f|_U\colon U\rightarrow\mathbb{R}$ is a three times continuously differentiable function, assume for every non-empty compact set $\mathfrak{C}\subseteq U$ that $\sup\nolimits_{\theta\in \mathfrak{C}}\mathbb{E}\big[|F(\theta,X_{1,1})|^2+|(\nabla_\theta F)(\theta,X_{1,1})|^2\big]<\infty$, assume that $\mathcal{M}\cap U$ is a non-empty $\mathfrak{d}$-dimensional $\C^1$-submanifold of $\mathbb{R}^d$, assume for every $\theta\in(\mathcal{M}\cap U)$ that $\rank((\Hess f)(\theta))=d-\mathfrak{d}$, for every $M\in\mathbb{N}$, $r\in(0,\infty)$, $\theta\in\mathbb{R}^d$ let $\Theta^{M,r}_{0,\theta}\in\mathbb{R}^d\colon\Omega\rightarrow\mathbb{R}^d$ satisfy for every $\omega\in\Omega$ that $\theta^{M,r}_{0,\theta}(\omega)=\theta$, for every $n,M\in\mathbb{N}$, $r\in(0,\infty)$, $\theta\in\mathbb{R}^d$ let $\Theta^{M,r}_{n,\theta}\colon\Omega\rightarrow\mathbb{R}^d$ satisfy that
\begin{equation}\Theta^{M,r}_{n,\theta}=\Theta^{M,r}_{n-1,\theta}-\frac{r}{n^\rho M}\!\left[\sum_{m=1}^M(\nabla_\theta F)(\Theta^{M,r}_{n-1,\theta},X_{n,m})\right],\end{equation}
and for every $n,M\in\mathbb{N}$, $r,R,\delta\in(0,\infty)$, $\theta\in\mathbb{R}^d$, $x_0\in(\mathcal{M}\cap U)$ let $A_n(M,r,R,\delta,\theta,x_0)\in\mathcal{F}$ satisfy that
\begin{equation}A_n(M,r,R,\delta,\theta,x_0)=\Big\{\forall\;m\in\{0,\ldots,n\}\;\Theta^{M,r}_{m,\theta}\in V_{R,\delta}(x_0)\Big\}.\end{equation}
Then for every $x_0\in(\mathcal{M}\cap U)$ there exist $R_0,\delta_0,\mathfrak{r},c\in(0,\infty)$ such that for every $R\in(0,R_0]$, $\delta\in(0,\delta_0]$, $r\in(0,\mathfrak{r}]$, $n,M\in\mathbb{N}$, $\theta\in V_{\nicefrac{R}{2},\delta}(x_0)$ (cf. Definition~\ref{dtbn}) it holds that

\begin{equation}\mathbb{P}[A_n]\geq \prod_{k=1}^n\left(1-\frac{c}{Mk^{2\rho}}\right)_+-cM^{-1}n^{1-\rho}-\frac{cr\left(1+M^{-\frac{1}{2}}n^{1-\rho}\right)}{\left(\frac{R}{2}-2\delta\right)_+}.\end{equation}
\end{prop}

\begin{proof}[Proof of Proposition~\ref{ng_good_set}]  Let $\mathbf{d}(\cdot,\mathcal{M}\cap U):\mathbb{R}^d\rightarrow\mathbb{R}$ be the function which satisfies for every $x\in\mathbb{R}^d$ that
\begin{equation} \mathbf{d}(x,\mathcal{M}\cap U)=\inf \left\{\abs{x-y}\colon y\in(\mathcal{M}\cap U)\right\}.\end{equation}
Let $x_0\in(\mathcal{M}\cap U)$.  Since $U\subseteq\mathbb{R}^d$ is open, fix $V\in\Proj(x_0)$ (cf. Definition~\ref{define_proj}) which satisfies that $V\subseteq U$.  Fix $R_0,\delta_0\in(0,\infty)$ which satisfy the conclusion of Proposition~\ref{cts_prop_tub} for this set $V$.  Fix $ \mathfrak{r}\in(0,\infty)$ which satisfies the conclusion of Lemma~\ref{lem_projection}.  Let $R\in(0,R_0]$, $\delta\in(0,\delta_0]$, $r\in(0,\mathfrak{r}]$, $n,M\in\mathbb{N}$.  As in Proposition~\ref{ng_converge}, let $\nabla_\theta F^{M,n}\colon\mathbb{R}^d\times\Omega\rightarrow\mathbb{R}^d$, $n\in\mathbb{N}$, be the functions which satisfy for every $(\theta,\omega)\in\mathbb{R}^d\times\Omega$ that
\begin{equation}\nabla_\theta F^{M,n}(\theta)=\nabla_\theta F^{M,n}(\theta,\omega)=\frac{1}{M}\sum_{m=1}^M(\nabla_\theta F)(\theta,X_{n,m}(\omega)).\end{equation}
Let $\theta\in V_{\nicefrac{R}{2},\delta}(x_0)$, let $\Theta^{M,r}_{0,\theta}\colon\Omega\rightarrow\mathbb{R}^d$ satisfy for every $\omega\in\Omega$ that $\Theta^{M,r}_{0,\theta}(\omega)=\theta$, and for every $n\in\mathbb{N}$ let $\Theta^{M,r}_{n,\theta}\colon\Omega\rightarrow\mathbb{R}^d$ satisfy that
\begin{equation}\Theta^{M,r}_{n,\theta}=\Theta^{M,r}_{n-1,\theta}-\frac{r}{n^\rho}\nabla_\theta F^{M,n}(\Theta^{M,r}_{n-1,\theta}).\end{equation}
Since it holds that
\begin{equation} \mathbf{d}(\Theta^{M,r}_{n,\theta},\mathcal{M}\cap U)\geq \delta\;\;\textrm{implies that}\;\;\Theta^{M,r}_{n,\theta}\notin V_{R,\delta}(x_0),\end{equation}
it follows that
\begin{equation}\label{tan_000}\begin{aligned} \mathbb{P}\left[\Theta^{M,r}_{n,\theta}\notin V_{R,\delta}(x_0),A_{n-1}\right] & =   \mathbb{P}\left[\mathbf{d}(\Theta^{M,r}_{n,\theta},\mathcal{M}\cap U)\geq \delta, A_{n-1}\right] \\ & \quad + \mathbb{P}\left[\mathbf{d}(\Theta^{M,r}_{n,\theta},\mathcal{M}\cap U)<\delta, \Theta^{M,r}_{n,\theta}\notin V_{R,\delta}(x_0), A_{n-1}\right].\end{aligned}\end{equation}
The two terms on the righthand side of \eqref{tan_000} will be handled separately.  We will first prove that there exists $c\in(0,\infty)$ which satisfies that
\begin{equation}\label{tan_00}\mathbb{P}\left[\mathbf{d}(\Theta^{M,r}_{n,\theta},\mathcal{M}\cap U)\geq\delta, A_{n-1}\right]\leq \frac{c}{Mn^{2\rho}}\mathbb{P}\left[A_{n-1}\right]+\frac{c}{Mn^\rho}.\end{equation}
On the event $A_{n-1}$, it follows from Lemma~\ref{lem_main} that there exists $\varepsilon_n\colon A_{n-1}\rightarrow\mathbb{R}^d$, $c_1\in(0,\infty)$ such that
\begin{equation}\label{tan_0}\abs{\varepsilon_n}\leq c_1\mathbf{d}(\Theta^{M,r}_{n-1,\theta},\mathcal{M}\cap U)^2,\end{equation}
and such that on the event $A_{n-1}$ it holds that
\begin{equation}\nabla f(\Theta^{M,r}_{n-1,\theta})=\big(\Hess f\big)(p(\Theta^{M,r}_{n-1,\theta}))\cdot(\Theta^{M,r}_{n-1,\theta}-p(\Theta^{M,r}_{n-1,\theta}))+\varepsilon_n.\end{equation}
Therefore, on the event $A_{n-1}$, we have that
\begin{equation}\begin{aligned} \Theta^{M,r}_{n,\theta} & =\Theta^{M,r}_{n-1,\theta}-\frac{ r}{n^\rho}\big(\Hess f\big)(p(\Theta^{M,r}_{n-1,\theta}))\cdot(\Theta^{M,r}_{n-1,\theta}-p(\Theta^{M,r}_{n-1,\theta}))-\frac{ r}{n^\rho}\varepsilon_n \\ & \quad +\frac{ r}{n^\rho}\left(\nabla f(\Theta^{M,r}_{n-1,\theta})-\nabla F^{M,n}(\Theta^{M,r}_{n-1,\theta})\right).\end{aligned}\end{equation}
Lemma~\ref{lem_projection}, \eqref{tan_0}, the choice of $ \mathfrak{r}\in(0,\infty)$, the definition of the projection, and the triangle inequality prove that there exist $c_1,\lambda\in(0,\infty)$ such that on the event $A_{n-1}$ it holds that
\begin{equation}\label{tan_1}\begin{aligned} & \mathbf{d}(\Theta^{M,r}_{n,\theta},\mathcal{M}\cap U) \\ & \leq  \abs{\Theta^{M,r}_{n-1,\theta}-p(\Theta^{M,r}_{n-1,\theta})-\frac{ r}{n^\rho}\big(\Hess f\big)(p(\Theta^{M,r}_{n-1,\theta}))\cdot(\Theta^{M,r}_{n-1,\theta}-p(\Theta^{M,r}_{n-1,\theta}))} \\ & \quad +\abs{\frac{ r}{n^\rho}\varepsilon_n}+\abs{\frac{ r}{n^\rho}\left(\nabla f(\Theta^{M,r}_{n-1,\theta})-\nabla F^{M,n}(\Theta^{M,r}_{n-1,\theta})\right)} \\ & \leq  \left(1-\frac{ r\lambda}{n^\rho}\right)\mathbf{d}(\Theta^{M,r}_{n-1,\theta},\mathcal{M}\cap U)+c_1\frac{ r}{n^\rho}\mathbf{d}(\Theta^{M,r}_{n-1,\theta},\mathcal{M}\cap U)^2+\frac{ r}{n^\rho}\abs{\nabla f(\Theta^{M,r}_{n-1,\theta})-\nabla F^{M,n}(\Theta^{M,r}_{n-1,\theta})}.\end{aligned}\end{equation}
Fix $\delta_1\in(0,\delta_0]$ which satisfies that
\begin{equation}c_1\delta_1\leq \frac{\lambda}{2}.\end{equation}
Let $\delta\in(0,\delta_1]$.  On the event $A_{n-1}$, it follows from \eqref{tan_1} and the choice of $\delta_1\in(0,\delta_0]$ that
\begin{equation}\mathbf{d}(\Theta^{M,r}_{n,\theta},\mathcal{M}\cap U)\leq \left(1-\frac{ r\lambda}{2n^\rho}\right)\mathbf{d}(\Theta^{M,r}_{n-1,\theta},\mathcal{M}\cap U)+\frac{ r}{n^\rho}\abs{\nabla f(\Theta^{M,r}_{n-1,\theta})-\nabla F^{M,n}(\Theta^{M,r}_{n-1,\theta})}.\end{equation}
We therefore conclude that
\begin{equation}\label{tan_2}\begin{aligned}  & \mathbb{P}\left[\mathbf{d}(\Theta^{M,r}_{n,\theta},\mathcal{M}\cap U)\geq\delta,A_{n-1}\right] \leq \\ &   \mathbb{P}\left[\abs{\nabla f(\Theta^{M,r}_{n-1,\theta})-\nabla F^{M,n}(\Theta^{M,r}_{n-1,\theta})}\geq\frac{\delta n^\rho}{2 r}, \Theta^{M,r}_{n-1,\theta}\in V_{R,\frac{\delta}{2}}(x_0), A_{n-2}\right] \\  & +  \mathbb{P}\left[\abs{\nabla f(\Theta^{M,r}_{n-1,\theta})-\nabla F^{M,n}(\Theta^{M,r}_{n-1,\theta})}\geq\frac{\delta\lambda}{2}, \Theta^{M,r}_{n-1,\theta}\in V_{R,\delta}(x_0)\setminus V_{R,\frac{\delta}{2}}(x_0), A_{n-2}\right].\end{aligned}\end{equation}
Similarly to \eqref{sgd_4} and computation \eqref{sgd_04}, it follows from the independence of the random variables $X_{m,k}$, $m,k\in\mathbb{N}$, that
\begin{equation}\label{mean_4}\begin{aligned} & \mathbb{P}\left[\;\abs{\nabla f(\Theta^{M,r}_{n-1,\theta})-\nabla_\theta F^{M,n}(\Theta^{M,r}_{n-1,\theta})}\geq \frac{\delta n^\rho}{2 r}, \Theta^{M,r}_{n-1,\theta}\in V_{R,\frac{\delta}{2}}(x_0), A_{n-2}\right] \\ & \leq \sup_{\theta\in V_{R,\frac{\delta}{2}}(x_0)}\mathbb{P}\left[\;\abs{\nabla f(\theta)-\nabla_\theta F^{M,n}(\theta)}\geq \frac{\delta n^\rho}{2 r}\;\right]\mathbb{P}\left[\Theta^{M,r}_{n-1,\theta}\in V_{R,\frac{\delta}{2}}(x_0), A_{n-2}\right],\end{aligned}\end{equation}
and that
\begin{equation}\label{mean_5}\begin{aligned} & \mathbb{P}\left[\;\abs{\nabla f(\Theta^{M,r}_{n-1,\theta})-\nabla_\theta F^{M,n}(\Theta^{M,r}_{n-1,\theta})}\geq \frac{\delta\lambda}{2}, \Theta^{M,r}_{n-1,\theta}\in V_{R,\delta}(x_0)\setminus V_{R,\frac{\delta}{2}}(x_0),  A_{n-2}\right] \\ & \leq\sup_{\theta\in V_{R,\delta}(x_0)\setminus V_{R,\frac{\delta}{2}}(x_0)}\mathbb{P}\left[\;\abs{\nabla f(\theta)-\nabla_\theta F^{M,n}(\theta)}\geq \frac{\delta\lambda}{2}\;\right]\mathbb{P}\left[\Theta^{M,r}_{n-1,\theta}\in V_{R,\delta}(x_0)\setminus V_{R,\frac{\delta}{2}}(x_0), A_{n-2}\right].\end{aligned}\end{equation}
The definition of $A_{n-1}$, Chebyshev's inequality, Lemma~\ref{variance_lem}, and \eqref{mean_4} prove that there exists $c\in(0,\infty)$ which satisfies that
\begin{equation}\label{mean_04}\begin{aligned}  \mathbb{P}\left[\;\abs{\nabla f(\Theta^{M,r}_{n-1,\theta})-\nabla_\theta F^{M,n}(\Theta^{M,r}_{n-1,\theta})}\geq \frac{\delta n^\rho}{2 r}, \Theta^{M,r}_{n-1,\theta}\in V_{R,\frac{\delta}{2}}(x_0), A_{n-2}\right]\leq &  \frac{c}{M}\cdot \frac{4 r^2}{\delta^2n^{2\rho}}\mathbb{P}\left[A_{n-1}\right] \\ \leq & \frac{c}{Mn^{2\rho}}\mathbb{P}\left[A_{n-1}\right].\end{aligned}\end{equation}
In the case of \eqref{mean_5}, Proposition~\ref{ng_converge} and Chebyshev's inequality prove that, for the indicator function $\mathbf{1}_{A_{n-2}}$ of the event $A_{n-2}$, there exists $c\in(0,\infty)$ which satisfies that
\begin{equation}\begin{aligned}\mathbb{P}\left[\Theta^{M,r}_{n-1,\theta}\in V_{R,\delta}(x_0)\setminus V_{R,\frac{\delta}{2}}(x_0), A_{n-2}\right] & \leq  \mathbb{P}\left[\left(\mathbf{d}\left(\Theta^{M,r}_{n-1,\theta},\mathcal{M}\cap U\right)\wedge 1\right)^2\mathbf{1}_{A_{n-2}}\geq \frac{\delta^2}{4}\right] \\ & \leq \frac{4c}{\delta^2}n^{-\rho} \\ & \leq  cn^{-\rho},\end{aligned}\end{equation}
where we have used the fact that, since $\rho\in(\nicefrac{2}{3},1)$, there exists $c\in(0,\infty)$ such that for every $n\in\mathbb{N}$ it holds that $(n-1)^{-\rho}\leq c n^{-\rho}$.  Furthermore, Chebyshev's inequality and Lemma~\ref{variance_lem} prove that there exists $c\in(0,\infty)$ which satisfies that
\begin{equation}\mathbb{P}\left[\;\abs{\nabla f(\Theta^{M,r}_{n-1,\theta})-\nabla_\theta F^{M,n}(\Theta^{M,r}_{n-1,\theta})}\geq \frac{\delta\lambda}{2}\;\right]\leq \frac{c}{M}\cdot\frac{4}{\delta^2\lambda^2}\leq \frac{c}{M}.\end{equation}
Returning to \eqref{mean_5}, the previous two inequalities prove that there exists $c\in(0,\infty)$ which satisfies that
\begin{equation}\label{mean_004} \mathbb{P}\left[\;\abs{\nabla f(\Theta^{M,r}_{n-1,\theta})-\nabla_\theta F^{M,n}(\Theta^{M,r}_{n-1,\theta})}\geq \frac{\delta\lambda}{2}, \Theta^{M,r}_{n-1,\theta}\in V_\delta\setminus V_{\frac{\delta}{2}},  A_{n-2}\right]\leq \frac{c}{Mn^\rho}.\end{equation}
Combining \eqref{tan_2}, \eqref{mean_04}, and \eqref{mean_004}, there exists $c\in(0,\infty)$ such that
\begin{equation}\label{mean_0004}\mathbb{P}\left[\mathbf{d}(\Theta^{M,r}_{n,\theta},\mathcal{M}\cap U)\geq \delta,A_{n-1}\right]\leq \frac{c}{Mn^{2\rho}}\mathbb{P}\left[A_{n-1}\right]+\frac{c}{Mn^\rho},\end{equation}
which completes the proof of \eqref{tan_00}.  Returning to \eqref{tan_000}, it follows from \eqref{mean_0004} that there exists $c\in(0,\infty)$ such that
\begin{equation}\begin{aligned} & \mathbb{P}\left[\Theta^{M,r}_{n,\theta}\notin V_{R,\delta}(x_0), A_{n-1}\right] \\ &  \leq \frac{c}{Mn^{2\rho}}\mathbb{P}\left[A_{n-1}\right]+\frac{c}{Mn^\rho}+\mathbb{P}\left[\mathbf{d}(\Theta^{M,r}_{n,\theta},\mathcal{M}\cap U)<\delta, \Theta^{M,r}_{n,\theta}\notin V_{R,\delta}(x_0), A_{n-1}\right].\end{aligned}\end{equation}
Therefore, there exists $c\in(0,\infty)$ which satisfies that
\begin{equation}\begin{aligned}\label{tan_6}  \mathbb{P}[A_n]  & =   \mathbb{P}\left[\Theta^{M,r}_{n,\theta}\in V_{R,\delta}(x_0), A_{n-1}\right] \\   & \geq  \left(1-\frac{c}{Mn^{2\rho}}\right)_+\mathbb{P}\left[A_{n-1}\right]-\frac{c}{Mn^\rho} -\mathbb{P}\left[\mathbf{d}(\Theta^{M,r}_{n,\theta},\mathcal{M}\cap U)<\delta, \Theta^{M,r}_{n,\theta}\notin V_{R,\delta}(x_0), A_{n-1}\right].\end{aligned}\end{equation}
We will prove inductively that \eqref{tan_6} implies that there exists $c\in(0,\infty)$ such that for every $n\in\mathbb{N}$ it holds that
\begin{equation}\mathbb{P}[A_n]\geq \prod_{k=1}^n\left(1-\frac{c}{Mk^{2\rho}}\right)_+-\sum_{k=1}^n\frac{c}{Mk^\rho}-\sum_{k=1}^n \mathbb{P}\left[\mathbf{d}(\Theta^{M,r}_{k,\theta},\mathcal{M}\cap U)<\delta, \Theta^{M,r}_{k,\theta}\notin V_{R,\delta}(x_0), A_{k-1}\right].\end{equation}
The base case $n=0$ follows immediately from $\theta\in V_{\nicefrac{R}{2},\delta}(x_0)$.  For the inductive step, suppose that \eqref{tan_7} is satisfied for some $n\in\mathbb{N}$.  It follows from \eqref{tan_6} that
\begin{equation}\begin{aligned} \mathbb{P}\left[A_{n+1}\right] & \geq   \left(1-\frac{c}{M(n+1)^{2\rho}}\right)_+\mathbb{P}\left[A_n\right]-\frac{c}{M(n+1)^\rho} \\ & \quad -\mathbb{P}\left[\mathbf{d}(\Theta^{M,r}_{n+1,\theta},\mathcal{M}\cap U)<\delta, \Theta^{M,r}_{n+1,\theta}\notin V_{R,\delta}(x_0), A_n\right].\end{aligned}\end{equation}
It then follows from the inductive hypothesis \eqref{tan_7} that
\begin{equation}\begin{aligned} & \mathbb{P}\left[A_{n+1}\right] \\ & \geq   \prod_{k=1}^{n+1}\left(1-\frac{c}{Mk^{2\rho}}\right)_+\mathbb{P}\left[A_0\right]-\frac{c}{M(n+1)^\rho} \\ & -\mathbb{P}\left[\mathbf{d}(\Theta^{M,r}_{n+1,\theta},\mathcal{M}\cap U)<\delta, \Theta^{M,r}_{n+1,\theta}\notin V_{R,\delta}(x_0), A_n\right] \\ & -\left(1-\frac{c}{M(n+1)^{2\rho}}\right)_+\left(\sum_{k=1}^n\frac{c}{Mk^\rho}+\sum_{k=1}^n\mathbb{P}\left[\mathbf{d}(\Theta^{M,r}_{k,\theta},\mathcal{M}\cap U)<\delta, \Theta^{M,r}_{k,\theta}\notin V_{R,\delta}(x_0), A_{k-1}\right]\right),\end{aligned}\end{equation}
which proves that
\begin{equation}\begin{aligned} & \mathbb{P}\left[A_{n+1}\right] \\ & \geq \prod_{k=1}^{n+1}\left(1-\frac{c}{Mk^{2\rho}}\right)_+\mathbb{P}\left[A_0\right]-\sum_{k=1}^{n+1}\frac{c}{Mk^\rho}-\sum_{k=1}^{n+1}\mathbb{P}\left[\mathbf{d}(\Theta^{M,r}_{k,\theta},\mathcal{M}\cap U)<\delta, \Theta^{M,r}_{k,\theta}\notin V_{R,\delta}(x_0), A_{k-1}\right].\end{aligned}\end{equation}
Finally, since $\theta\in V_{\nicefrac{R}{2},\delta}(x_0)\subseteq V_{R,\delta}(x_0)$ implies that $\mathbb{P}(A_0)=1$, it holds that
\begin{equation}\label{tan_7}\mathbb{P}\left[A_{n+1}\right]\geq \prod_{k=1}^{n+1}\left(1-\frac{c}{Mk^{2\rho}}\right)_+-\sum_{k=1}^{n+1}\frac{c}{Mk^\rho}-\sum_{k=1}^{n+1}\mathbb{P}\left[\mathbf{d}(\Theta^{M,r}_{k,\theta},\mathcal{M}\cap U)<\delta, \Theta^{M,r}_{k,\theta}\notin V_{R,\delta}(x_0), A_{k-1}\right],\end{equation}
which completes the induction step, and the proof of \eqref{tan_7}.  It remains only to estimate the final term on the righthand side of inequality \eqref{tan_7}.  The definition of the events $A_m$, $m\in\mathbb{N}_0$, implies that
\begin{equation}\{\mathbf{d}(\Theta^{M,r}_{k,\theta},\mathcal{M}\cap U)<\delta, \Theta^{M,r}_{k,\theta}\notin V_{R,\delta}(x_0), A_{k-1}\}\subseteq\Omega, \;k\in\mathbb{N},\;\textrm{are disjoint events.}\end{equation}
Therefore, it holds that
\begin{equation}\label{tan_9}\begin{aligned} & \sum_{k=1}^n \mathbb{P}\left[\mathbf{d}(\Theta^{M,r}_{k,\theta},\mathcal{M}\cap U)<\delta, \Theta^{M,r}_{k,\theta}\notin V_{R,\delta}(x_0), A_{k-1}\right] \\ & =\mathbb{P}\left[\coprod_{k=1}^n\{\mathbf{d}(\Theta^{M,r}_{k,\theta},\mathcal{M}\cap U)<\delta, \Theta^{M,r}_{k,\theta}\notin V_{R,\delta}(x_0), A_{k-1}\}\right].\end{aligned}\end{equation}
Lemma~\ref{lem_tub} proves that
\begin{equation}\label{tan_10}\mathbb{P}\left[\coprod_{k=1}^n\{\mathbf{d}(\Theta^{M,r}_{k,\theta},\mathcal{M}\cap U)<\delta, \Theta^{M,r}_{k,\theta}\notin V_{R,\delta}(x_0), A_{k-1}\}\right]\leq \mathbb{P}\left[\max_{1\leq k\leq n}\abs{\Theta^{M,r}_{k,\theta}-x_0}\mathbf{1}_{A_{k-1}}>R-\delta\right].\end{equation}
Since $\Theta^{M,k}_{0,\theta}\in V_{\nicefrac{R}{2},\delta}(x_0)$, the triangle inequality prove for every  $k\in\{1,2,\ldots,n\}$ that
\begin{equation}\begin{aligned}\abs{\Theta^{M,r}_{k,\theta}-x_0} & \leq  \abs{\Theta^{M,r}_{k,\theta}-\Theta^{M,k}_{0,\theta}}+\abs{\Theta^{M,k}_{0,\theta}-p(\Theta^{M,k}_{0,\theta})}+\abs{p(\Theta^{M,r}_{0,\theta})-x_0} \\ & \leq \abs{\Theta^{M,r}_{k,\theta}-\theta}+\delta+\frac{R}{2}.\end{aligned}\end{equation}
Therefore, for every $k\in\{1,\ldots,n\}$, on the event $\big\{\big|\Theta^{M,r}_{k,\theta}-x_0\big|>R-\delta\big\}$ it holds that
\begin{equation}\frac{R}{2}-2\delta< \big|\Theta^{M,r}_{k,\theta}-\Theta^{M,r}_{0,\theta}\big|.\end{equation}
This implies that
\begin{equation}\label{tan_11} \left\{\max_{1\leq k\leq n}\abs{\Theta^{M,r}_{k,\theta}-x_0}\mathbf{1}_{A_{k-1}}>R-\delta\right\}\subseteq \left\{\max_{1\leq k\leq n}\abs{\Theta^{M,r}_{k,\theta}-\Theta^{M,r}_{0,\theta}}\mathbf{1}_{A_{k-1}}>\frac{R}{2}-2\delta\right\}.\end{equation}
In combination, \eqref{tan_9}, \eqref{tan_10}, and \eqref{tan_11} prove that
\begin{equation}\label{tan_12} \sum_{k=1}^n \mathbb{P}\left[\mathbf{d}(\Theta^{M,r}_{k,\theta},\mathcal{M}\cap U)<\delta, \Theta^{M,r}_{k,\theta}\notin V_{R,\delta}(x_0), A_{k-1}\right]\leq \mathbb{P}\left[\max_{1\leq k\leq n}\abs{\Theta^{M,r}_{k,\theta}-\Theta^{M,r}_{0,\theta}}\mathbf{1}_{A_{k-1}}>\frac{R}{2}-2\delta\right].\end{equation}
It follows from Proposition~\ref{ng_tang}, \eqref{tan_12}, and Chebyshev's inequality that there exists $c\in(0,\infty)$ which satisfies that
\begin{equation}\label{tan_14}\sum_{k=1}^n \mathbb{P}\left[\mathbf{d}(\Theta^{M,r}_{k,\theta},\mathcal{M}\cap U)<\delta, \Theta^{M,r}_{k,\theta}\notin V_{R,\delta}(x_0), A_{k-1}\right]\leq \frac{cr\left(1+M^{-\frac{1}{2}}n^{1-\rho}\right)}{\left(\frac{R}{2}-2\delta\right)_+}. \end{equation}
Returning to \eqref{tan_7}, it follows from \eqref{tan_14} that there exists $c\in(0,\infty)$ which satisfies that
\begin{equation}\mathbb{P}[A_n]\geq \prod_{k=1}^n\left(1-\frac{c}{Mk^{2\rho}}\right)_+-cM^{-1}n^{1-\rho}-\frac{cr\left(1+M^{-\frac{1}{2}}n^{1-\rho}\right)}{\left(\frac{R}{2}-2\delta\right)_+},\end{equation}
where we have used the fact that, since $\rho\in(\nicefrac{2}{3},1)$, there exists $c\in(0,\infty)$ which satisfies that
\begin{equation}\sum_{k=1}^nk^{-\rho}\leq cn^{1-\rho}.\end{equation}
This completes the proof of Proposition~\ref{ng_good_set}. \end{proof}

We will now use Proposition~\ref{ng_converge} and Proposition~\ref{ng_good_set} to estimate the probability that SGD of mini-batch size $M\in\mathbb{N}$ converges to within distance $\varepsilon\in(0,1]$ of the manifold of local minima at time $n\in\mathbb{N}$.  In the theorem, we assume that the initial condition $\Theta^{M,r}_0$ is continuous uniformly distributed on a bounded open subset $A\subseteq\mathbb{R}^d$ which satisfies that $\mathcal{M}\cap U\cap A\neq\emptyset$.

\begin{thm}\label{ng_one_path} Let $d\in\mathbb{N}$, $\mathfrak{d}\in\{ 0, 1, \ldots, d - 1 \}$, $\rho\in(\nicefrac{2}{3},1)$, let $\abs{\cdot}\colon\mathbb{R}^d\rightarrow\mathbb{R}$ be the standard norm on $\mathbb{R}^d$, let $U\subseteq\mathbb{R}^d$ be an open set, let $A\subseteq\mathbb{R}^d$ be a bounded open set, let $\lambda\colon\mathcal{B}(\mathbb{R}^d)\rightarrow[0,\infty]$ be the Lebesgue-Borel measure, let $(\Omega,\mathcal{F},\mathbb{P})$ be a probability space, let $(S,\mathcal{S})$ be a measurable space, let $F=(F(\theta,x))_{(\theta,x)\in\mathbb{R}^d\times S}\colon  \mathbb{R}^d\times S\rightarrow\mathbb{R}$ be a measurable function, let $X_{n,m}\colon  \Omega\rightarrow S$, $n,m\in\mathbb{N}$, be i.i.d.\ random variables which satisfy for every $\theta\in\mathbb{R}^d$ that $\mathbb{E}\big[ |F(\theta,X_{1,1})|^2\big]<\infty$, let $f\colon\mathbb{R}^d\rightarrow\mathbb{R}$ be the function which satisfies for every $\theta\in\mathbb{R}^d$ that $f(\theta)=\mathbb{E}\big[F(\theta,X_{1,1})\big]$, let $\mathcal{M}\subseteq\mathbb{R}^d$ satisfy that

\begin{equation}\mathcal{M}=\big\{\theta\in\mathbb{R}^d\colon [f(\theta)=\inf\nolimits_{\vartheta\in\mathbb{R}^d} f(\vartheta)]\big\},\end{equation}
let $\mathbf{d}(\cdot,\mathcal{M}\cap U):\mathbb{R}^d\rightarrow\mathbb{R}$ be the function which satisfies for every $x\in\mathbb{R}^d$ that
\begin{equation} \mathbf{d}(x,\mathcal{M}\cap U)=\inf \left\{\abs{x-y}\colon y\in(\mathcal{M}\cap U)\right\},\end{equation}
let $(\cdot)_+\colon\mathbb{R}\rightarrow\mathbb{R}$ be the function which satisfies for every $x\in\mathbb{R}$ that
\begin{equation} (x)_+=\max(x,0),\end{equation}
assume for every $x\in S$ that $\mathbb{R}^d\ni\theta\mapsto F(\theta,x)\in\mathbb{R}$ is a locally Lipschitz continuous function, assume that $f|_U\colon U\rightarrow\mathbb{R}$ is a three times continuously differentiable function, assume for every non-empty compact set $\mathfrak{C}\subseteq U$ that $\sup\nolimits_{\theta\in \mathfrak{C}}\mathbb{E}\big[|F(\theta,X_{1,1})|^2+|(\nabla_\theta F)(\theta,X_{1,1})|^2\big]<\infty$, assume that $\mathcal{M}\cap U$ is a $\mathfrak{d}$-dimensional $\C^1$-submanifold of $\mathbb{R}^d$, assume that $\mathcal{M}\cap U\cap A\neq\emptyset$, assume for every $\theta\in(\mathcal{M}\cap U)$ that $\rank((\Hess f)(\theta))=d-\mathfrak{d}$, for every $M\in\mathbb{N}$, $r\in(0,\infty)$ let $\Theta^{M,r}_0\colon\Omega\rightarrow\mathbb{R}^d$ be continuous uniformly distributed on $A$, assume for every $M\in\mathbb{N}$, $r\in(0,\infty)$ that $\Theta^{M,r}_0$ and $\big(X_{n,m}\big)_{n,m\in\mathbb{N}}$ are independent, for every $M\in\mathbb{N}$, $r\in(0,\infty)$ let $\Theta^{M,r}_n\colon\Omega\rightarrow\mathbb{R}^d$, $n\in\mathbb{N}$, be random variables which satisfy that
\begin{equation}\Theta^{M,r}_n=\Theta^{M,r}_{n-}-\frac{r}{n^\rho M}\!\left[\sum_{m=1}^M(\nabla_\theta F)(\Theta^{M,r}_{n-1},X_{n,m})\right].\end{equation}
Then for every $x_0\in(\mathcal{M}\cap U\cap A)$  there exist $R_0,\delta_0,\mathfrak{r},c\in(0,\infty)$ such that for every $R\in(0,R_0]$, $\delta\in(0,\delta_0]$, $r\in(0,\mathfrak{r}]$, $n,M\in\mathbb{N}$, $\varepsilon\in(0,1]$ it holds that
\begin{equation}\label{gss_0}\begin{aligned} &  \mathbb{P}\Big( \mathbf{d}\big(\Theta^{M,r}_n,\mathcal{M}\cap U\big)\geq\varepsilon\Big)\leq \\ &  \frac{\lambda\big(A\backslash V_{\nicefrac{R}{2},\delta}(x_0)\big)}{\lambda(A)} +c\varepsilon^{-2}n^{-\rho}+1- \prod_{k=1}^n\left(1-\frac{c}{Mk^{2\rho}}\right)_++cM^{-1}n^{1-\rho}+\frac{cr\left(1+M^{-\frac{1}{2}}n^{1-\rho}\right)}{\left(\frac{R}{2}-2\delta\right)_+}.\end{aligned} \end{equation}
\end{thm}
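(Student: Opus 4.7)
The plan is to reduce the theorem to Proposition~\ref{ng_converge} and Proposition~\ref{ng_good_set} by conditioning on the initial value of SGD, which is continuous uniformly distributed on $A$ and independent of the sample variables $(X_{n,m})_{n,m\in\mathbb{N}}$. First I would fix $x_0\in\mathcal{M}\cap U\cap A$ and let $R_0,\delta_0,\mathfrak{r}\in(0,\infty)$ be the common constants obtained by intersecting the conclusions of Proposition~\ref{ng_converge} and Proposition~\ref{ng_good_set} at $x_0$. Then for $R\in(0,R_0]$, $\delta\in(0,\delta_0]$, $r\in(0,\mathfrak{r}]$, $n,M\in\mathbb{N}$, the independence of $\Theta^{M,r}_0$ and $(X_{n,m})_{n,m\in\mathbb{N}}$ and Fubini's theorem yield
\begin{equation}
\mathbb{P}\!\left(\dd(\Theta^{M,r}_{n},\mathcal{M}\cap U)\geq\varepsilon\right)=\frac{1}{\lambda(A)}\int_A\mathbb{P}\!\left(\dd(\Theta^{M,r}_{n,\theta},\mathcal{M}\cap U)\geq\varepsilon\right)\dd\theta,
\end{equation}
where $\Theta^{M,r}_{n,\theta}$ is the SGD iterate with deterministic start at $\theta$, as in Proposition~\ref{ng_converge} and Proposition~\ref{ng_good_set}.

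Next I would split the integral over $A$ according to whether $\theta\in V_{\nicefrac{R}{2},\delta}(x_0)$ or not. On the complement, I simply bound the inner probability by $1$, giving the contribution $\lambda(A\setminus V_{\nicefrac{R}{2},\delta}(x_0))/\lambda(A)$ which is the first term on the right-hand side of \eqref{gss_0}. For $\theta\in V_{\nicefrac{R}{2},\delta}(x_0)$, I would use the event $A_{n-1}$ defined in Proposition~\ref{ng_converge} to decompose
\begin{equation}
\mathbb{P}\!\left(\dd(\Theta^{M,r}_{n,\theta},\mathcal{M}\cap U)\geq\varepsilon\right)\leq \mathbb{P}\!\left(\dd(\Theta^{M,r}_{n,\theta},\mathcal{M}\cap U)\geq\varepsilon,\ A_{n-1}\right)+\mathbb{P}(A_{n-1}^c).
\end{equation}

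The first term on the right is controlled by Chebyshev's inequality together with Proposition~\ref{ng_converge}: since $\varepsilon\in(0,1]$,
\begin{equation}
\mathbb{P}\!\left(\dd(\Theta^{M,r}_{n,\theta},\mathcal{M}\cap U)\geq\varepsilon,\ A_{n-1}\right)\leq \frac{1}{\varepsilon^2}\,\mathbb{E}\!\left[\big(\dd(\Theta^{M,r}_{n,\theta},\mathcal{M}\cap U)\wedge 1\big)^2\mathbf{1}_{n-1}\right]\leq c\varepsilon^{-2}n^{-\rho},
\end{equation}
which produces the $c\varepsilon^{-2}n^{-\rho}$ term. The second term is controlled directly by Proposition~\ref{ng_good_set}, which provides the remaining three terms (up to absorbing $n-1$ into $n$ at the cost of enlarging the constant, which is harmless since $\rho\in(\nicefrac{2}{3},1)$).

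Integrating these pointwise estimates over $V_{\nicefrac{R}{2},\delta}(x_0)\cap A$, bounding the resulting ratio $\lambda(V_{\nicefrac{R}{2},\delta}(x_0)\cap A)/\lambda(A)$ by $1$, and adding the excess-probability contribution from $A\setminus V_{\nicefrac{R}{2},\delta}(x_0)$ yields the estimate \eqref{gss_0}. I do not expect a serious obstacle here: the substance is entirely packaged in Proposition~\ref{ng_converge} and Proposition~\ref{ng_good_set}, and the only real care is in (i) verifying that the constants $R_0,\delta_0,\mathfrak{r}$ from the two propositions can be taken uniform in the single point $x_0$, and (ii) handling the technical nuisance that $\Theta^{M,r}_{n,\theta}$ in this theorem is built from the same $(X_{n,m})$ as the distributionally equivalent process in the propositions, for which the independence assumption on $\Theta^{M,r}_0$ and $(X_{n,m})_{n,m\in\mathbb{N}}$ and Fubini's theorem are precisely what is needed.
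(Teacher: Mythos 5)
Your proposal is correct and follows essentially the same route as the paper's proof: the same decomposition according to whether the initial value lies in $V_{\nicefrac{R}{2},\delta}(x_0)$, the same further split along the event $A_{n-1}$, with Proposition~\ref{ng_converge} plus Chebyshev handling the convergent part and Proposition~\ref{ng_good_set} handling the escape probability. The only cosmetic difference is that you integrate the pointwise-in-$\theta$ bounds over $A$ via Fubini, whereas the paper bounds the conditioned term by a supremum over $\theta\in V_{\nicefrac{R}{2},\delta}(x_0)$; these are the same argument.
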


\begin{proof}[Proof of Theorem~\ref{ng_one_path}]  Let $x_0\in(\mathcal{M}\cap U\cap A)$.  Since $U\subseteq\mathbb{R}^d$ is open, fix $V\in\Proj(x_0)$ (cf. Definition~\ref{define_proj}) which satisfies that $V\subseteq U$.  Fix $R_0,\delta_0\in(0,\infty)$ that satisfy the conclusion of Proposition~\ref{cts_prop_tub} for this set $V$.  Fix $ \mathfrak{r}\in(0,\infty)$ that satisfies the conclusions of Lemma~\ref{lem_projection} and Proposition~\ref{ng_good_set}.  Let $R\in(0,R_0]$, $\delta\in(0,\delta_0]$, $r\in(0,\mathfrak{r}]$, $M\in\mathbb{N}$.  As in Proposition~\ref{ng_converge}, let $\nabla_\theta F^{M,n}\colon\mathbb{R}^d\times\Omega\rightarrow\mathbb{R}^d$, $n\in\mathbb{N}$, be the functions which satisfy for every $(\theta,\omega)\in\mathbb{R}^d\times\Omega$ that
\begin{equation}\nabla_\theta F^{M,n}(\theta)=\nabla_\theta F^{M,n}(\theta,\omega)=\frac{1}{M}\sum_{m=1}^M(\nabla_\theta F)(\theta,X_{n,m}(\omega)).\end{equation}
For every $\theta\in\mathbb{R}^d$ let $\Theta^{M,r}_{0,\theta}\colon\Omega\rightarrow\mathbb{R}^d$ satisfy for every $\omega\in\Omega$ that $\Theta^{M,r}_{0,\theta}(\omega)=\theta$ and for every $n\in\mathbb{N}$ let $\Theta^{M,r}_{n,\theta}\colon\Omega\rightarrow\mathbb{R}^d$ satisfy that
\begin{equation}\Theta^{M,r}_{n,\theta}=\Theta^{M,r}_{n-1,\theta}-\frac{r}{n^\rho}\nabla_\theta F^{M,n}(\Theta^{M,r}_{n-1,\theta}).\end{equation}
Let $\Theta^{M,r}_0\colon\Omega\rightarrow\mathbb{R}^d$ be a random variable which is continuous uniformly distributed on $A$, assume that $\Theta^{M,r}_0$ and $(X_{n,m})_{n,m\in\mathbb{N}}$ are independent, and for every $n\in\mathbb{N}$ let $\Theta^{M,r}_n\colon\Omega\rightarrow\mathbb{R}^d$ satisfy that $\Theta^{M,r}_n=\Theta^{M,r}_{n,\Theta^{M,r}_0}$.  
Let $n\in\mathbb{N}$, $\varepsilon\in(0,1]$.  It holds that
\begin{equation}\label{gss_1}\begin{aligned} \mathbb{P}\Big(  \mathbf{d}\big(\Theta^{M,r}_n,\mathcal{M}\cap U\big)\geq \varepsilon\Big) & =   \mathbb{P}\Big(  \mathbf{d}\big(\Theta^{M,r}_n,\mathcal{M}\cap U\big)\geq\varepsilon, \Theta^{M,r}_0\in V_{\nicefrac{R}{2},\delta}(x_0)\Big) \\ &\quad + \mathbb{P}\Big(  \mathbf{d}\big(\Theta^{M,r}_n,\mathcal{M}\cap U\big)\geq\varepsilon, \Theta^{M,r}_0\notin V_{\nicefrac{R}{2},\delta}(x_0)\Big).\end{aligned}\end{equation}
For the second term on the righthand side of \eqref{gss_0}, it follows from the continuous uniform distribution of $\Theta^{M,r}_0$ on $A$ that
\begin{equation}\label{gss_2} \mathbb{P}\Big(   \mathbf{d}\big(\Theta^{M,r}_n,\mathcal{M}\cap U\big) \geq\varepsilon, \Theta^{M,r}_0\notin V_{\nicefrac{R}{2},\delta}(x_0)\Big)\leq \frac{\lambda\big(A\backslash V_{\nicefrac{R}{2},\delta}(x_0)\big)}{\lambda(A)}.\end{equation}
We will now estimate the first term on the righthand side of \eqref{gss_1}.  For every $m\in\mathbb{N}_0$, $\theta\in\mathbb{R}^d$ let $A_{m,\theta}\subseteq\Omega$ be the event which satisfies that that
\begin{equation}A_{m,\theta}=\Big\{\forall\; k\in\{0,\ldots,m\}\;\Theta^{M,r}_{k,\theta}\in V_{R,\delta}(x_0)\; \Big\},\end{equation}
and for every $m\in\mathbb{N}_0$ let $A_m\in\mathcal{F}$ satisfy that
\begin{equation}A_m=\Big\{\forall\; k\in\{0,\ldots,m\}\;\Theta^{M,r}_k\in V_{R,\delta}(x_0)\Big\}.\end{equation}
It holds that
\begin{equation}\label{gss_3}\begin{aligned} & \mathbb{P}\Big(  \mathbf{d}\big(\Theta^{M,r}_n,\mathcal{M}\cap U\big)\geq\varepsilon, \Theta^{M,r}_0\in V_{\nicefrac{R}{2},\delta}(x_0) \Big) \\ & =  \mathbb{P}\Big(  \mathbf{d}\big(\Theta^{M,r}_n,\mathcal{M}\cap U\big) \geq\varepsilon, \Theta^{M,r}_0\in V_{\nicefrac{R}{2},\delta}(x_0), A_{n-1}\Big) \\ & \quad + \mathbb{P}\Big(  \mathbf{d}\big(\Theta^{M,r}_n,\mathcal{M}\cap U\big) \geq\varepsilon, \Theta^{M,r}_0\in V_{\nicefrac{R}{2},\delta}(x_0), \Omega\backslash A_{n-1}\Big).\end{aligned}\end{equation}
For the second term on the righthand side of \eqref{gss_3}, it follows from Proposition~\ref{ng_good_set} that there exists $c\in(0,\infty)$ such that
\begin{equation}\label{gss_4} \begin{aligned} &  \mathbb{P}\Big(  \mathbf{d}\big(\Theta^{M,r}_n,\mathcal{M}\cap U\big) \geq\varepsilon, \Theta^{M,r}_0\notin V_{\nicefrac{R}{2},\delta}(x_0), \Omega\backslash A_{n-1}\Big) \\ & \leq 1- \prod_{k=1}^n\left(1-\frac{c}{Mk^{2\rho}}\right)_++cM^{-1}n^{1-\rho}+\frac{cr\left(1+M^{-\frac{1}{2}}n^{1-\rho}\right)}{\left(\frac{R}{2}-2\delta\right)_+},\end{aligned}\end{equation}
where we have used the fact that $\rho\in(\nicefrac{2}{3},1)$ implies that there exists $c\in(0,\infty)$ that satisfies for every $n\in\{2,3,\ldots\}$ that $n^{1-\rho}\leq c(n-1)^{1-\rho}$.  For the first term on the righthand side of \eqref{gss_3}, since the random variables $\Theta^{M,r}_0$ and $\big(X_{n,m}\big)_{n,m\in\mathbb{N}}$ are independent, it holds that
\begin{equation}\begin{aligned}\label{gss_5} & \mathbb{P}\Big(  \mathbf{d}\big(\Theta^{M,r}_n,\mathcal{M}\cap U\big) \geq\varepsilon, \Theta^{M,r}_0\in V_{\nicefrac{R}{2},\delta}(x_0), A_{n-1}\Big) \\ & \leq \frac{\lambda\big(V_{\nicefrac{R}{2},\delta}(x_0)\cap A\big)}{\lambda(A)}\sup_{\theta\in V_{\nicefrac{R}{2},\delta}(x_0)}\mathbb{P}\Big( \mathbf{d}\big(\Theta^{M,r}_{n,\theta},\mathcal{M}\cap U\big) \geq\varepsilon, A_{n-1,\theta}\Big).\end{aligned}\end{equation}
Proposition~\ref{ng_converge} and Chebyshev's inequality prove that there exists $c\in(0,\infty)$ such that for every $\theta\in V_{\nicefrac{R}{2},\delta}(x_0)$ it holds that
\begin{equation}\label{gss_6}\mathbb{P}\Big( \mathbf{d}\big(\Theta^{M,r}_{n,\theta},\mathcal{M}\cap U\big) \geq\varepsilon, A_{n-1,\theta}\Big)\leq  \varepsilon^{-2}\mathbb{E}\Big[\Big(\mathbf{d}\big(\Theta^{M,r}_{n,\theta},\mathcal{M}\cap U\big)\wedge 1\Big)^2\mathbf{1}_{A_{n-1,\theta}}\Big]\leq c \varepsilon^{-2}n^{-\rho}.\end{equation}
In combination \eqref{gss_5} and \eqref{gss_6} prove that there exists $c\in(0,\infty)$ such that
\begin{equation}\label{gss_7} \mathbb{P}\Big(  \mathbf{d}\big(\Theta^{M,r}_n,\mathcal{M}\cap U\big) \geq\varepsilon, \Theta^{M,r}_0\in V_{\nicefrac{R}{2},\delta}(x_0), A_{n-1}\Big)\leq c\varepsilon^{-2}n^{-\rho}.\end{equation}
Returning to \eqref{gss_3}, it follows from \eqref{gss_4} and \eqref{gss_7} that there exists $c\in(0,\infty)$ such that
\begin{equation}\label{gss_8}\begin{aligned} & \mathbb{P}\Big(  \mathbf{d}\big(\Theta^{M,r}_n,\mathcal{M}\cap U\big)\geq\varepsilon, \Theta^{M,r}_0\in V_{\nicefrac{R}{2},\delta}(x_0) \Big) \\ & \leq c\varepsilon^{-2}n^{-\rho}+1- \prod_{k=1}^n\left(1-\frac{c}{Mk^{2\rho}}\right)_++cM^{-1}n^{1-\rho}+\frac{cr\left(1+M^{-\frac{1}{2}}n^{1-\rho}\right)}{\left(\frac{R}{2}-2\delta\right)_+}.\end{aligned}\end{equation}
Returning finally to \eqref{gss_1}, it follows from \eqref{gss_2} and \eqref{gss_8} that there exists $c\in(0,\infty)$ such that
\begin{equation}\label{gss_9}\begin{aligned} &  \mathbb{P}\Big(  \mathbf{d}\big(\Theta^{M,r}_n,\mathcal{M}\cap U\big)\geq\varepsilon\Big) \leq \\ &  \frac{\lambda\big(A\backslash V_{\nicefrac{R}{2},\delta}(x_0)\big)}{\lambda(A)} +c\varepsilon^{-2}n^{-\rho}+1- \prod_{k=1}^n\left(1-\frac{c}{Mk^{2\rho}}\right)_++cM^{-1}n^{1-\rho}+\frac{cr\left(1+M^{-\frac{1}{2}}n^{1-\rho}\right)}{\left(\frac{R}{2}-2\delta\right)_+},\end{aligned} \end{equation}
which completes the proof of Theorem~\ref{ng_one_path}.  \end{proof}

The next corollary estimates the probability that $K\in\mathbb{N}$ independent samples of SGD with mini-batch size $M\in\mathbb{N}$ fail to to converge to within distance $\varepsilon\in(0,1]$ of the manifold of local minima $\mathcal{M}\cap U$ at time $n\in\mathbb{N}$.  The proof is a straightforward consequence of Theorem~\ref{ng_one_path} and the independence of the random variables.

 \begin{cor}\label{cor_ng_one_path}  Let $d\in\mathbb{N}$, $\mathfrak{d}\in\{ 0, 1, \ldots, d - 1 \}$, $\rho\in(\nicefrac{2}{3},1)$, let $\abs{\cdot}\colon\mathbb{R}^d\rightarrow\mathbb{R}$ be the standard norm on $\mathbb{R}^d$, let $U\subseteq\mathbb{R}^d$ be an open set, let $A\subseteq\mathbb{R}^d$ be a bounded open set, let $(\Omega,\mathcal{F},\mathbb{P})$ be a probability space, let $(S,\mathcal{S})$ be a measurable space, let $F=(F(\theta,x))_{(\theta,x)\in\mathbb{R}^d\times S}\colon  \mathbb{R}^d\times S\rightarrow\mathbb{R}$ be a measurable function, let $X_{n,m}\colon  \Omega\rightarrow S$, $n,m\in\mathbb{N}$, be i.i.d.\ random variables which satisfy for every $\theta\in\mathbb{R}^d$ that $\mathbb{E}\big[ |F(\theta,X_{1,1})|^2\big]<\infty$, let $f\colon\mathbb{R}^d\rightarrow\mathbb{R}$ be the function which satisfies for every $\theta\in\mathbb{R}^d$ that $f(\theta)=\mathbb{E}\big[F(\theta,X_{1,1})\big]$, let $\mathcal{M}\subseteq\mathbb{R}^d$ satisfy that
\begin{equation}\mathcal{M}=\big\{\theta\in\mathbb{R}^d\colon [f(\theta)=\inf\nolimits_{\vartheta\in\mathbb{R}^d} f(\vartheta)]\big\},\end{equation}
let $\mathbf{d}(\cdot,\mathcal{M}\cap U):\mathbb{R}^d\rightarrow\mathbb{R}$ be the function which satisfies for every $x\in\mathbb{R}^d$ that
\begin{equation} \mathbf{d}(x,\mathcal{M}\cap U)=\inf \left\{\abs{x-y}\colon y\in(\mathcal{M}\cap U)\right\},\end{equation}
let $(\cdot)_+\colon\mathbb{R}\rightarrow\mathbb{R}$ be the function which satisfies for every $x\in\mathbb{R}$ that
\begin{equation} (x)_+=\max(x,0),\end{equation}
assume for every $x\in S$ that $\mathbb{R}^d\ni\theta\mapsto F(\theta,x)\in\mathbb{R}$ is a locally Lipschitz continuous function, assume that $f|_U\colon U\rightarrow\mathbb{R}$ is a three times continuously differentiable function, assume for every non-empty compact set $\mathfrak{C}\subseteq U$ that $\sup\nolimits_{\theta\in \mathfrak{C}}\mathbb{E}\big[|F(\theta,X_{1,1})|^2+|(\nabla_\theta F)(\theta,X_{1,1})|^2\big]<\infty$, assume that $\mathcal{M}\cap U$ is a $\mathfrak{d}$-dimensional $\C^1$-submanifold of $\mathbb{R}^d$, assume that $\mathcal{M}\cap U\cap A\neq\emptyset$, assume for every $\theta\in(\mathcal{M}\cap U)$ that $\rank((\Hess f)(\theta))=d-\mathfrak{d}$, for every $n\in\mathbb{N}_0$, $M\in\mathbb{N}$, $r\in(0,\infty)$ let $\Theta^{k,M,r}_{n}\colon\Omega\rightarrow\mathbb{R}^d$, $k\in\mathbb{N}$, be i.i.d.\ random variables, assume for every $M\in\mathbb{N}$, $r\in(0,\infty)$ that $\Theta^{1,M,r}_{0}$ is continuous uniformly distributed on $A$, assume for every $M\in\mathbb{N}$, $r\in(0,\infty)$ that $\Theta^{M,r}_0$ and $\big(X_{n,m}\big)_{n,m\in\mathbb{N}}$ are independent, and assume for every $n,M\in\mathbb{N}$, $r\in(0,\infty)$ that
\begin{equation}\Theta^{1,M,r}_{n}=\Theta^{1,M,r}_{n-1}-\frac{r}{n^\rho M}\!\left[\sum_{m=1}^M(\nabla_\theta F)(\Theta^{1,M,r}_{n-1},X_{n,m})\right].\end{equation}
Then for every $x_0\in(\mathcal{M}\cap U\cap A)$  there exist $R_0,\delta_0,\mathfrak{r},c\in(0,\infty)$ such that for every $R\in(0,R_0]$, $\delta\in(0,\delta_0]$, $r\in(0,\mathfrak{r}]$, $n,M,K\in\mathbb{N}$, $\varepsilon\in(0,1]$ it holds that
\begin{equation}\begin{aligned}\label{mp_0} & \mathbb{P}\Big( \min_{k\in\{1,2,\ldots,K\}}\mathbf{d}(\Theta^{k,M,r}_n,\mathcal{M}\cap U)\geq\varepsilon\Big)\leq   \\ & \left(\frac{\lambda\big(A\backslash V_{\nicefrac{R}{2},\delta}(x_0)\big)}{\lambda(A)} +c\varepsilon^{-2}n^{-\rho}+1- \prod_{k=1}^n\left(1-\frac{c}{Mk^{2\rho}}\right)_++cM^{-1}n^{1-\rho}+\frac{cr\left(1+M^{-\frac{1}{2}}n^{1-\rho}\right)}{\left(\frac{R}{2}-2\delta\right)_+}\right)^K.\end{aligned}\end{equation}
\end{cor}

\begin{proof}[Proof of Corollary~\ref{cor_ng_one_path}] Let $x_0\in(\mathcal{M}\cap U\cap A)$.  Since $U\subseteq\mathbb{R}^d$ is open, fix $V\in\Proj(x_0)$ (cf. Definition~\ref{define_proj}) which satisfies that $V\subseteq U$.  Fix $R_0,\delta_0\in(0,\infty)$ which satisfy the conclusion of Proposition~\ref{cts_prop_tub} for this set $V$.  Fix $ \mathfrak{r}\in(0,\infty)$ which satisfy the conclusions of Lemma~\ref{lem_projection} and Proposition~\ref{ng_good_set}.  Let $R\in(0,R_0]$, $\delta\in(0,\delta_0]$, $r\in(0,\mathfrak{r}]$, $n,M,K\in\mathbb{N}$.  Since the $\Theta^{k,M,r}_n$, $k\in\mathbb{N}$, are i.i.d.\ it holds that
\begin{equation}\label{mp_1}\begin{aligned} \mathbb{P}\Big( \min_{k\in\{1,2,\ldots,K\}}\mathbf{d}(\Theta^{k,M,r}_n,\mathcal{M}\cap U)\geq\varepsilon\Big) & = \prod_{k=1}^K\mathbb{P}\Big( \mathbf{d}(\Theta^{k,M,r}_n,\mathcal{M}\cap U)\geq\varepsilon\Big) \\ & =  \mathbb{P}\Big( \mathbf{d}(\Theta^{1,M,r}_n,\mathcal{M}\cap U)\geq\varepsilon\Big)^K.\end{aligned}\end{equation}
Theorem~\ref{ng_one_path} and \eqref{mp_1} prove estimate \eqref{mp_0}, which completes the proof of Corollary~\ref{cor_ng_one_path}.  \end{proof}

The following corollary translates the convergence of $\Theta^{k,M,r}_n$, $k\in\{1,2,\ldots,K\}$, to the local manifold of minima $\mathcal{M}\cap U$ into a statement concerning the minimization of the objective function.  The proof is a consequence of Corollary~\ref{cor_ng_one_path} and the local regularity of the objective function.

\begin{cor}\label{obj_ng_one_path} Let $d\in\mathbb{N}$, $\mathfrak{d}\in\{ 0, 1, \ldots, d - 1 \}$, $\rho\in(\nicefrac{2}{3},1)$, let $\abs{\cdot}\colon\mathbb{R}^d\rightarrow\mathbb{R}$ be the standard norm on $\mathbb{R}^d$, let $U\subseteq\mathbb{R}^d$ be an open set, let $A\subseteq\mathbb{R}^d$ be a bounded open set, let $(\Omega,\mathcal{F},\mathbb{P})$ be a probability space, let $(S,\mathcal{S})$ be a measurable space, let $F=(F(\theta,x))_{(\theta,x)\in\mathbb{R}^d\times S}\colon  \mathbb{R}^d\times S\rightarrow\mathbb{R}$ be a measurable function, let $X_{n,m}\colon  \Omega\rightarrow S$, $n,m\in\mathbb{N}$, be i.i.d.\ random variables which satisfy for every $\theta\in\mathbb{R}^d$ that $\mathbb{E}\big[ |F(\theta,X_{1,1})|^2\big]<\infty$, let $f\colon\mathbb{R}^d\rightarrow\mathbb{R}$ be the function which satisfies for every $\theta\in\mathbb{R}^d$ that $f(\theta)=\mathbb{E}\big[F(\theta,X_{1,1})\big]$, let $\mathcal{M}\subseteq\mathbb{R}^d$ satisfy that
\begin{equation}\mathcal{M}=\big\{\theta\in\mathbb{R}^d\colon [f(\theta)=\inf\nolimits_{\vartheta\in\mathbb{R}^d} f(\vartheta)]\big\},\end{equation}
let $(\cdot)_+\colon\mathbb{R}\rightarrow\mathbb{R}$ be the function which satisfies for every $x\in\mathbb{R}$ that
\begin{equation} (x)_+=\max(x,0),\end{equation}
assume for every $x\in S$ that $\mathbb{R}^d\ni\theta\mapsto F(\theta,x)\in\mathbb{R}$ is a locally Lipschitz continuous function, assume that $f|_U\colon U\rightarrow\mathbb{R}$ is a three times continuously differentiable function, assume for every non-empty compact set $\mathfrak{C}\subseteq U$ that $\sup\nolimits_{\theta\in \mathfrak{C}}\mathbb{E}\big[|F(\theta,X_{1,1})|^2+|(\nabla_\theta F)(\theta,X_{1,1})|^2\big]<\infty$, assume that $\mathcal{M}\cap U$ is a $\mathfrak{d}$-dimensional $\C^1$-submanifold of $\mathbb{R}^d$, assume that $\mathcal{M}\cap U\cap A\neq\emptyset$, assume for every $\theta\in(\mathcal{M}\cap U)$ that $\rank((\Hess f)(\theta))=d-\mathfrak{d}$, for every $n\in\mathbb{N}_0$, $M\in\mathbb{N}$, $r\in(0,\infty)$ let $\Theta^{k,M,r}_{n}\colon\Omega\rightarrow\mathbb{R}^d$, $k\in\mathbb{N}$, be i.i.d.\ random variables, assume for every $M\in\mathbb{N}$, $r\in(0,\infty)$ that $\Theta^{1,M,r}_{0}$ is continuous uniformly distributed on $A$, assume for every $M\in\mathbb{N}$, $r\in(0,\infty)$ that $\Theta^{M,r}_0$ and $\big(X_{n,m}\big)_{n,m\in\mathbb{N}}$ are independent, and assume for every $n,M\in\mathbb{N}$, $r\in(0,\infty)$ that
\begin{equation}\Theta^{1,M,r}_{n}=\Theta^{1,M,r}_{n-1}-\frac{r}{n^\rho M}\!\left[\sum_{m=1}^M(\nabla_\theta F)(\Theta^{1,M,r}_{n-1},X_{n,m})\right].\end{equation}
Then for every $x_0\in(\mathcal{M}\cap U\cap A)$  there exist $R_0,\delta_0,\mathfrak{r},c\in(0,\infty)$ such that for every $R\in(0,R_0]$, $\delta\in(0,\delta_0]$, $r\in(0,\mathfrak{r}]$, $n,M,K\in\mathbb{N}$, $\varepsilon\in(0,1]$ it holds that
\begin{equation}\begin{aligned}& \mathbb{P}\Big( \Big[\big[\min_{k\in\{1,2,\ldots,K\}}f(\Theta^{k,M,r}_n)\big]-\inf\nolimits_{\theta\in\mathbb{R}^d}f(\theta)\Big]\geq\varepsilon\Big)\leq   \\ & \left(\frac{\lambda\big(A\backslash V_{\nicefrac{R}{2},\delta}(x_0)\big)}{\lambda(A)} +c\varepsilon^{-2}n^{-\rho}+1- \prod_{k=1}^n\left(1-\frac{c}{Mk^{2\rho}}\right)_++cM^{-1}n^{1-\rho}+\frac{cr\left(1+M^{-\frac{1}{2}}n^{1-\rho}\right)}{\left(\frac{R}{2}-2\delta\right)_+}\right)^K.\end{aligned}\end{equation}
\end{cor}

\begin{proof}[Proof of Corollary~\ref{obj_ng_one_path}] The proof is an immediate consequence of Corollary~\ref{cor_ng_one_path} and the local regularity of the objective function.\end{proof}

Under the assumptions and notations of Corollary~\ref{obj_ng_one_path}, since a random variable $\varTheta^{K,M,r}_n\colon\Omega\rightarrow\mathbb{R}^d$ satisfy that
\begin{equation}f(\varTheta^{K,M,r}_n)=\Big[\min_{k\in\{1,2,\ldots,K\}}f(\Theta^{k,M,r}_n)\Big],\end{equation}
is either computationally inefficient or computationally impossible to obtain, we will prove that such a minimizer can be efficiently computed using mini-batch averages.  In the following lemma, we prove that there exists a measurable selection that minimizes a mini-batch approximation.

\begin{lem}\label{lem_measurable_selection}  Let $d\in\mathbb{N}$, let $(\Omega,\mathcal{F},\mathbb{P})$ be a probability space, let $(S,\mathcal{S})$ be a measurable space, let $F\colon\mathbb{R}^d\times S\rightarrow\mathbb{R}$ be a measurable function, let $X_k\colon\Omega\rightarrow S$, $k\in\mathbb{N}$, be i.i.d.\ random variables, and let $\Theta^k\colon\Omega\rightarrow \mathbb{R}^d$, $k\in\mathbb{N}$, be i.i.d. random variables.  Then for every $K,\mathfrak{M}\in\mathbb{N}$ there exists a random variable $\varTheta^{K,\mathfrak{M}}\colon\Omega\rightarrow\mathbb{R}^d$ such that
\begin{equation}\label{measurable_selection_1} \sum_{m=1}^\mathfrak{M}F(\varTheta^{K,\mathfrak{M}},X_m)=\Big[\min_{k\in\{1,2, \ldots,K\}}\Big(\sum_{m=1}^\mathfrak{M}F(\Theta^k, X_m)\Big)\Big].\end{equation}
\end{lem}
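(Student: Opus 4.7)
The plan is to reduce the argmin selection to a finite, index-based tie-breaking procedure, which avoids invoking any general measurable selection theorem. Since the candidate set $\{\Theta^k(\omega)\colon k\in\{1,\ldots,\mathfrak{M}\}\}$ has at most $\mathfrak{M}$ elements, the argmin can be realized by selecting the smallest index $k^\ast(\omega)\in\{1,\ldots,\mathfrak{M}\}$ whose associated objective value is minimal.

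First, for every $k\in\{1,\ldots,\mathfrak{M}\}$ I would introduce the auxiliary real-valued random variable
\begin{equation}
G_k\colon\Omega\rightarrow\mathbb{R},\qquad G_k(\omega)=\sum_{m=1}^{\mathfrak{M}}F\bigl(\Theta^k(\omega),X_m(\omega)\bigr).
\end{equation}
Since $F\colon\mathbb{R}^d\times S\rightarrow\mathbb{R}$ is jointly measurable and the maps $\omega\mapsto(\Theta^k(\omega),X_m(\omega))$ are measurable from $(\Omega,\mathcal{F})$ to $(\mathbb{R}^d\times S,\mathcal{B}(\mathbb{R}^d)\otimes\mathcal{S})$, the composition $\omega\mapsto F(\Theta^k(\omega),X_m(\omega))$ is $\mathcal{F}/\mathcal{B}(\mathbb{R})$-measurable, and hence so is each $G_k$ as a finite sum.

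Next, I would define the index-valued selector $k^\ast\colon\Omega\rightarrow\{1,\ldots,\mathfrak{M}\}$ by
\begin{equation}
k^\ast(\omega)=\min\Bigl\{k\in\{1,\ldots,\mathfrak{M}\}\colon G_k(\omega)=\min\nolimits_{j\in\{1,\ldots,\mathfrak{M}\}}G_j(\omega)\Bigr\}.
\end{equation}
For each $k\in\{1,\ldots,\mathfrak{M}\}$ the event
\begin{equation}
\{k^\ast=k\}=\Bigl(\bigcap_{j=1}^{k-1}\{G_k<G_j\}\Bigr)\cap\Bigl(\bigcap_{j=k+1}^{\mathfrak{M}}\{G_k\leq G_j\}\Bigr)
\end{equation}
lies in $\mathcal{F}$, being a finite intersection of measurable sets, so $k^\ast$ is measurable. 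Finally, I would set
\begin{equation}
\varTheta^{\mathfrak{M}}(\omega)=\sum_{k=1}^{\mathfrak{M}}\mathbf{1}_{\{k^\ast=k\}}(\omega)\,\Theta^k(\omega),
\end{equation}
which is a finite sum of products of $\mathbb{R}^d$-valued measurable random variables with indicator functions of measurable sets, and is therefore itself $\mathcal{F}/\mathcal{B}(\mathbb{R}^d)$-measurable. By construction $\varTheta^{\mathfrak{M}}(\omega)=\Theta^{k^\ast(\omega)}(\omega)$ and $G_{k^\ast(\omega)}(\omega)=\min_{j}G_j(\omega)$, which is exactly the containment \eqref{measurable_selection_1}.

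There is no substantive obstacle here: the assertion is essentially that a finite-index tie-broken argmin is measurable, and the only point requiring care is the joint measurability of $(\theta,x)\mapsto F(\theta,x)$, which is included in the hypotheses and is what guarantees that each composition $G_k$ is measurable. No topological or regularity assumption on $F$ beyond measurability is needed, and the i.i.d.\ hypotheses on $(\Theta^k)$ and $(X_k)$ play no role in the proof of existence of the selection—they will only matter later, when independence is used to convert \eqref{measurable_selection_1} into probability estimates.
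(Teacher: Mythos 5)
Your proposal is correct and follows essentially the same route as the paper: the paper's proof also selects the smallest index $\mathfrak{m}(\omega)$ attaining the minimum of $\theta\mapsto\sum_{m=1}^{\mathfrak{M}}F(\theta,X_m(\omega))$ over the candidates and sets $\varTheta^{\mathfrak{M}}(\omega)=\Theta^{\mathfrak{m}(\omega)}(\omega)$. Your write-up merely makes explicit the measurability details (of each $G_k$, of the events $\{k^\ast=k\}$, and of the indicator-weighted sum) that the paper leaves implicit.
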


\begin{proof}[Proof of Lemma~\ref{lem_measurable_selection}]  Let $K,\mathfrak{M}\in\mathbb{N}$.  Let $\mathfrak{K}\colon\Omega\rightarrow\{1,2,\ldots,\mathfrak{M}\}$ satisfy for every $\omega\in\Omega$ that
\begin{equation}\label{measurable_selection_2} \mathfrak{K}(\omega)=\min\Big\{k\in\{1,2,\ldots,K\}\colon \sum_{m=1}^\mathfrak{M}F(\Theta^k(\omega),X_m)=\Big[\min_{j\in\{1,2,\ldots,\mathfrak{M}\}}\Big(\sum_{m=1}^\mathfrak{M}F(\Theta^j, X_m)\Big)\Big]\Big\}.\end{equation}
Let $\varTheta^{K,\mathfrak{M}}\colon\Omega\rightarrow\mathbb{R}^d$ satisfy for every $\omega\in\Omega$ that
\begin{equation}\label{measurable_selection_3} \varTheta^{K,\mathfrak{M}}(\omega)=\Theta^{\mathfrak{K}(\omega)}(\omega).\end{equation}
It follow from \eqref{measurable_selection_2} and \eqref{measurable_selection_3} that $\Theta^{K,\mathfrak{M}}$ is measurable and satisfies \eqref{measurable_selection_1}, which completes the proof of Lemma~\ref{lem_measurable_selection}.  \end{proof}

In the following theorem, we prove that the minimum appearing on the lefthand side of \eqref{mp_0} can be efficiently computed using mini-batch averages of the type appearing in Lemma~\ref{lem_measurable_selection}.  

 \begin{thm}\label{thm_intro_ng_one_path}  Let $d\in\mathbb{N}$, $\mathfrak{d}\in\{ 0, 1, \ldots, d - 1 \}$, $\rho\in(\nicefrac{2}{3},1)$, let $U\subseteq\mathbb{R}^d$ be an open set, let $A\subseteq\mathbb{R}^d$ be a bounded open set, let $(\Omega,\mathcal{F},\mathbb{P})$ be a probability space, let $(S,\mathcal{S})$ be a measurable space, let $F=(F(\theta,x))_{(\theta,x)\in\mathbb{R}^d\times S}\colon  \mathbb{R}^d\times S\rightarrow\mathbb{R}$ be a measurable function, let $X_{n,m}\colon  \Omega\rightarrow S$, $n,m\in\mathbb{N}$, be i.i.d.\ random variables which satisfy for every $\theta\in\mathbb{R}^d$ that $\mathbb{E}\big[ |F(\theta,X_{1,1})|^2\big]<\infty$, let $f\colon\mathbb{R}^d\rightarrow\mathbb{R}$ be the function which satisfies for every $\theta\in\mathbb{R}^d$ that $f(\theta)=\mathbb{E}\big[F(\theta,X_{1,1})\big]$, let $\mathcal{M}\subseteq\mathbb{R}^d$ satisfy that
\begin{equation}\mathcal{M}=\big\{\theta\in\mathbb{R}^d\colon [f(\theta)=\inf\nolimits_{\vartheta\in\mathbb{R}^d} f(\vartheta)]\big\},\end{equation}
let $(\cdot)_+\colon\mathbb{R}\rightarrow\mathbb{R}$ be the function which satisfies for every $x\in\mathbb{R}$ that
\begin{equation} (x)_+=\max(x,0),\end{equation}
assume for every $x\in S$ that $\mathbb{R}^d\ni\theta\mapsto F(\theta,x)\in\mathbb{R}$ is a locally Lipschitz continuous function, assume that $f|_U\colon U\rightarrow\mathbb{R}$ is a three times continuously differentiable function, assume for every non-empty compact set $\mathfrak{C}\subseteq U$ that $\sup\nolimits_{\theta\in \mathfrak{C}}\mathbb{E}\big[|F(\theta,X_{1,1})|^2+|(\nabla_\theta F)(\theta,X_{1,1})|^2\big]<\infty$, assume that $\mathcal{M}\cap U$ is a $\mathfrak{d}$-dimensional $\C^1$-submanifold of $\mathbb{R}^d$, assume that $\mathcal{M}\cap U\cap A\neq\emptyset$, assume for every $\theta\in(\mathcal{M}\cap U)$ that $\rank((\Hess f)(\theta))=d-\mathfrak{d}$, for every $n\in\mathbb{N}_0$, $M\in\mathbb{N}$, $r\in(0,\infty)$ let $\Theta^{k,M,r}_{n}\colon\Omega\rightarrow\mathbb{R}^d$, $k\in\mathbb{N}$, be i.i.d.\ random variables, assume for every $n,M\in\mathbb{N}$, $r\in(0,\infty)$ that $(\Theta^{k,M,r}_{n-1})_{k\in\{2,3,\ldots\}}$ and $(X_{n,k})_{k\in\mathbb{N}}$ are independent, assume for every $M\in\mathbb{N}$, $r\in(0,\infty)$ that $\Theta^{1,M,r}_{0}$ is continuous uniformly distributed on $A$, assume for every $M\in\mathbb{N}$, $r\in(0,\infty)$ that $\Theta^{M,r}_0$ and $\big(X_{n,m}\big)_{n,m\in\mathbb{N}}$ are independent, assume for every $n,M\in\mathbb{N}$, $r\in(0,\infty)$ that
\begin{equation}\Theta^{1,M,r}_{n}=\Theta^{1,M,r}_{n-1}-\frac{r}{n^\rho M}\!\left[\sum_{m=1}^M(\nabla_\theta F)(\Theta^{1,M,r}_{n-1},X_{n,m})\right],\end{equation}
and for every $n,M,\mathfrak{M},K\in\mathbb{N}$, $r\in(0,\infty)$ let $\varTheta^{K,M,\mathfrak{M},r}_n\colon\Omega\rightarrow\mathbb{R}^d$ be a random variable which satisfies that
\begin{equation}\sum_{m=1}^\mathfrak{M}F(\varTheta^{K,M,\mathfrak{M},r}_n, X_{n+1,m})=\Big[\min_{k\in\{1,2,\ldots,K\}}\Big(\sum_{m=1}^\mathfrak{M}F(\Theta^{k,M,r}_n, X_{n+1,m})\Big)\Big].\end{equation}
Then for every $x_0\in(\mathcal{M}\cap U\cap A)$  there exist $R_0,\delta_0,\mathfrak{r},c\in(0,\infty)$ such that for every $R\in(0,R_0]$, $\delta\in(0,\delta_0]$, $r\in(0,\mathfrak{r}]$, $n,M,\mathfrak{M},K\in\mathbb{N}$, $\varepsilon\in(0,1]$ it holds that
\begin{equation}\begin{aligned} & \mathbb{P}\Big( \Big[f(\varTheta^{K,M,\mathfrak{M},r}_n)-\inf\nolimits_{\theta\in\mathbb{R}^d}f(\theta)\Big]\geq\varepsilon\Big) \leq \frac{cK}{\varepsilon^2\mathfrak{M}} \\ & + \left(\frac{\lambda\big(A\backslash V_{\nicefrac{R}{2},\delta}(x_0)\big)}{\lambda(A)} +c\varepsilon^{-2}n^{-\rho}+1- \prod_{k=1}^n\left(1-\frac{c}{Mk^{2\rho}}\right)_++cM^{-1}n^{1-\rho}+\frac{cr\left(1+M^{-\frac{1}{2}}n^{1-\rho}\right)}{\left(\frac{R}{2}-2\delta\right)_+}\right)^K.\end{aligned}\end{equation}
\end{thm}

\begin{proof}[Proof of Theorem~\ref{thm_intro_ng_one_path}] Let $x_0\in(\mathcal{M}\cap U\cap A)$.  Since $U\subseteq\mathbb{R}^d$ is open, fix  $V\in\Proj(x_0)$ (cf. Definition~\ref{define_proj}) which satisfies that $V\subseteq U$.  Fix $R_0,\delta_0\in(0,\infty)$ which satisfy the conclusion of Proposition~\ref{cts_prop_tub} for this set $V$.  Fix $ \mathfrak{r}\in(0,\infty)$ which satisfy the conclusions of Lemma~\ref{lem_projection} and Proposition~\ref{ng_good_set}.  Let $R\in(0,R_0]$, $\delta\in(0,\delta_0]$, $r\in(0,\mathfrak{r}]$, $n,M,\mathfrak{M},K\in\mathbb{N}$.  For every $i\in\{1,2,\ldots,K\}$ let $B'_i\subseteq\Omega$ satisfy that
\begin{equation}\label{ting_0} B'_i=\Big\{\omega\in\Omega\colon f(\Theta^{i,M,r}_n(\omega))=\Big[\min_{k\in\{1,2,\ldots,K\}}f(\Theta^{k,M,r}_n(\omega))\Big]\Big\},\end{equation}
and let $B_1\subseteq\Omega$ satisfy that $B_1=B'_1$ and for every $i\in\{2,3,\ldots,K\}$ let $B_i\subseteq\Omega$ satisfy that $B_i=B'_i\backslash \cup_{m=1}^{i-1}B_m$.  Since the events $B_i$, $i\in\{1,2,\ldots,K\}$, are disjoint, it holds that
\begin{equation}\label{ting_1}\begin{aligned}  & \mathbb{P}\Big( \Big[f(\varTheta^{K,M,\mathfrak{M},r}_n)-\inf\nolimits_{\theta\in\mathbb{R}^d}f(\theta)\Big]\geq\varepsilon\Big) \\ & =  \sum_{i=1}^K\mathbb{P}\Big( \Big[f(\varTheta^{K,M,\mathfrak{M},r}_n)-\inf\nolimits_{\theta\in\mathbb{R}^d}f(\theta)\Big]\geq\varepsilon, B_i\Big) \\ & =   \sum_{i=1}^K\mathbb{P}\Big( \big[f(\varTheta^{K,M,\mathfrak{M},r}_n)-f(\Theta^{i,M,r}_n)+f(\Theta^{i,M,r}_n)-\inf_{\theta\in\mathbb{R}^d}f(\theta)\big]\geq\varepsilon, B_i\Big) \\ & \leq \mathbb{P}\Big( \Big[\big[\min_{k\in\{1,2,\ldots,K\}}f(\Theta^{k,M,r}_n)\big]-\inf\nolimits_{\theta\in\mathbb{R}^d}f(\theta)\Big]\geq\frac{\varepsilon}{2}\Big)+\sum_{i=1}^K\mathbb{P}\Big( \big[f(\varTheta^{K,M,\mathfrak{M},r}_n)-f(\Theta^{i,M,r}_n)\geq\frac{\varepsilon}{2}, B_i\Big).\end{aligned}\end{equation}
For the first term on the righthand side of \eqref{ting_1}, Corollary~\ref{obj_ng_one_path} proves that there exists $c\in(0,\infty)$ which satisfies that
\begin{equation}\label{ting_2} \begin{aligned} & \mathbb{P}\Big(\Big[\big[\min_{k\in\{1,2,\ldots,K\}}f(\Theta^{k,M,r}_n)\big]-\inf\nolimits_{\theta\in\mathbb{R}^d}f(\theta)\Big]\geq\frac{\varepsilon}{2}\Big) \\ &\leq  \left(\frac{\lambda\big(A\backslash V_{\nicefrac{R}{2},\delta}(x_0)\big)}{\lambda(A)} +c\varepsilon^{-2}n^{-\rho}+1- \prod_{k=1}^n\left(1-\frac{c}{Mk^{2\rho}}\right)_++cM^{-1}n^{1-\rho}+\frac{cr\left(1+M^{-\frac{1}{2}}n^{1-\rho}\right)}{\left(\frac{R}{2}-2\delta\right)_+}\right)^K. \end{aligned}\end{equation}
We will now estimate the second term on the righthand side of \eqref{ting_2}.  Let $\tilde{B}_j\subseteq\Omega$, $j\in\{1,2,\ldots,K\}$, be disjoint events which satisfy that $\Omega=\coprod_{j\in\{1,2,\ldots,K\}}\tilde{B}_j$ and that
\begin{equation}\tilde{B}_j\subseteq \Big\{\omega\in \Omega\colon \sum_{m=1}^\mathfrak{M}F(\varTheta^{K,M,\mathcal{M},r}_n(\omega),X_{n+1,m}(\omega))=\sum_{m=1}^\mathfrak{M}F(\Theta^{j,M,r}_n(\omega),X_{n+1,m}(\omega))\Big\}.\end{equation}
Since the events $\tilde{B}_j$, $j\in\{1,2,\ldots,K\}$, are disjoint, the final term of \eqref{ting_1} satisfies that
\begin{equation}\label{ting_3}\sum_{i=1}^K\mathbb{P}\Big( f(\varTheta^{K,M,\mathfrak{M},r}_n)-f(\Theta^{i,M,r}_n)\geq\frac{\varepsilon}{2}, B_i\Big)=\sum_{i,j=1}^K\mathbb{P}\Big( f(\Theta^{j,M,r}_n)-f(\Theta^{i,M,r}_n)\geq\frac{\varepsilon}{2}, B_i,\tilde{B}_j\Big).\end{equation}
Let $F^{\mathfrak{M},n}\colon\mathbb{R}^d\times\Omega\rightarrow\mathbb{R}$ be the function which satisfies for every $\theta\in\mathbb{R}^d$, $\omega\in\Omega$ that
\begin{equation}F^{\mathfrak{M},n}(\theta,\omega)=\frac{1}{\mathfrak{M}}\sum_{m=1}^\mathfrak{M}F(\theta, X_{n+1,m}(\omega)).\end{equation}
For every $i,j\in\{1,2,\ldots,K\}$, since it holds for every $\omega\in B_i\cap\tilde{B}_j$ that
\begin{equation}F^{\mathfrak{M},n}(\Theta^{j,M,r}_n(\omega),\omega)-F^{\mathfrak{M},n}(\Theta^{i,M,r}_n(\omega),\omega)\leq 0,\end{equation}
it holds for every $i,j\in\{1,2,\ldots,K\}$ that
\begin{equation}\label{ting_4} \begin{aligned} & \mathbb{P}\Big( f(\Theta^{j,M,r}_n)-f(\Theta^{i,M,r}_n)\geq\frac{\varepsilon}{2}, B_i, \tilde{B}_j\Big) \\ & \leq \mathbb{P}\Big( f(\Theta^{j,M,r}_n(\omega))-F^{\mathfrak{M},n}(\Theta^{j,M,r}_n(\omega),\omega)+F^{\mathfrak{M},n}(\Theta^{i,M,r}_n(\omega),\omega)-f(\Theta^{i,M,r}_n(\omega))\geq\frac{\varepsilon}{2}, B_i,\tilde{B}_j\Big) \\ & \leq \mathbb{P}\Big( \Big|f(\Theta^{j,M,r}_n(\omega))-F^{\mathfrak{M},n}(\Theta^{j,M,r}_n(\omega),\omega)\Big|\geq\frac{\varepsilon}{4}, B_i,\tilde{B}_j\Big) \\ & \quad +\mathbb{P}\Big( \Big|f(\Theta^{i,M,r}_n(\omega))-F^{\mathfrak{M},n}(\Theta^{i,M,r}_n(\omega),\omega)\Big|\geq\frac{\varepsilon}{4}, B_i,\tilde{B}_j\Big).\end{aligned}\end{equation}
It follows from \eqref{ting_3} and \eqref{ting_4} that
\begin{equation}\label{ting_5}\begin{aligned} & \sum_{i=1}^K\mathbb{P}\Big( f(\varTheta^{K,M,\mathfrak{M},r}_n)-f(\Theta^{i,M,r}_n)\geq\frac{\varepsilon}{2}, B_i\Big) \\ & \leq \sum_{j=1}^K\mathbb{P}\Big( \Big|f(\Theta^{j,M,r}_n(\omega))-F^{\mathfrak{M},n}(\Theta^{j,M,r}_n(\omega),\omega)\Big|\geq\frac{\varepsilon}{4}, \tilde{B}_j\Big) \\ & \quad +  \sum_{i=1}^K\mathbb{P}\Big( \Big|f(\Theta^{i,M,r}_n(\omega))-F^{\mathfrak{M},n}(\Theta^{i,M,r}_n(\omega),\omega)\Big|\geq\frac{\varepsilon}{4}, B_i\Big). \end{aligned}\end{equation}
For the first term on the righthand side of \eqref{ting_5}, it holds that
\begin{equation}\begin{aligned}& \sum_{j=1}^K\mathbb{P}\Big( \Big|f(\Theta^{j,M,r}_n(\omega))-F^{\mathfrak{M},n}(\Theta^{j,M,r}_n(\omega),\omega)\Big|\geq\frac{\varepsilon}{4}, \tilde{B}_j\Big) \\ & \leq \sum_{j=1}^K\mathbb{P}\Big( \Big|f(\Theta^{j,M,r}_n(\omega))-F^{\mathfrak{M},n}(\Theta^{j,M,r}_n(\omega),\omega)\Big|\geq\frac{\varepsilon}{4}\Big).\end{aligned}\end{equation}
Since the random variables $(\Theta^{k,M,r}_n)_{k\in\mathbb{N}}$ and $(X_{n+1,k})_{k\in\mathbb{N}}$ are independent, since the $(\Theta^{k,M,r}_n)_{k\in\mathbb{N}}$ are identically distributed, and since the distribution of $\Theta^{1,M,r}_n$ has bounded support on $\mathbb{R}^d$, for the distribution $\mu_n$ of $\Theta^{1,M,r}_n$ on $\mathbb{R}^d$, Lemma~\ref{variance_lem}, Chebyshev's inequality, and the definition of $F^{\mathfrak{M},n}$ prove that that there exists $c\in(0,\infty)$ which satisfies for every $j\in\{1,\ldots,K\}$ that
\begin{equation}\begin{aligned}\mathbb{P}\Big( \Big|f(\Theta^{j,M,r}_n(\omega))-F^{\mathfrak{M},n}(\Theta^{j,M,r}_n(\omega),\omega)\Big|\geq\frac{\varepsilon}{4}\Big) & =  \int_{\mathbb{R}^d}\mathbb{P}\Big( \Big|f(\theta)-\frac{1}{\mathfrak{M}}\sum_{m=1}^\mathfrak{M}F(\theta, X_{n+1,m})\Big|\geq\frac{\varepsilon}{4}\Big)\mu_n(\dd\theta) \\ & \leq  \frac{c}{\varepsilon^2\mathfrak{M}}.\end{aligned}\end{equation}
Therefore, it holds that
\begin{equation}\label{ting_6} \sum_{j=1}^K\mathbb{P}\Big( \Big|f(\Theta^{j,M,r}_n(\omega))-F^{\mathfrak{M},n}(\Theta^{j,M,r}_n(\omega),\omega)\Big|\geq\frac{\varepsilon}{4}, \tilde{B}_j\Big)\leq \frac{cK}{\varepsilon^2\mathfrak{M}}.\end{equation}
For the second term on the righthand side of \eqref{ting_5}, it is sufficient to apply the same argument, which proves that there exists $c\in(0,\infty)$ which satisfies that
\begin{equation}\sum_{i=1}^K\mathbb{P}\Big( \Big|f(\Theta^{i,M,r}_n(\omega))-F^{\mathfrak{M},n}(\Theta^{i,M,r}_n(\omega),\omega)\Big|\geq\frac{\varepsilon}{4}, B_i\Big)\leq \frac{cK}{\varepsilon^2\mathfrak{M}}.\end{equation}
Returning to \eqref{ting_3}, it follows from \eqref{ting_5} and \eqref{ting_6} that there exists $c\in(0,\infty)$ which satisfies that
\begin{equation}\label{ting_8}\sum_{i=1}^K\mathbb{P}\Big( f(\varTheta^{K,M,\mathfrak{M},r}_n)-f(\Theta^{i,M,r}_n)\geq\frac{\varepsilon}{2}, B_i\Big)\leq \frac{cK}{\varepsilon^2\mathfrak{M}}.\end{equation}
Returning finally to \eqref{ting_1}, it follows from \eqref{ting_2} and \eqref{ting_8} that there exists $c\in(0,\infty)$ which satisfies that
\begin{equation}\begin{aligned} & \mathbb{P}\Big( \Big[f(\varTheta^{K,M,\mathfrak{M},r}_n)-\inf\nolimits_{\theta\in\mathbb{R}^d}f(\theta)\Big]\geq\varepsilon\Big) \leq \frac{cK}{\varepsilon^2\mathfrak{M}} \\ & +\left(\frac{\lambda\big(A\backslash V_{\nicefrac{R}{2},\delta}(x_0)\big)}{\lambda(A)} +c\varepsilon^{-2}n^{-\rho}+1- \prod_{k=1}^n\left(1-\frac{c}{Mk^{2\rho}}\right)_++cM^{-1}n^{1-\rho}+\frac{cr\left(1+M^{-\frac{1}{2}}n^{1-\rho}\right)}{\left(\frac{R}{2}-2\delta\right)_+}\right)^K,\end{aligned}\end{equation}
which completes the proof of Theorem~\ref{thm_intro_ng_one_path}. \end{proof}

In the final corollary of this section, we will compute the computational efficiency of the algorithm proposed in Theorem~\ref{thm_intro_ng_one_path}.  The constant implicitly depends on the computational cost of computing $F$ and $\nabla_\theta F$ and initializing the random variable $X_{1,1}$, but it does not depend upon the running time $n\in\mathbb{N}$, the sampling size $K\in\mathbb{N}$, or the mini-batch sizes $M,\mathfrak{M}\in\mathbb{N}$.

 \begin{cor}\label{cor_computation}  Let $d\in\mathbb{N}$, $\mathfrak{d}\in\{ 0, 1, \ldots, d - 1 \}$, $\rho\in(\nicefrac{2}{3},1)$, let $U\subseteq\mathbb{R}^d$ be an open set, let $A\subseteq\mathbb{R}^d$ be a bounded open set, let $(\Omega,\mathcal{F},\mathbb{P})$ be a probability space, let $(S,\mathcal{S})$ be a measurable space, let $F=(F(\theta,x))_{(\theta,x)\in\mathbb{R}^d\times S}\colon  \mathbb{R}^d\times S\rightarrow\mathbb{R}$ be a measurable function, let $X_{n,m}\colon  \Omega\rightarrow S$, $n,m\in\mathbb{N}$, be i.i.d.\ random variables which satisfy for every $\theta\in\mathbb{R}^d$ that $\mathbb{E}\big[ |F(\theta,X_{1,1})|^2\big]<\infty$, let $f\colon\mathbb{R}^d\rightarrow\mathbb{R}$ be the function which satisfies for every $\theta\in\mathbb{R}^d$ that $f(\theta)=\mathbb{E}\big[F(\theta,X_{1,1})\big]$, let $\mathcal{M}\subseteq\mathbb{R}^d$ satisfy that
\begin{equation}\mathcal{M}=\big\{\theta\in\mathbb{R}^d\colon [f(\theta)=\inf\nolimits_{\vartheta\in\mathbb{R}^d} f(\vartheta)]\big\},\end{equation}
assume for every $x\in S$ that $\mathbb{R}^d\ni\theta\mapsto F(\theta,x)\in\mathbb{R}$ is a locally Lipschitz continuous function, assume that $f|_U\colon U\rightarrow\mathbb{R}$ is a three times continuously differentiable function, assume for every non-empty compact set $\mathfrak{C}\subseteq U$ that $\sup\nolimits_{\theta\in \mathfrak{C}}\mathbb{E}\big[|F(\theta,X_{1,1})|^2+|(\nabla_\theta F)(\theta,X_{1,1})|^2\big]<\infty$, assume that $\mathcal{M}\cap U$ is a $\mathfrak{d}$-dimensional $\C^1$-submanifold of $\mathbb{R}^d$, assume that $\mathcal{M}\cap U\cap A\neq\emptyset$, assume for every $\theta\in(\mathcal{M}\cap U)$ that $\rank((\Hess f)(\theta))=d-\mathfrak{d}$, for every $n\in\mathbb{N}_0$, $M\in\mathbb{N}$, $r\in(0,\infty)$ let $\Theta^{k,M,r}_{n}\colon\Omega\rightarrow\mathbb{R}^d$, $k\in\mathbb{N}$, be i.i.d.\ random variables, assume for every $n,M\in\mathbb{N}$, $r\in(0,\infty)$ that $(\Theta^{k,M,r}_{n-1})_{k\in\{2,3,\ldots\}}$ and $(X_{n,k})_{k\in\mathbb{N}}$ are independent, assume for every $M\in\mathbb{N}$, $r\in(0,\infty)$ that $\Theta^{1,M,r}_{0}$ is continuous uniformly distributed on $A$, assume for every $M\in\mathbb{N}$, $r\in(0,\infty)$ that $\Theta^{M,r}_0$ and $\big(X_{n,m}\big)_{n,m\in\mathbb{N}}$ are independent, assume for every $n,M\in\mathbb{N}$, $r\in(0,\infty)$ that
\begin{equation}\Theta^{1,M,r}_{n}=\Theta^{1,M,r}_{n-1}-\frac{r}{n^\rho M}\!\left[\sum_{m=1}^M(\nabla_\theta F)(\Theta^{1,M,r}_{n-1},X_{n,m})\right],\end{equation}
and for every $n,M,\mathfrak{M},K\in\mathbb{N}$, $r\in(0,\infty)$ let $\varTheta^{K,M,\mathfrak{M},r}_n\colon\Omega\rightarrow\mathbb{R}^d$ be a random variable which satisfies that
\begin{equation}\sum_{m=1}^\mathfrak{M}F(\varTheta^{K,M,\mathfrak{M},r}_n, X_{n+1,m})=\Big[\min_{k\in\{1,2,\ldots,K\}}\Big(\sum_{m=1}^\mathfrak{M}F(\Theta^{k,M,r}_n, X_{n+1,m})\Big)\Big].\end{equation}
Then for every $x_0\in(\mathcal{M}\cap U\cap A)$ there exist $R_0,\delta_0,\mathfrak{r}\in(0,\infty)$ such that for every $R\in(0,R_0]$, $\delta\in(0,\delta_0]$, $r\in(0,\mathfrak{r}]$ there exist $c_i\in(0,\infty)$, $i\in\{1,2,3,4\}$, such that for every $\varepsilon,\eta\in(0,1]$, for $n(\varepsilon), M(\varepsilon), K(\eta), \mathfrak{M}(\varepsilon,\eta)\in\mathbb{N}$ which satisfy that
\begin{equation} n(\varepsilon)=c_1\varepsilon^{-\nicefrac{2}{\rho}},\;\; M(\varepsilon)=c_2\varepsilon^{-\nicefrac{4}{\rho}+4},\;\; \mathfrak{M}(\varepsilon,\eta)=c_3\varepsilon^{-2}\eta^{-1}\abs{\log(\eta)},\;\;\textrm{and}\;\;K=c_4\abs{\log(\eta)},\end{equation}
it holds that
\begin{equation}\mathbb{P}\Big( \Big[f(\varTheta^{K(\eta),M(\varepsilon),\mathfrak{M}(\varepsilon,\eta),r}_{n(\varepsilon)})-\inf\nolimits_{\theta\in\mathbb{R}^d}f(\theta)\Big]\geq\varepsilon\Big) \leq \eta.\end{equation}
\end{cor}

\begin{proof}[Proof of Corollary~\ref{cor_computation}]  Let $x_0\in(\mathcal{M}\cap U\cap A)$.  Let $R_0,\delta_0,\mathfrak{r}\in(0,\infty)$ satisfy the conclusion of Theorem~\ref{thm_intro_ng_one_path}.  Theorem~\ref{thm_intro_ng_one_path} proves that there exists $\overline{c}\in(0,\infty)$ such that for every $R\in(0,R_0]$, $\delta\in(0,\delta_0]$, $r\in(0,\mathfrak{r}]$, $n,M,\mathfrak{M},K\in\mathbb{N}$, $\varepsilon\in(0,1]$ it holds that
\begin{equation}\label{thc_0}\begin{aligned} & \mathbb{P}\Big( \Big[f(\varTheta^{K,M,\mathfrak{M},r}_n)-\inf\nolimits_{\theta\in\mathbb{R}^d}f(\theta)\Big]\geq\varepsilon\Big) \leq \frac{\overline{c}K}{\varepsilon^2\mathfrak{M}} \\ & +\left(\frac{\lambda\big(A\backslash V_{\nicefrac{R}{2},\delta}(x_0)\big)}{\lambda(A)} +\overline{c}\varepsilon^{-2}n^{-\rho}+1- \prod_{k=1}^n\left(1-\frac{\overline{c}}{Mk^{2\rho}}\right)_++\overline{c}M^{-1}n^{1-\rho}+\frac{\overline{c}r\left(1+M^{-\frac{1}{2}}n^{1-\rho}\right)}{\left(\frac{R}{2}-2\delta\right)_+}\right)^K.\end{aligned}\end{equation}
Fix $\overline{R}\in(0,R_0]$, $\overline{\delta}\in(0,\delta_0]$ which satisfy that
\begin{equation}\label{thc_00} \frac{\overline{R}}{2}-2\overline{\delta}>0.\end{equation}
Since $\mathcal{M}\cap U\cap A\neq\emptyset$, it holds that
\begin{equation}\label{thc_1}\frac{\lambda\big(A\backslash V_{\nicefrac{R}{2},\delta}(x_0)\big)}{\lambda(A)}\in(0,1).\end{equation}
For every $M\in\mathbb{N}$ which satisfies that $M\geq 2\overline{c}$, since $\rho\in(\nicefrac{2}{3},1)$ there exists $c\in(0,\infty)$ which satisfies that
\begin{equation}\log\Big(\prod_{k=1}^n\left(1-\frac{\overline{c}}{Mk^{2\rho}}\right)_+\Big)\geq -\frac{c}{M}\sum_{k=1}^Mk^{-2\rho}\geq -\frac{c}{M},\end{equation}
and therefore for every $M\geq 2\overline{c}$ there exists $c\in(0,\infty)$ which satisfies that
\begin{equation}\label{thc_2}-\prod_{k=1}^n\left(1-\frac{\overline{c}}{Mk^{2\rho}}\right)_+\leq -\exp\Big(-\frac{c}{M}\Big).\end{equation}
It follows from \eqref{thc_00} that there exists $c\in(0,\infty)$ which satisfies that
\begin{equation}\label{thc_3}\overline{c}M^{-1}n^{1-\rho}+\frac{\overline{c}rM^{-\frac{1}{2}}n^{1-\rho}}{\left(\frac{\overline{R}}{2}-2\overline{\delta}\right)_+}\leq cM^{-\frac{1}{2}}n^{1-\rho}.\end{equation}
Returning to \eqref{thc_0}, it follows from \eqref{thc_2} and \eqref{thc_3} that there exists $c\in(0,\infty)$ which satisfies that
\begin{equation}\label{thc_4}\begin{aligned} & \mathbb{P}\Big( \Big[f(\varTheta^{K,M,\mathfrak{M},r}_n)-\inf\nolimits_{\theta\in\mathbb{R}^d}f(\theta)\Big]\geq\varepsilon\Big) \leq \frac{\overline{c}K}{\varepsilon^2\mathfrak{M}} \\ & +\left(\frac{\lambda\big(A\backslash V_{\nicefrac{\overline{R}}{2},\overline{\delta}}(x_0)\big)}{\lambda(A)} +\overline{c}\varepsilon^{-2}n^{-\rho}+1-\exp\Big(-\frac{c}{M}\Big)+cM^{-\frac{1}{2}}n^{1-\rho}+\frac{\overline{c}r}{(\frac{\overline{R}}{2}-2\overline{\delta})_+}\right)^K.\end{aligned}\end{equation}
Let $\eta\in(0,1]$.  It follows from \eqref{thc_00}, \eqref{thc_1} and an explicit computation that there exist $c_i\in(0,\infty)$, $i\in\{1,2,3,4\}$, and $\mathfrak{r}_1\in(0,\mathfrak{r}]$ such that for $n(\varepsilon), M(\varepsilon), \mathfrak{M}(\varepsilon,\eta), K(\eta)\in\mathbb{N}$ which satisfy that
\begin{equation}\label{thc_5} n(\varepsilon)=c_1\varepsilon^{-\nicefrac{2}{\rho}},\; M(\varepsilon)=c_2\varepsilon^{-\nicefrac{4}{\rho}+4},\; \mathfrak{M}(\varepsilon,\eta)=c_3\varepsilon^{-2}\eta^{-1}\abs{\log(\eta)},\;\textrm{and}\;K=c_4\abs{\log(\eta)},\end{equation}
it holds that
\begin{equation}\frac{\overline{c}K}{\varepsilon^2\mathfrak{M}(\epsilon,\eta)}\leq\frac{\eta}{2},\end{equation}
and for every $r\in(0,\mathfrak{r}_1]$ that
\begin{equation}\begin{aligned} & \left(\frac{\lambda\big(A\backslash V_{\nicefrac{\overline{R}}{2},\overline{\delta}}(x_0)\big)}{\lambda(A)} +\overline{c}\varepsilon^{-2}n(\varepsilon)^{-\rho}+1-\exp\Big(-\frac{c}{M(\varepsilon)}\Big)+cM(\varepsilon)^{-\frac{1}{2}}n(\varepsilon)^{1-\rho}+\frac{\overline{c}r}{(\frac{\overline{R}}{2}-2\overline{\delta})_+}\right)^{K(\eta)} \\ & \leq\frac{\eta}{2}.\end{aligned}\end{equation}
Returning to \eqref{thc_4}, it follows for every $r\in(0,\mathfrak{r}_1]$ that
\begin{equation}\mathbb{P}\Big( \Big[f(\varTheta^{K(\eta),M(\varepsilon),\mathfrak{M}(\varepsilon,\eta),r}_{n(\varepsilon)})-\inf\nolimits_{\theta\in\mathbb{R}^d}f(\theta)\Big]\geq\varepsilon\Big) \leq \eta,\end{equation}
which completes the proof of Corollary~\ref{cor_computation}.  \end{proof}

\section{Stochastic gradient descent - The compact case}\label{stoch_discrete}

In this section, we will analyze the converge of SGD to the manifold of local minima under the additional assumption that the manifold of local minima is compact.   The essential difference in this case is that SGD cannot leave a basin of attraction along directions tangential to the manifold.  We first observe the convergence of SGD in directions normal to the manifold.

The following proposition is an immediate consequence of Proposition~\ref{ng_converge} and the compactness of $\mathcal{M}\cap U$, where the essential difference in the compact case is that $R\in(0,\infty)$ can be chosen arbitrarily large.  In particular, by compactness, for every $x_0\in(\mathcal{M}\cap U)$ there exists $R_0\in(0,\infty)$ such that for every $R_1,R_2\in[R_0,\infty)$, $\delta\in(0,\infty)$ it holds that $V_{R_1,\delta}(x_0)=V_{R_2,\delta}(x_0)$.  Furthermore, it follows from Remark~\ref{rho_remark} that the results apply to $\rho\in(0,1)$.

\begin{prop}\label{ng_converge_comp}  Let $d\in\mathbb{N}$, $\mathfrak{d}\in\{ 0, 1, \ldots, d - 1 \}$, $\rho\in(0,1)$, let $\abs{\cdot}\colon\mathbb{R}^d\rightarrow\mathbb{R}$ be the standard norm on $\mathbb{R}^d$, let $U\subseteq\mathbb{R}^d$ be an open set, let $(\Omega,\mathcal{F},\mathbb{P})$ be a probability space, let $(S,\mathcal{S})$ be a measurable space, let $F=(F(\theta,x))_{(\theta,x)\in\mathbb{R}^d\times S}\colon  \mathbb{R}^d\times S\rightarrow\mathbb{R}$ be a measurable function, let $X_{n,m}\colon  \Omega\rightarrow S$, $n,m\in\mathbb{N}$, be i.i.d.\ random variables which satisfy for every $\theta\in\mathbb{R}^d$ that $\mathbb{E}\big[ |F(\theta,X_{1,1})|^2\big]<\infty$, let $f\colon\mathbb{R}^d\rightarrow\mathbb{R}$ be the function which satisfies for every $\theta\in\mathbb{R}^d$ that $f(\theta)=\mathbb{E}\big[F(\theta,X_{1,1})\big]$, let $\mathcal{M}\subseteq\mathbb{R}^d$ satisfy that
\begin{equation}\mathcal{M}=\big\{\theta\in\mathbb{R}^d\colon [f(\theta)=\inf\nolimits_{\vartheta\in\mathbb{R}^d} f(\vartheta)]\big\},\end{equation}
let $\mathbf{d}(\cdot,\mathcal{M}\cap U):\mathbb{R}^d\rightarrow\mathbb{R}$ be the function which satisfies for every $x\in\mathbb{R}^d$ that
\begin{equation} \mathbf{d}(x,\mathcal{M}\cap U)=\inf \left\{\abs{x-y}\colon y\in(\mathcal{M}\cap U)\right\},\end{equation}
assume for every $x\in S$ that $\mathbb{R}^d\ni\theta\mapsto F(\theta,x)\in\mathbb{R}$ is a locally Lipschitz continuous function, assume that $f|_U\colon U\rightarrow\mathbb{R}$ is a three times continuously differentiable function, assume for every non-empty compact set $\mathfrak{C}\subseteq U$ that $\sup\nolimits_{\theta\in \mathfrak{C}}\mathbb{E}\big[|F(\theta,X_{1,1})|^2+|(\nabla_\theta F)(\theta,X_{1,1})|^2\big]<\infty$, assume that $\mathcal{M}\cap U$ is a non-empty compact $\mathfrak{d}$-dimensional $\C^1$-submanifold of $\mathbb{R}^d$, assume for every $\theta\in(\mathcal{M}\cap U)$ that $\rank((\Hess f)(\theta))=d-\mathfrak{d}$, for every $M\in\mathbb{N}$, $r\in(0,\infty)$, $\theta\in\mathbb{R}^d$ let $\Theta^{M,r}_{0,\theta}\in\mathbb{R}^d\colon\Omega\rightarrow\mathbb{R}^d$ satisfy for every $\omega\in\Omega$ that $\theta^{M,r}_{0,\theta}(\omega)=\theta$, for every $n,M\in\mathbb{N}$, $r\in(0,\infty)$, $\theta\in\mathbb{R}^d$ let $\Theta^{M,r}_{n,\theta}\colon\Omega\rightarrow\mathbb{R}^d$ satisfy that
\begin{equation}\Theta^{M,r}_{n,\theta}=\Theta^{M,r}_{n-1,\theta}-\frac{r}{n^\rho M}\!\left[\sum_{m=1}^M(\nabla_\theta F)(\Theta^{M,r}_{n-1,\theta},X_{n,m})\right],\end{equation}
and for every $n,M\in\mathbb{N}$, $r,R,\delta\in(0,\infty)$, $\theta\in\mathbb{R}^d$, $x_0\in(\mathcal{M}\cap U)$ let $A_n(M,r,R,\delta,\theta,x_0)\in\mathcal{F}$ satisfy that
\begin{equation}A_n(M,r,R,\delta,\theta,x_0)=\Big\{\forall\;m\in\{0,\ldots,n\}\;\Theta^{M,r}_{m,\theta}\in V_{R,\delta}(x_0)\Big\}.\end{equation}
Then for every $x_0\in(\mathcal{M}\cap U)$ there exist $\delta_0,\mathfrak{r},c\in(0,\infty)$ such that for every $R\in(0,\infty)$, $\delta\in(0,\delta_0]$, $r\in(0,\mathfrak{r}]$, $n,M\in\mathbb{N}$, $\theta\in V_{R,\delta}(x_0)$ (cf. Definition~\ref{dtbn}) it holds that 
\begin{equation}\left(\mathbb{E}\left[\left(\mathbf{d}(\Theta^{M,r}_{n,\theta},\mathcal{M}\cap U)\wedge 1\right)^2\mathbf{1}_{A_{n-1}}\right]\right)^\frac{1}{2}\leq cn^{-\frac{\rho}{2}}.\end{equation}
\end{prop}

\begin{proof}[Proof of Proposition~\ref{ng_converge_comp}] The proof is an immediate consequence of Proposition~\ref{ng_converge} and the compactness of $\mathcal{M}\cap U$. \end{proof}

We will now obtain a lower bound in probability for the events $A_m$, $m\in\mathbb{N}$.  It follows from Proposition~\ref{ng_good_set} and the compactness of $\mathcal{M}\cap U$ that for every $x_0\in(\mathcal{M}\cap U)$ there exist $\delta_0, \mathfrak{r},c\in(0,\infty)$ such that the conclusion of Proposition~\ref{ng_good_set} is satisfied for every $\delta\in(0,\delta_0]$, $ r\in(0, \mathfrak{r}]$, and $R\in(0,\infty)$ for this constant $c\in(0,\infty)$.  That is, since for every $R_1,R_2\in(0,\infty)$ sufficiently large we have $V_{R_1,\delta}(x_0)=V_{R_2,\delta}(x_0)$, it holds that the constant can be chosen independently of $R\in(0,\infty)$.

The proof of the following proposition is then an immediate consequence of Proposition~\ref{ng_good_set}, after using the fact that the constant $c\in(0,\infty)$ is independent of $R\in(0,\infty)$ and passing to the limit $R\rightarrow\infty$.  The improvement in the estimate, when compared to Proposition~\ref{ng_good_set}, is a result of the fact that SGD cannot leave the basin of attraction along the directions tangential to the manifold.

\begin{prop}\label{ng_good_set_comp}  Let $d\in\mathbb{N}$, $\mathfrak{d}\in\{ 0, 1, \ldots, d - 1 \}$, $\rho\in(0,1)$, let $\abs{\cdot}\colon\mathbb{R}^d\rightarrow\mathbb{R}$ be the standard norm on $\mathbb{R}^d$, let $U\subseteq\mathbb{R}^d$ be an open set, let $(\Omega,\mathcal{F},\mathbb{P})$ be a probability space, let $(S,\mathcal{S})$ be a measurable space, let $F=(F(\theta,x))_{(\theta,x)\in\mathbb{R}^d\times S}\colon  \mathbb{R}^d\times S\rightarrow\mathbb{R}$ be a measurable function, let $X_{n,m}\colon  \Omega\rightarrow S$, $n,m\in\mathbb{N}$, be i.i.d.\ random variables which satisfy for every $\theta\in\mathbb{R}^d$ that $\mathbb{E}\big[ |F(\theta,X_{1,1})|^2\big]<\infty$, let $f\colon\mathbb{R}^d\rightarrow\mathbb{R}$ be the function which satisfies for every $\theta\in\mathbb{R}^d$ that $f(\theta)=\mathbb{E}\big[F(\theta,X_{1,1})\big]$, let $\mathcal{M}\subseteq\mathbb{R}^d$ satisfy that
\begin{equation}\mathcal{M}=\big\{\theta\in\mathbb{R}^d\colon [f(\theta)=\inf\nolimits_{\vartheta\in\mathbb{R}^d} f(\vartheta)]\big\},\end{equation}
let $\mathbf{d}(\cdot,\mathcal{M}\cap U):\mathbb{R}^d\rightarrow\mathbb{R}$ be the function which satisfies for every $x\in\mathbb{R}^d$ that
\begin{equation} \mathbf{d}(x,\mathcal{M}\cap U)=\inf \left\{\abs{x-y}\colon y\in(\mathcal{M}\cap U)\right\},\end{equation}
let $(\cdot)_+\colon\mathbb{R}\rightarrow\mathbb{R}$ be the function which satisfies for every $x\in\mathbb{R}$ that
\begin{equation} (x)_+=\max(x,0),\end{equation}
assume for every $x\in S$ that $\mathbb{R}^d\ni\theta\mapsto F(\theta,x)\in\mathbb{R}$ is a locally Lipschitz continuous function, assume that $f|_U\colon U\rightarrow\mathbb{R}$ is a three times continuously differentiable function, assume for every non-empty compact set $\mathfrak{C}\subseteq U$ that $\sup\nolimits_{\theta\in \mathfrak{C}}\mathbb{E}\big[|F(\theta,X_{1,1})|^2+|(\nabla_\theta F)(\theta,X_{1,1})|^2\big]<\infty$, assume that $\mathcal{M}\cap U$ is a non-empty compact $\mathfrak{d}$-dimensional $\C^1$-submanifold of $\mathbb{R}^d$, assume for every $\theta\in(\mathcal{M}\cap U)$ that $\rank((\Hess f)(\theta))=d-\mathfrak{d}$, for every $M\in\mathbb{N}$, $r\in(0,\infty)$, $\theta\in\mathbb{R}^d$ let $\Theta^{M,r}_{0,\theta}\in\mathbb{R}^d\colon\Omega\rightarrow\mathbb{R}^d$ satisfy for every $\omega\in\Omega$ that $\theta^{M,r}_{0,\theta}(\omega)=\theta$, for every $n,M\in\mathbb{N}$, $r\in(0,\infty)$, $\theta\in\mathbb{R}^d$ let $\Theta^{M,r}_{n,\theta}\colon\Omega\rightarrow\mathbb{R}^d$ satisfy that
\begin{equation}\Theta^{M,r}_{n,\theta}=\Theta^{M,r}_{n-1,\theta}-\frac{r}{n^\rho M}\!\left[\sum_{m=1}^M(\nabla_\theta F)(\Theta^{M,r}_{n-1,\theta},X_{n,m})\right],\end{equation}
and for every $n,M\in\mathbb{N}$, $r,R,\delta\in(0,\infty)$, $\theta\in\mathbb{R}^d$, $x_0\in(\mathcal{M}\cap U)$ let $A_n(M,r,R,\delta,\theta,x_0)\in\mathcal{F}$ satisfy that
\begin{equation}A_n(M,r,R,\delta,\theta,x_0)=\Big\{\forall\;m\in\{0,\ldots,n\}\;\Theta^{M,r}_{m,\theta}\in V_{R,\delta}(x_0)\Big\}.\end{equation}
Then for every $x_0\in(\mathcal{M}\cap U)$ there exist $\delta_0,\mathfrak{r},c\in(0,\infty)$ such that for every $R\in(0,\infty)$, $\delta\in(0,\delta_0]$, $r\in(0,\mathfrak{r}]$, $n,M\in\mathbb{N}$, $\theta\in V_{\nicefrac{R}{2},\delta}(x_0)$ (cf. Definition~\ref{dtbn}) it holds that

\begin{equation}\mathbb{P}[A_n]\geq \prod_{k=1}^n\left(1-\frac{c}{Mk^{2\rho}}\right)_+-cM^{-1}n^{1-\rho}.\end{equation}
\end{prop}

\begin{proof}[Proof of Proposition~\ref{ng_good_set_comp}]The proof is an immediate consequence of Proposition~\ref{ng_good_set} and the compactness of $\mathcal{M}\cap U$.\end{proof}

The following theorem proves the convergence of SGD with initial data sampled from a uniform distribution on a bounded open set $A\subseteq\mathbb{R}^d$ which satisfies that $\mathcal{M}\cap U\cap A\neq\emptyset$.  The proof is an immediate consequence of Theorem~\ref{ng_one_path}, Proposition~\ref{ng_converge_comp}, and Proposition~\ref{ng_good_set_comp}.

\begin{thm}\label{ng_one_path_comp} Let $d\in\mathbb{N}$, $\mathfrak{d}\in\{ 0, 1, \ldots, d - 1 \}$, $\rho\in(0,1)$, let $\abs{\cdot}\colon\mathbb{R}^d\rightarrow\mathbb{R}$ be the standard norm on $\mathbb{R}^d$, let $U\subseteq\mathbb{R}^d$ be an open set, let $A\subseteq\mathbb{R}^d$ be a bounded open set, let $\lambda\colon\mathcal{B}(\mathbb{R}^d)\rightarrow[0,\infty]$ be the Lebesgue-Borel measure, let $(\Omega,\mathcal{F},\mathbb{P})$ be a probability space, let $(S,\mathcal{S})$ be a measurable space, let $F=(F(\theta,x))_{(\theta,x)\in\mathbb{R}^d\times S}\colon  \mathbb{R}^d\times S\rightarrow\mathbb{R}$ be a measurable function, let $X_{n,m}\colon  \Omega\rightarrow S$, $n,m\in\mathbb{N}$, be i.i.d.\ random variables which satisfy for every $\theta\in\mathbb{R}^d$ that $\mathbb{E}\big[ |F(\theta,X_{1,1})|^2\big]<\infty$, let $f\colon\mathbb{R}^d\rightarrow\mathbb{R}$ be the function which satisfies for every $\theta\in\mathbb{R}^d$ that $f(\theta)=\mathbb{E}\big[F(\theta,X_{1,1})\big]$, let $\mathcal{M}\subseteq\mathbb{R}^d$ satisfy that
\begin{equation}\mathcal{M}=\big\{\theta\in\mathbb{R}^d\colon [f(\theta)=\inf\nolimits_{\vartheta\in\mathbb{R}^d} f(\vartheta)]\big\},\end{equation}
let $\mathbf{d}(\cdot,\mathcal{M}\cap U):\mathbb{R}^d\rightarrow\mathbb{R}$ be the function which satisfies for every $x\in\mathbb{R}^d$ that
\begin{equation} \mathbf{d}(x,\mathcal{M}\cap U)=\inf \left\{\abs{x-y}\colon y\in(\mathcal{M}\cap U)\right\},\end{equation}
let $(\cdot)_+\colon\mathbb{R}\rightarrow\mathbb{R}$ be the function which satisfies for every $x\in\mathbb{R}$ that
\begin{equation} (x)_+=\max(x,0),\end{equation}
assume for every $x\in S$ that $\mathbb{R}^d\ni\theta\mapsto F(\theta,x)\in\mathbb{R}$ is a locally Lipschitz continuous function, assume that $f|_U\colon U\rightarrow\mathbb{R}$ is a three times continuously differentiable function, assume for every non-empty compact set $\mathfrak{C}\subseteq U$ that $\sup\nolimits_{\theta\in \mathfrak{C}}\mathbb{E}\big[|F(\theta,X_{1,1})|^2+|(\nabla_\theta F)(\theta,X_{1,1})|^2\big]<\infty$, assume that $\mathcal{M}\cap U$ is a compact $\mathfrak{d}$-dimensional $\C^1$-submanifold of $\mathbb{R}^d$, assume that $\mathcal{M}\cap U\cap A\neq\emptyset$, assume for every $\theta\in(\mathcal{M}\cap U)$ that $\rank((\Hess f)(\theta))=d-\mathfrak{d}$, for every $M\in\mathbb{N}$, $r\in(0,\infty)$ let $\Theta^{M,r}_0\colon\Omega\rightarrow\mathbb{R}^d$ be continuous uniformly distributed on $A$, assume for every $M\in\mathbb{N}$, $r\in(0,\infty)$ that $\Theta^{M,r}_0$ and $\big(X_{n,m}\big)_{n,m\in\mathbb{N}}$ are independent, and for every $M\in\mathbb{N}$, $r\in(0,\infty)$ let $\Theta^{M,r}_{n,\theta}\colon\Omega\rightarrow\mathbb{R}^d$, $n\in\mathbb{N}$, be random variables which satisfy that
\begin{equation}\Theta^{M,r}_n=\Theta^{M,r}_{n-1}-\frac{r}{n^\rho M}\!\left[\sum_{m=1}^M(\nabla_\theta F)(\Theta^{M,r}_{n-1},X_{n,m})\right].\end{equation}
Then for every $x_0\in(\mathcal{M}\cap U\cap A)$  there exist $\delta_0,\mathfrak{r},c\in(0,\infty)$ such that for every $R\in(0,\infty)$, $\delta\in(0,\delta_0]$, $r\in(0,\mathfrak{r}]$, $n,M\in\mathbb{N}$, $\varepsilon\in(0,1]$ it holds that
\begin{equation}\begin{aligned} & \mathbb{P}\Big(  \mathbf{d}\big(\Theta^{M,r}_n,\mathcal{M}\cap U\big)\geq\varepsilon\Big)  \\ & \leq \frac{\lambda\big(A\backslash V_{\nicefrac{R}{2},\delta}(x_0)\big)}{\lambda(A)} +c\varepsilon^{-2}n^{-\rho}+1- \prod_{k=1}^n\left(1-\frac{c}{Mk^{2\rho}}\right)_++cM^{-1}n^{1-\rho}.\end{aligned}\end{equation}
\end{thm}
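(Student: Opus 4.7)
The plan is to mirror the proof of Theorem~\ref{ng_one_path} essentially line for line, substituting the compact-case refinements provided by Proposition~\ref{ng_converge_comp} and Proposition~\ref{ng_good_set_comp} wherever the non-compact versions were invoked. Fix $x_0\in\mathcal{M}\cap U\cap A$, use the openness of $U$ to fix a neighbourhood $V_*(x_0)\subseteq U$ satisfying Proposition~\ref{def_projection}, and choose $\delta_0,\mathfrak{r}\in(0,\infty)$ so that the conclusions of Proposition~\ref{cts_prop_tub}, Lemma~\ref{lem_projection}, Proposition~\ref{ng_converge_comp} and Proposition~\ref{ng_good_set_comp} hold simultaneously. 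The key structural point is that, by compactness of $\mathcal{M}\cap U$, there exists $R_*\in(0,\infty)$ such that $V_{R,\delta}(x_0)=V_{R_*,\delta}(x_0)$ for every $R\in[R_*,\infty)$; consequently, the constants in Proposition~\ref{ng_converge_comp} and Proposition~\ref{ng_good_set_comp} may be chosen independently of $R\in(0,\infty)$, and every $R\in(0,\infty)$ is admissible in the statement.

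First I would decompose, as in \eqref{gss_1},
\begin{equation}
\mathbb{P}\bigl(\dd(\Theta^{M,r}_n,\mathcal{M}\cap U)\geq\varepsilon\bigr)
\leq \mathbb{P}\bigl(\Theta^{M,r}_0\notin V_{\nicefrac{R}{2},\delta}(x_0)\bigr)
+ \mathbb{P}\bigl(\dd(\Theta^{M,r}_n,\mathcal{M}\cap U)\geq\varepsilon,\ \Theta^{M,r}_0\in V_{\nicefrac{R}{2},\delta}(x_0)\bigr).
\end{equation}
The first term is immediately bounded by $\lambda(A\setminus V_{\nicefrac{R}{2},\delta}(x_0))/\lambda(A)$ using that $\Theta^{M,r}_0$ is uniform on $A$. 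For the second term, I further split according to whether SGD stays inside the basin of attraction, i.e.\ using the event $A_{n-1}\subseteq\Omega$ on which $\Theta^{M,r}_{k,\theta}\in V_{R,\delta}(x_0)$ for every $k\in\{0,\ldots,n-1\}$.

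On the complement of $A_{n-1}$, by independence of $\Theta^{M,r}_0$ from $(X_{n,m})_{n,m\in\mathbb{N}}$ followed by conditioning on the initial condition, Proposition~\ref{ng_good_set_comp} gives the bound
\begin{equation}
\mathbb{P}\bigl(\Omega\setminus A_{n-1},\ \Theta^{M,r}_0\in V_{\nicefrac{R}{2},\delta}(x_0)\bigr)
\leq 1-\prod_{k=1}^n\Bigl(1-\frac{c}{Mk^{2\rho}}\Bigr)_+ + cM^{-1}n^{1-\rho},
\end{equation}
which is precisely the improvement over the non-compact case: the tangential escape term $cr(1+M^{-1/2}n^{1-\rho})/(R/2-2\delta)_+$ is no longer present. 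On the event $A_{n-1}$, conditioning on $\Theta^{M,r}_0$ and using Chebyshev's inequality together with Proposition~\ref{ng_converge_comp} yields
\begin{equation}
\mathbb{P}\bigl(\dd(\Theta^{M,r}_n,\mathcal{M}\cap U)\geq\varepsilon,\ A_{n-1},\ \Theta^{M,r}_0\in V_{\nicefrac{R}{2},\delta}(x_0)\bigr)
\leq c\varepsilon^{-2}n^{-\rho},
\end{equation}
exactly as in \eqref{gss_5}--\eqref{gss_7}. Combining the three contributions proves the claim.

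The only genuinely new ingredient in this argument, as opposed to a rote transcription of the proof of Theorem~\ref{ng_one_path}, is the observation that compactness of $\mathcal{M}\cap U$ renders the $R$-dependence in Proposition~\ref{ng_good_set} inert, so that one can take $R$ to be arbitrary (effectively $R\to\infty$) while keeping the constants uniform; this is what suppresses the tangential error term. I do not anticipate a substantive obstacle beyond verifying that the hypotheses of Proposition~\ref{ng_converge_comp} and Proposition~\ref{ng_good_set_comp} propagate correctly through the initial-condition averaging, which is handled by the independence of $\Theta^{M,r}_0$ and $(X_{n,m})_{n,m\in\mathbb{N}}$ as in the non-compact case.
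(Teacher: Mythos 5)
Your proposal is correct and follows exactly the route the paper intends: the paper's own proof is a one-line citation of Theorem~\ref{ng_one_path}, Proposition~\ref{ng_converge_comp}, and Proposition~\ref{ng_good_set_comp}, and your write-up simply makes explicit the substitution of the compact-case propositions into the decomposition \eqref{gss_1}--\eqref{gss_9}, including the key observation (also stated in the paper's discussion preceding Proposition~\ref{ng_good_set_comp}) that compactness of $\mathcal{M}\cap U$ makes the constants uniform in $R$ and removes the tangential escape term.
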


\begin{proof}[Proof of Theorem~\ref{ng_one_path_comp}] The proof is an immediate consequence of Theorem~\ref{ng_one_path}, Proposition~\ref{ng_converge_comp}, and Proposition~\ref{ng_good_set_comp}.\end{proof}

The following theorem estimates probability that $K\in\mathbb{N}$ independent solutions of SGD with initial data sampled from a uniform distribution on a compact set $A\subseteq\mathbb{R}^d$ which satisfies that $\mathcal{M}\cap U\cap A$ is non-empty fail to converge to within distance $\varepsilon\in(0,1]$ to the local manifold of minima at time $n\in\mathbb{N}$.  The convergence is measured by minimizing a mini-batch average of the objective function.  The proof is a consequence of Theorem~\ref{ng_one_path_comp} and the arguments leading from Theorem~\ref{ng_one_path} to Theorem~\ref{thm_intro_ng_one_path}.

 \begin{thm}\label{thm_intro_ng_one_path_comp}  Let $d\in\mathbb{N}$, $\mathfrak{d}\in\{ 0, 1, \ldots, d - 1 \}$, $\rho\in(0,1)$, let $\abs{\cdot}\colon\mathbb{R}^d\rightarrow\mathbb{R}$ be the standard norm on $\mathbb{R}^d$, let $U\subseteq\mathbb{R}^d$ be an open set, let $A\subseteq\mathbb{R}^d$ be a bounded open set, let $(\Omega,\mathcal{F},\mathbb{P})$ be a probability space, let $(S,\mathcal{S})$ be a measurable space, let $F=(F(\theta,x))_{(\theta,x)\in\mathbb{R}^d\times S}\colon  \mathbb{R}^d\times S\rightarrow\mathbb{R}$ be a measurable function, let $X_{n,m}\colon  \Omega\rightarrow S$, $n,m\in\mathbb{N}$, be i.i.d.\ random variables which satisfy for every $\theta\in\mathbb{R}^d$ that $\mathbb{E}\big[ |F(\theta,X_{1,1})|^2\big]<\infty$, let $f\colon\mathbb{R}^d\rightarrow\mathbb{R}$ be the function which satisfies for every $\theta\in\mathbb{R}^d$ that $f(\theta)=\mathbb{E}\big[F(\theta,X_{1,1})\big]$, let $\mathcal{M}\subseteq\mathbb{R}^d$ satisfy that
\begin{equation}\mathcal{M}=\big\{\theta\in\mathbb{R}^d\colon [f(\theta)=\inf\nolimits_{\vartheta\in\mathbb{R}^d} f(\vartheta)]\big\},\end{equation}
let $(\cdot)_+\colon\mathbb{R}\rightarrow\mathbb{R}$ be the function which satisfies for every $x\in\mathbb{R}^d$ that
\begin{equation}(x)_+=\max(0,x),\end{equation}
assume for every $x\in S$ that $\mathbb{R}^d\ni\theta\mapsto F(\theta,x)\in\mathbb{R}$ is a locally Lipschitz continuous function, assume that $f|_U\colon U\rightarrow\mathbb{R}$ is a three times continuously differentiable function, assume for every non-empty compact set $\mathfrak{C}\subseteq U$ that $\sup\nolimits_{\theta\in \mathfrak{C}}\mathbb{E}\big[|F(\theta,X_{1,1})|^2+|(\nabla_\theta F)(\theta,X_{1,1})|^2\big]<\infty$, assume that $\mathcal{M}\cap U$ is a compact $\mathfrak{d}$-dimensional $\C^1$-submanifold of $\mathbb{R}^d$, assume that $\mathcal{M}\cap U\cap A\neq\emptyset$, assume for every $\theta\in(\mathcal{M}\cap U)$ that $\rank((\Hess f)(\theta))=d-\mathfrak{d}$, for every $n\in\mathbb{N}_0$, $M\in\mathbb{N}$, $r\in(0,\infty)$ let $\Theta^{k,M,r}_{n}\colon\Omega\rightarrow\mathbb{R}^d$, $k\in\mathbb{N}$, be i.i.d.\ random variables, assume for every $n,M\in\mathbb{N}$, $r\in(0,\infty)$ that $(\Theta^{k,M,r}_{n-1})_{k\in\{2,3,\ldots\}}$ and $(X_{n,k})_{k\in\mathbb{N}}$ are independent, assume for every $M\in\mathbb{N}$, $r\in(0,\infty)$ that $\Theta^{1,M,r}_{0}$ is continuous uniformly distributed on $A$, assume for every $M\in\mathbb{N}$, $r\in(0,\infty)$ that $\Theta^{M,r}_0$ and $\big(X_{n,m}\big)_{n,m\in\mathbb{N}}$ are independent, assume for every $n,M\in\mathbb{N}$, $r\in(0,\infty)$ that
\begin{equation}\Theta^{1,M,r}_{n}=\Theta^{1,M,r}_{n-1}-\frac{r}{n^\rho M}\!\left[\sum_{m=1}^M(\nabla_\theta F)(\Theta^{1,M,r}_{n-1},X_{n,m})\right],\end{equation}
and for every $n,M,\mathfrak{M},K\in\mathbb{N}$, $r\in(0,\infty)$ let $\varTheta^{K,M,\mathfrak{M},r}_n\colon\Omega\rightarrow\mathbb{R}^d$ be a random variable which satisfies that
\begin{equation}\sum_{m=1}^\mathfrak{M}F(\varTheta^{K,M,\mathfrak{M},r}_n, X_{n+1,m})=\Big[\min_{k\in\{1,2,\ldots,K\}}\Big(\sum_{m=1}^\mathfrak{M}F(\Theta^{k,M,r}_n, X_{n+1,m})\Big)\Big].\end{equation}
Then for every $x_0\in(\mathcal{M}\cap U\cap A)$  there exist $\delta_0,\mathfrak{r},c\in(0,\infty)$ such that for every $R\in(0,\infty)$, $\delta\in(0,\delta_0]$, $r\in(0,\mathfrak{r}]$, $n,M,K\in\mathbb{N}$, $\varepsilon\in(0,1]$ it holds that
\begin{equation}\begin{aligned} & \mathbb{P}\Big( \Big[f(\varTheta^{K,M,\mathfrak{M},r}_n)-\inf\nolimits_{\theta\in\mathbb{R}^d}f(\theta)\Big]\geq\varepsilon\Big) \\ &  \leq \frac{cK}{\varepsilon^2\mathfrak{M}} + \left(\frac{\lambda\big(A\backslash V_{\nicefrac{R}{2},\delta}(x_0)\big)}{\lambda(A)} +c\varepsilon^{-2}n^{-\rho}+1- \prod_{k=1}^n\left(1-\frac{c}{Mk^{2\rho}}\right)_++cM^{-1}n^{1-\rho}\right)^K.\end{aligned}\end{equation}
\end{thm}

\begin{proof}[Proof of Theorem~\ref{thm_intro_ng_one_path_comp}] The proof is an immediate consequence of Theorem~\ref{ng_one_path_comp}, Theorem~\ref{ng_one_path}, and Theorem~\ref{thm_intro_ng_one_path}.\end{proof}

In the final proposition of this section, we prove that the computation efficiency of the SGD algorithm proposed in Theorem~\ref{thm_intro_ng_one_path_comp} is improved by the compactness of $\mathcal{M}\cap U$.  The improvement is due to the fact that the mini-batch size $M\in\mathbb{N}$ can be chosen smaller in the compact case, since the mini-batch size no longer needs to account for the possibility that SGD leaves a basin of attraction along directions tangential to the local manifold of minima.

 \begin{cor}\label{cor_computation_comp}  Let $d\in\mathbb{N}$, $\mathfrak{d}\in\{ 0, 1, \ldots, d - 1 \}$, $\rho\in(0,1)$, let $U\subseteq\mathbb{R}^d$ be an open set, let $A\subseteq\mathbb{R}^d$ be a bounded open set, let $(\Omega,\mathcal{F},\mathbb{P})$ be a probability space, let $(S,\mathcal{S})$ be a measurable space, let $F=(F(\theta,x))_{(\theta,x)\in\mathbb{R}^d\times S}\colon  \mathbb{R}^d\times S\rightarrow\mathbb{R}$ be a measurable function, let $X_{n,m}\colon  \Omega\rightarrow S$, $n,m\in\mathbb{N}$, be i.i.d.\ random variables which satisfy for every $\theta\in\mathbb{R}^d$ that $\mathbb{E}\big[ |F(\theta,X_{1,1})|^2\big]<\infty$, let $f\colon\mathbb{R}^d\rightarrow\mathbb{R}$ be the function which satisfies for every $\theta\in\mathbb{R}^d$ that $f(\theta)=\mathbb{E}\big[F(\theta,X_{1,1})\big]$, let $\mathcal{M}\subseteq\mathbb{R}^d$ satisfy that
\begin{equation}\mathcal{M}=\big\{\theta\in\mathbb{R}^d\colon [f(\theta)=\inf\nolimits_{\vartheta\in\mathbb{R}^d} f(\vartheta)]\big\},\end{equation}
assume for every $x\in S$ that $\mathbb{R}^d\ni\theta\mapsto F(\theta,x)\in\mathbb{R}$ is a locally Lipschitz continuous function, assume that $f|_U\colon U\rightarrow\mathbb{R}$ is a three times continuously differentiable function, assume for every non-empty compact set $\mathfrak{C}\subseteq U$ that $\sup\nolimits_{\theta\in \mathfrak{C}}\mathbb{E}\big[|F(\theta,X_{1,1})|^2+|(\nabla_\theta F)(\theta,X_{1,1})|^2\big]<\infty$, assume that $\mathcal{M}\cap U$ is a compact $\mathfrak{d}$-dimensional $\C^1$-submanifold of $\mathbb{R}^d$, assume that $\mathcal{M}\cap U\cap A\neq\emptyset$, assume for every $\theta\in(\mathcal{M}\cap U)$ that $\rank((\Hess f)(\theta))=d-\mathfrak{d}$, for every $n\in\mathbb{N}_0$, $M\in\mathbb{N}$, $r\in(0,\infty)$ let $\Theta^{k,M,r}_{n}\colon\Omega\rightarrow\mathbb{R}^d$, $k\in\mathbb{N}$, be i.i.d.\ random variables, assume for every $n,M\in\mathbb{N}$, $r\in(0,\infty)$ that $(\Theta^{k,M,r}_{n-1})_{k\in\{2,3,\ldots\}}$ and $(X_{n,k})_{k\in\mathbb{N}}$ are independent, assume for every $M\in\mathbb{N}$, $r\in(0,\infty)$ that $\Theta^{1,M,r}_{0}$ is continuous uniformly distributed on $A$, assume for every $M\in\mathbb{N}$, $r\in(0,\infty)$ that $\Theta^{M,r}_0$ and $\big(X_{n,m}\big)_{n,m\in\mathbb{N}}$ are independent, assume for every $n,M\in\mathbb{N}$, $r\in(0,\infty)$ that
\begin{equation}\Theta^{1,M,r}_{n}=\Theta^{1,M,r}_{n-1}-\frac{r}{n^\rho M}\!\left[\sum_{m=1}^M(\nabla_\theta F)(\Theta^{1,M,r}_{n-1},X_{n,m})\right],\end{equation}
and for every $n,M,\mathfrak{M},K\in\mathbb{N}$, $r\in(0,\infty)$ let $\varTheta^{K,M,\mathfrak{M},r}_n\colon\Omega\rightarrow\mathbb{R}^d$ be a random variable which satisfies that
\begin{equation}\sum_{m=1}^\mathfrak{M}F(\varTheta^{K,M,\mathfrak{M},r}_n, X_{n+1,m})=\Big[\min_{k\in\{1,2,\ldots,K\}}\Big(\sum_{m=1}^\mathfrak{M}F(\Theta^{k,M,r}_n, X_{n+1,m})\Big)\Big].\end{equation}
Then for every $x_0\in(\mathcal{M}\cap U\cap A)$ there exist $R_0,\delta_0,\mathfrak{r}\in(0,\infty)$ such that for every $R\in(0,R_0]$, $\delta\in(0,\delta_0]$, $r\in(0,\mathfrak{r}]$ there exist $c_i\in(0,\infty)$, $i\in\{1,2,3,4\}$, such that for every $\varepsilon,\eta\in(0,1]$, for $n(\varepsilon), M(\varepsilon), K(\eta), \mathfrak{M}(\varepsilon,\eta)\in\mathbb{N}$ which satisfy that
\begin{equation} n(\varepsilon)=c_1\varepsilon^{-\nicefrac{2}{\rho}},\;\; M(\varepsilon)=c_2\varepsilon^{-\nicefrac{2}{\rho}+2},\;\; \mathfrak{M}(\varepsilon,\eta)=c_3\varepsilon^{-2}\eta^{-1}\abs{\log(\eta)},\;\;\textrm{and}\;\;K=c_4\abs{\log(\eta)},\end{equation}
it holds that
\begin{equation}\mathbb{P}\Big( \Big[f(\varTheta^{K(\eta),M(\varepsilon),\mathfrak{M}(\varepsilon,\eta),r}_{n(\varepsilon)})-\inf\nolimits_{\theta\in\mathbb{R}^d}f(\theta)\Big]\geq\varepsilon\Big) \leq \eta.\end{equation}
\end{cor}

\begin{proof}[Proof of Corollary~\ref{cor_computation_comp}]  The proof is an immediate consequence of Theorem~\ref{thm_intro_ng_one_path_comp} and the proof of Corollary~\ref{cor_computation}.\end{proof}

\section{Applications}\label{sec_applications}

In this section, we prove that the conditions of Theorem~\ref{intro_ng_one_path} are satisfied for some (simple) objective functions $f\colon  \mathbb{R}^d\rightarrow\mathbb{R}$ of the type \eqref{intro_aan} that arise in the training of neural networks.  We will consider the case of a four-parameter affine-linear network with a linear activation function and the case of a two-parameter network with the ReLU activation function.  We will prove that the set of global minima are respectively a codimension $2$ submanifold of the parameter space, and a codimension $1$ submanifold.  This implies, in particular, that the global minima are not locally unique, and that the established convergence results, such as those proven in \cite{DMG15,JKNW18}, do not apply.

\subsection{A four-parameter network with a linear activation function}\label{examples}

In this section, we show that the conditions of Theorem~\ref{intro_ng_one_path} are satisfied by a four-parameter affine-linear network with a linear activation function.

\begin{prop}\label{pair_assumption}  Let $\varphi\in L^2([0,1])$ be finite, let $(\Omega,\mathcal{F},\mathbb{P})$ be a probability space, let $X_{n,m}\colon\Omega\rightarrow[0,1]$, $n,m\in\mathbb{N}$, be i.i.d. random variables that are continuous uniformly distributed on $[0,1]$, let $f\colon\mathbb{R}^4\rightarrow\mathbb{R}$ be the function which satisfies for every $\theta=(\theta_1,\theta_2,\theta_3,\theta_4)\in\mathbb{R}^4$ that 
\begin{equation}f(\theta)=\int_0^1\abs{\theta_3\theta_1x+\theta_3\theta_2+\theta_4-\varphi(x)}^2\dx,\end{equation}
and let $F\colon\mathbb{R}^4\times[0,1]\rightarrow\mathbb{R}$ be the function that satisfies for every $\theta\in\mathbb{R}^4$, $x\in[0,1]$ that
\begin{equation} F(\theta,x)=\abs{\theta_3\theta_1x+\theta_3\theta_2+\theta_4-\varphi(x)}^2.\end{equation}
Then the functions $f$, $F$ and the random variables $X_{n,m}$, $n,m\in\mathbb{N}$, satisfy the conditions of Theorem~\ref{intro_ng_one_path}. \end{prop}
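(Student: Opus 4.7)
The plan is to identify $\mathcal{M}$ explicitly, choose an appropriate open set $U\subseteq\mathbb{R}^4$, and verify each hypothesis of Theorem~\ref{intro_ng_one_path} directly.  Set $g(\theta,x):=\theta_3\theta_1 x+\theta_3\theta_2+\theta_4-\varphi(x)$, so that $F(\theta,x)=g(\theta,x)^2$ and $f(\theta)$ equals the squared $L^2([0,1])$-distance from $\varphi$ to the affine polynomial with slope $\theta_3\theta_1$ and intercept $\theta_3\theta_2+\theta_4$.  Denote by $\varphi^\star(x)=a^\star x+b^\star$ the $L^2$-orthogonal projection of $\varphi$ onto $\operatorname{span}_{L^2}\{1,x\}$ and let $e:=\varphi-\varphi^\star$, which satisfies $e\perp\operatorname{span}\{1,x\}$.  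Then
\begin{equation*}
\mathcal{M}=\big\{\theta\in\mathbb{R}^4\colon \theta_3\theta_1=a^\star\;\textrm{and}\;\theta_3\theta_2+\theta_4=b^\star\big\}.
\end{equation*}

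Assume $a^\star\neq 0$ (the case $a^\star=0$ is handled analogously by taking $U$ near a point with $\theta_3=0$, $\theta_1\neq 0$, $\theta_4=b^\star$, where $\mathcal{M}$ agrees locally with the hyperplane-slice $\{\theta_3=0,\theta_4=b^\star\}$).  Set $U:=\{\theta\in\mathbb{R}^4\colon\theta_3\neq 0\}$, $\mathfrak{d}:=2$, and let $A$ be any bounded open ball around a point of $\mathcal{M}\cap U$.  On $U$ the gradients of the two defining equations of $\mathcal{M}$ are $(\theta_3,0,\theta_1,0)$ and $(0,\theta_3,\theta_2,1)$; in any linear dependence the fourth coordinate forces the coefficient on the second to vanish, after which $\theta_3\neq 0$ forces the coefficient on the first to vanish.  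Hence $\mathcal{M}\cap U$ is a two-dimensional $\C^\infty$-submanifold of $\mathbb{R}^4$ meeting $A$.

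To verify the Hessian rank condition, the usual second-derivative formula gives
\begin{equation*}
(\Hess f)(\theta)=2\int_0^1 \nabla_\theta g(\theta,x)\,\nabla_\theta g(\theta,x)^T\,dx+2\int_0^1 g(\theta,x)\,\nabla_\theta^2 g(\theta,x)\,dx.
\end{equation*}
The only nonzero entries of $\nabla_\theta^2 g$ are $\partial_{\theta_1}\partial_{\theta_3}g=x$ and $\partial_{\theta_2}\partial_{\theta_3}g=1$, both lying in $\operatorname{span}\{1,x\}$ as functions of $x$.  On $\mathcal{M}$ one has $g(\theta,\cdot)=-e$, so the orthogonality $e\perp\operatorname{span}\{1,x\}$ makes the second integral vanish entry by entry.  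Thus $(\Hess f)(\theta)=2\int_0^1 v(x)v(x)^T\,dx$ with $v(x):=(\theta_3 x,\,\theta_3,\,\theta_1 x+\theta_2,\,1)^T$.  A vector $w\in\mathbb{R}^4$ lies in the null space iff $(w_1\theta_3+w_3\theta_1)x+(w_2\theta_3+w_3\theta_2+w_4)\equiv 0$ in $x$, which (using $\theta_3\neq 0$) is exactly two independent linear conditions on four unknowns.  Hence $\rank((\Hess f)(\theta))=4-2=d-\mathfrak{d}$, as required.

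Finally, $F$ is measurable and polynomial in $\theta$ of degree at most four, so each map $\theta\mapsto F(\theta,x)$ is $\C^\infty$, and differentiation under the integral sign yields $f\in\C^\infty(\mathbb{R}^4)$, and in particular $f|_U\in\C^3(U)$.  Interpreting the hypothesis ``$\varphi\in L^2([0,1])$ finite'' as $\varphi\in L^\infty([0,1])$, a direct expansion gives $|F(\theta,x)|^2\leq C_{\mathfrak{C}}(1+\varphi(x)^4)$ and $|\nabla_\theta F(\theta,x)|^2\leq C_{\mathfrak{C}}(1+\varphi(x)^2)$ uniformly for $\theta$ in compact $\mathfrak{C}\subseteq\mathbb{R}^4$, which together with $\varphi\in L^\infty\subseteq L^p$ for every $p$ yields both the pointwise condition $\mathbb{E}[|F(\theta,X_{1,1})|^2]<\infty$ and the uniform moment bound on $\mathfrak{C}$.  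The main obstacle in this plan is the Hessian rank computation, which succeeds precisely because the residual $e$ is $L^2$-orthogonal to the two-dimensional space in which the nonzero entries of $\nabla_\theta^2 g$ live, killing the Hessian's ``second-derivative'' contribution on $\mathcal{M}\cap U$, so that only the Gram matrix $\int vv^T\,dx$ remains and its corank matches $\dim(\mathcal{M}\cap U)$.
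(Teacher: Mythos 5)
Your proposal is correct, and it takes a genuinely different route from the paper's proof. The paper works entirely by hand in the original coordinates: it writes out $\nabla f(\theta)=0$ as an explicit $4\times 4$ polynomial system in terms of the moments $r_k=\int_0^1 x^k\varphi(x)\,dx$, solves it on $\{\theta_3\neq 0\}$, then runs a contradiction argument to show the resulting critical set is the set of global minima, and finally writes out the full Hessian matrix and column-reduces it. You instead observe that $f$ factors through the surjective reparametrization $\theta\mapsto(\theta_3\theta_1,\,\theta_3\theta_2+\theta_4)$ composed with the strictly convex least-squares problem $\inf_{a,b}\|ax+b-\varphi\|_{L^2}^2$, so that $\mathcal{M}$ is exactly the preimage of the unique minimizer $(a^\star,b^\star)$ (the $L^2$-projection of $\varphi$ onto $\mathrm{span}\{1,x\}$); and for the nondegeneracy you use the decomposition $(\Hess f)(\theta)=2\int\nabla_\theta g\,\nabla_\theta g^T\,dx+2\int g\,\nabla_\theta^2 g\,dx$ together with the orthogonality of the residual $e=\varphi-\varphi^\star$ to $\mathrm{span}\{1,x\}$ to kill the second term, reducing the rank computation to the kernel of a Gram matrix. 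This buys both brevity and a conceptual payoff the paper's column reduction obscures: the kernel conditions you derive, $w_1\theta_3+w_3\theta_1=0$ and $w_2\theta_3+w_3\theta_2+w_4=0$, are precisely the equations cutting out $T_\theta(\mathcal{M}\cap U)$, so the Hessian's null space is visibly the tangent space, which is exactly what Proposition~\ref{nondegenerate} demands. Your check that $(\theta_3,0,\theta_1,0)$ and $(0,\theta_3,\theta_2,1)$ are independent on $\{\theta_3\neq 0\}$ correctly establishes the submanifold structure, and your treatment of the degenerate case $a^\star=0$ is a reasonable sketch (note the paper simply keeps $U=\{\theta_3\neq 0\}$ in that case as well, which also works). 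One further remark in your favor: the uniform second-moment bound requires $\int_0^1\varphi(x)^4\,dx<\infty$ since $|F(\theta,x)|^2$ contains $\varphi(x)^4$; the paper attributes this to ``$L^2$-integrability,'' which is not literally sufficient, whereas your reading of ``finite'' as boundedness (or anything giving $\varphi\in L^4$) closes that gap.
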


\begin{proof}[Proof of Proposition~\ref{pair_assumption}]  Let $\varphi\in L^2([0,1])$ be finite.  The finiteness of $\varphi$ proves that, for every $x\in[0,1]$, we have $F(\cdot,x)\in \C^{0,1}_\textrm{loc}(\mathbb{R}^4)$.  It follows by the uniform distribution of the $X_{n,m}$, $n,m\in\mathbb{N}$, on $[0,1]$ that $f(\cdot)=\mathbb{E}[F(\cdot,X_{1,1})]$, and it follows from the $L^2$-integrability of $\varphi$ that for every compact subset $\mathfrak{C}\subseteq\mathbb{R}^4$ it holds that
\begin{equation}\sup_{\theta\in \mathfrak{C}}\mathbb{E}\left[\abs{F(\theta,X_{1,1})}^2+\abs{\nabla_\theta F(\theta,X_{1,1})}^2\right]< \infty.\end{equation}
It follows by the definition of $f$ and $\varphi\in L^2([0,1])$ that $f\in \C^3_{\textrm{loc}}(\mathbb{R}^4)$.  It remains to characterize the set of minima of $f$.  We first observe that when minimizing $f$, it is sufficient to minimize the potential over the set $\{\theta_3\neq 0\}$.  To see this, suppose that $\theta=(\theta_1,\theta_2,0,\theta_4)$.  Then for $\tilde{\theta}=(0,0,1,\theta_4)$ it holds that
\begin{equation}f(\theta)=\int_0^1\abs{\theta_4-\varphi(x)}^2\dx=f(\tilde{\theta}).\end{equation}
Therefore, it holds that
\begin{equation}\label{lin_constant_min}\inf_{\theta\in\mathbb{R}^4}f(\theta)=\inf_{\theta\in\{\theta_3\neq 0\}}f(\theta).\end{equation}
Let $\theta\in\mathbb{R}^4\cap\{\theta_3\neq 0\}$ be fixed but arbitrary.  An explicit computation proves the critical points of $f$ satisfy that
\begin{equation}\label{lin_grad}\nabla f(\theta)=2\int_0^1\left(\theta_3\theta_1x+\theta_3\theta_2+\theta_4-\varphi(x)\right)\left(\begin{aligned} & \theta_3x \\ & \theta_3 \\ & \theta_1x+\theta_2 \\ & 1 \end{aligned}\right)\dx=0.\end{equation}
For $r_k\in\mathbb{R}$, $k\in\{0,1\}$, which satisfy that
\begin{equation} r_k =\int_0^1x^k\varphi(x)\dx,\end{equation}
it follows that $\theta\in\mathbb{R}^4$ satisfies equation \eqref{lin_grad} if and only if it holds that
\begin{equation}\label{lin_system}\left\{\begin{aligned} & \frac{1}{3}\theta_1\theta_3^2+\frac{1}{2}\theta_2\theta_3^2+\frac{1}{2}\theta_3\theta_4- r_1\theta_3=0, \\ & \frac{1}{2}\theta_1\theta_3^2+\theta_2\theta_3^2+\theta_3\theta_4- r_0\theta_3=0, \\ & \frac{1}{3}\theta^2_1\theta_3+\frac{1}{2}\theta_1\theta_2\theta_3+\frac{1}{2}\theta_1\theta_4- r_1\theta_1+\frac{1}{2}\theta_1\theta_2\theta_3+\theta^2_2\theta_3+\theta_2\theta_4- r_0\theta_2=0, \\ & \frac{1}{2}\theta_1\theta_3+\theta_2\theta_3+\theta_4- r_0=0.\end{aligned}\right.\end{equation}
For $\theta\in\mathbb{R}^4$ which satisfies that $\theta_3\neq 0$, an explicit computation proves that $\theta$ satisfies system \eqref{lin_system} if and only if it holds that
\begin{equation}\label{lin_nonzero}\theta_1\theta_3=-6( r_0-2 r_1)\;\;\textrm{and}\;\;\theta_4=-\theta_2\theta_3+4 r_0-6 r_1.\end{equation}
For $U\subseteq\mathbb{R}^4$ which satisfies that
\begin{equation} U=\{\theta\in\mathbb{R}^4\colon\theta_3\neq 0\},\end{equation}
for $\mathcal{M}\subseteq\mathbb{R}^4$ which satisfies that
\begin{equation}\mathcal{M}=\{\theta\in\mathbb{R}^4\colon f(\theta)=\inf\nolimits_{\vartheta\in\mathbb{R}^4}f(\vartheta)\},\end{equation}
we claim that
\begin{equation}\label{lin_nonzero_2}\mathcal{M}\cap U=\{\;\theta\in\mathbb{R}^4\colon \theta\;\textrm{satisfies \eqref{lin_nonzero} and}\;\theta_3\neq 0\;\}.\end{equation}
Let $\theta\in\mathbb{R}^4$ satisfy \eqref{lin_nonzero} and $\theta_3\neq 0$.  Proceeding by contradiction, suppose that there exists $\theta_0=(\theta_{1,0}, \theta_{2,0}, \theta_{3,0}, \theta_{4,0})$ which satisfies $\theta_{3,0}\neq 0$ such that
\begin{equation}f(\theta_0)<f(\theta).\end{equation}
Since an explicit computation proves for every $(\theta_1,\theta_4)\in\mathbb{R}^2$ that
\begin{equation}\lim_{\abs{(\theta_1,\theta_4)}\rightarrow\infty}f(\theta_1,\theta_{2,0},\theta_{3,0},\theta_4)=\infty,\end{equation}
the identical considerations leading to \eqref{lin_nonzero} prove that
\begin{equation}(\theta_1,\theta_4)\in\mathbb{R}^2\mapsto f(\theta_1,\theta_{2,0},\theta_{3,0},\theta_4),\end{equation}
is uniquely minimized, owing to $\theta_{3,0}\neq 0$, by $(\theta_1,\theta_4)\in\mathbb{R}^2$ which satisfies that
\begin{equation}\label{lin_nonzero_1}\theta_1=-\frac{6( r_0-2 r_1)}{\theta_{3,0}}\;\;\textrm{and}\;\;\theta_4=-\theta_{2,0}\theta_{3,0}+4 r_0+6 r_1.\end{equation}
We conclude that $\tilde{\theta}_0\in\mathbb{R}^4$ satisfies that
\begin{equation}\tilde{\theta}_0=(-\frac{6( r_0-2 r_1)}{\theta_{3,0}},\theta_{2,0}, \theta_{3,0}, -\theta_{2,0}\theta_{3,0}+4 r_0+6 r_1),\end{equation}
satisfies \eqref{lin_nonzero} and $\tilde{\theta}_{3,0}\neq 0$.  Therefore, it holds that
\begin{equation}f(\tilde{\theta}_0)< f(\theta_0),\end{equation}
which contradicts the fact that $\nabla f=0$ on the connected set of $\theta\in\mathbb{R}^4$ which satisfies \eqref{lin_nonzero} and $\theta_3\neq 0$.  This proves \eqref{lin_nonzero_2}.  It is immediate from \eqref{lin_nonzero} that $\mathcal{M}\cap U$ is a non-empty, $2$-dimensional, $\C^1$-submanifold of $\mathbb{R}^4$.  It remains only to prove the nondegeneracy assumption.  for every $\theta\in(\mathcal{M}\cap U)$, after computing the Hessian\footnote{Due to the symmetry of the Hessian, we only write the upper diagonal.}, it holds that
\begin{equation}\begin{aligned} \big(\Hess f\big)(\theta) & =  2\int_0^1\left(\begin{aligned} \theta_3^2x^2 && \theta_3^2x && \theta_1\theta_3x^2+\theta_2\theta_3x && \theta_3x \\ && \theta_3^2 && \theta_1\theta_3x+\theta_2\theta_3 && \theta_3 \\ && && (\theta_1x+\theta_2)^2 && \theta_1x+\theta_2 \\ && && && 1\end{aligned}\right)\dx \\ & =   \left(\begin{aligned} \frac{2}{3}\theta_3^2 && \theta_3^2 && \frac{2}{3}\theta_1\theta_3+\theta_2\theta_3 && \theta_3 \\ && 2\theta_3^2 && \theta_1\theta_3+2\theta_2\theta_3 && 2\theta_3 \\ && && \frac{2}{3}\theta_1^2+2\theta_1\theta_2+2\theta_2^2 && \theta_1+2\theta_2 \\ && && && 2 \end{aligned} \right),\end{aligned}\end{equation}
where this equality relies upon the fact that, due to \eqref{lin_grad} and $\theta_3\neq 0$ on $\mathcal{M}\cap U$, we have that
\begin{equation}\int_0^1(\theta_3\theta_1x+\theta_3\theta_2+\theta_4-\varphi(x))\dx=\int_0^1(\theta_3\theta_1x+\theta_3\theta_2+\theta_4-\varphi(x))x\dx=0.\end{equation}
A column-reduction, which relies on the fact that for every $\theta\in(\mathcal{M}\cap U)$ we have $\theta_3\neq 0$, proves for every $\theta\in(\mathcal{M}\cap U)$ that
\begin{equation}\textrm{rank}((\Hess f)(\theta))=2=\codim(\mathcal{M}\cap U).\end{equation}
This completes the proof of Proposition~\ref{pair_assumption}. \end{proof}

\vspace{.1cm}

\subsection{A two parameter network with the ReLU activation function}~\label{two_parameter}

In this section, we show that the conditions of Theorem~\ref{intro_ng_one_path} are satisfied by a two-parameter affine-linear network with the ReLU activation function.

\begin{prop}\label{prop_ReLU}  Let $(\Omega,\mathcal{F},\mathbb{P})$ be a probability space, let $X_{n,m}\colon\Omega\rightarrow[0,1]$, $n,m\in\mathbb{N}$, be i.i.d. random variables that are continuous uniformly distributed on $[0,1]$, let $f\colon\mathbb{R}^2\rightarrow\mathbb{R}$ be the function which satisfies for every $\theta=(\theta_1,\theta_2)\in\mathbb{R}^2$ that 
\begin{equation}f(\theta)=\int_0^1\abs{\theta_2 \max(\theta_1x,0)-\sin(x)}^2\dx,\end{equation}
and let $F\colon\mathbb{R}^2\times[0,1]\rightarrow\mathbb{R}$ be the function which satisfies for every $\theta\in\mathbb{R}^2$, $x\in[0,1]$ that
\begin{equation} F(\theta,x)=\abs{\theta_2 \max(\theta_1x,0)-\sin(x)}^2.\end{equation}
Then the functions $f$, $F$ and the random variables $X_{n,m}$, $n,m\in\mathbb{N}$, satisfy the conditions of Theorem~\ref{intro_ng_one_path}. \end{prop}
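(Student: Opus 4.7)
The plan is to follow the same template as the proof of Proposition~\ref{pair_assumption}: first verify the measurability/integrability/regularity hypotheses, then identify an open set $U$ on which $f$ is smooth, characterize $\mathcal{M}\cap U$ explicitly as a one-dimensional submanifold, check it consists of global minima, and finally verify the Hessian rank condition.

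First I would record the basic hypotheses.  For each fixed $x\in[0,1]$, the map $\theta\mapsto F(\theta,x)$ is a composition of smooth functions with the ReLU, so it is locally Lipschitz (invoking the relaxation mentioned in the remark after Theorem~\ref{intro_ng_one_path}).  Joint measurability and the bound $|F(\theta,x)|^2+|\nabla_\theta F(\theta,x)|^2\leq C(\theta)$ uniformly on compact sets of $\mathbb{R}^2$ are immediate since $x\in[0,1]$ is bounded and $\sin$ is bounded.  The identity $f(\theta)=\mathbb{E}[F(\theta,X_{1,1})]$ follows from the uniform distribution of $X_{1,1}$ on $[0,1]$.

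The key structural step is to take $U=\{\theta\in\mathbb{R}^2\colon\theta_1>0\}$, on which for every $x\in[0,1]$ we have $\theta_1x\geq 0$, so $\max(\theta_1x,0)=\theta_1x$ and
\begin{equation}f(\theta)=\int_0^1\left(\theta_1\theta_2\,x-\sin(x)\right)^2\!\dx=:g(\theta_1\theta_2),\end{equation}
with $g(c)=\tfrac{c^2}{3}-2c(\sin(1)-\cos(1))+\int_0^1\sin^2(x)\dx$ after using $\int_0^1 x\sin(x)\dx=\sin(1)-\cos(1)$.  Thus $f|_U\in\C^\infty(U)$.  The unique critical point of $g$ is $c^*=3(\sin(1)-\cos(1))>0$, and $g''\equiv\tfrac{2}{3}>0$, so on $U$ the critical set of $f$ is exactly the smooth hyperbola branch
\begin{equation}\mathcal{M}\cap U=\{(\theta_1,\theta_2)\in U\colon\theta_1\theta_2=c^*\},\end{equation}
which is a one-dimensional $\C^\infty$ submanifold of $\mathbb{R}^2$, so $\mathfrak{d}=1$.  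To see these are global minima, I would compare the minimal value $g(c^*)=\int_0^1\sin^2(x)\dx-3(\sin(1)-\cos(1))^2$ with the value of $f$ on $\{\theta_1\leq 0\}$, where $\max(\theta_1x,0)=0$ for a.e.\ $x\in[0,1]$ and hence $f\equiv\int_0^1\sin^2(x)\dx$; the strict inequality $g(c^*)<\int_0^1\sin^2(x)\dx$ shows $\mathcal{M}=\mathcal{M}\cap U$ as sets of global minima.

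Finally I would verify the nondegeneracy of the Hessian.  Writing $h(\theta)=g(\theta_1\theta_2)$, a direct computation gives
\begin{equation}(\Hess f)(\theta)=g''(\theta_1\theta_2)\begin{pmatrix}\theta_2^2 & \theta_1\theta_2\\ \theta_1\theta_2 & \theta_1^2\end{pmatrix}+g'(\theta_1\theta_2)\begin{pmatrix}0 & 1\\ 1 & 0\end{pmatrix},\end{equation}
and on $\mathcal{M}\cap U$ the second term vanishes since $g'(c^*)=0$, leaving the rank-one outer product $\tfrac{2}{3}(\theta_2,\theta_1)^\top(\theta_2,\theta_1)$, which is nonzero since $\theta_1>0$ forces $(\theta_1,\theta_2)\neq 0$.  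Hence $\rank((\Hess f)(\theta))=1=d-\mathfrak{d}$ at every $\theta\in\mathcal{M}\cap U$, completing the verification.  The only nontrivial step I anticipate is the clean global-minimum comparison between $U$ and $\{\theta_1\leq 0\}$; everything else is a direct calculation on the smooth region where the ReLU simplifies to a linear map.
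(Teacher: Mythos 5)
Your proposal is correct and follows essentially the same route as the paper's proof: restrict to an open half-plane on which the ReLU is linear, reduce $f$ to a strictly convex function of the product $\theta_1\theta_2$, identify $\mathcal{M}\cap U$ as the hyperbola branch $\theta_1\theta_2=3(\sin(1)-\cos(1))$, and verify that the Hessian there is a rank-one outer product, matching $\codim(\mathcal{M}\cap U)=1$. Your use of the strict convexity of $g$ plus the explicit comparison with the region $\{\theta_1\leq 0\}$ is a slightly cleaner way to establish global minimality than the paper's contradiction argument (and it makes explicit a step the paper dismisses as ``an explicit computation''), but the substance is the same.
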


\begin{proof}[Proof of Proposition~\ref{prop_ReLU}] It is immediate that $F(\cdot,x)\in\C^{0,1}_{\textrm{loc}}(\mathbb{R}^2)$.  Since the $X_{n,m}$, $n,m\in\mathbb{N}$ are uniformly distributed on $[0,1]$, for every $\theta\in\mathbb{R}^2$ it holds that
\begin{equation}f(\theta)=\mathbb{E}[F(\theta,X_{1,1})],\end{equation}
and, furthermore, a straightforward computation proves for every compact set $\mathfrak{C}\subseteq\mathbb{R}^2$ that
\begin{equation}\sup_{\theta\in \mathfrak{C}}\mathbb{E}\left[\abs{F(\theta,X_{1,1})}^2+\abs{\nabla_\theta F(\theta,X_{1,1})}^2\right]<\infty.\end{equation}
It remains only to characterize the minima of the objective function, and to verify the nondegeneracy condition.  An explicit computation proves that, when minimizing $f$, it is sufficient to restrict to the set $\{\theta_1>0, \theta_2>0\}$.  Let $U\subseteq\mathbb{R}^2$ satisfy that
\begin{equation} U=\{\theta\in\mathbb{R}^2\colon \theta_1>0, \theta_2>0\}.\end{equation}
We observe for every $\theta\in U$ that
\begin{equation}\label{two_pot_2}f(\theta)=\int_0^1\abs{\theta_1\theta_2 x-\sin(x)}^2\dx,\end{equation}
and for every $\theta\in U$ that
\begin{equation}\nabla f(\theta)=2\int_0^1(\theta_1\theta_2x-\sin(x))\left(\begin{aligned} & \theta_2x \\ &\theta_1 x\end{aligned}\right)\dx.\end{equation}
Therefore, for $\theta\in U$ it holds that $\nabla f(\theta)=0$ if and only if it holds that
\begin{equation}\label{two_pot_20}\theta_1\theta_2=3\int_0^1x\sin(x)\dx=3(\sin(1)-\cos(1)).\end{equation}
Let $\mathcal{M}\subseteq\mathbb{R}^2$ satisfy that
\begin{equation}\mathcal{M}=\{\theta\in\mathbb{R}^2\colon f(\theta)=\inf\nolimits_{\vartheta\in\mathbb{R}^4}f(\vartheta)\}.\end{equation}
We claim that
\begin{equation}\label{lin_nonzero_200}\mathcal{M}\cap U=\{\;\theta\in\mathbb{R}^2\colon \theta\;\textrm{satisfies \eqref{two_pot_20}},\; \theta_1>0,\; \textrm{and}\; \theta_2>0\}.\end{equation}
Suppose that $\theta\in U$ satisfies \eqref{two_pot_20}.  By contradiction suppose that there exists $\theta_0=(\theta_{1,0},\theta_{2,0})\in\{\theta_1>0,\theta_2>0\}$ such that
\begin{equation}f(\theta_0)<f(\theta).\end{equation}
Since $\theta_{1,0}>0$ an explicit computation proves that
\begin{equation}\label{two_pot_21}\lim_{\theta_2\rightarrow\infty}f(\theta_{1,0},\theta_2)=+\infty\;\;\textrm{and}\;\;f(\theta_{1,0},0)>f(\theta_0).\end{equation}
The arguments leading from \eqref{two_pot_2} to \eqref{two_pot_20} prove that \eqref{two_pot_21} is uniquely minimized when
\begin{equation}\theta_2=\frac{3}{\theta_{1,0}}(\sin(1)-\cos(1)).\end{equation}
Therefore, for $\tilde{\theta}_0\in\mathbb{R}^2$ which satisfies that
\begin{equation}\tilde{\theta}_0=(\theta_{1,0}, \frac{3}{\theta_{1,0}}(\sin(1)-\cos(1))),\end{equation}
we have that $\tilde{\theta}_0\in U$, that $\tilde{\theta}_0$ satisfies \eqref{two_pot_20}, and that
\begin{equation}f(\tilde{\theta}_0)\leq f(\theta_0)<f(\theta).\end{equation}
This contradicts the fact that $\nabla f=0$ on the connected set of $\theta\in U$ that satisfy \eqref{two_pot_20}.  This proves \eqref{lin_nonzero_200}.  Since it is clear that $\mathcal{M}\cap U$ is a non-empty, $1$-dimensional, $\C^1$-submanifold of $\mathbb{R}^2$, it remains only to establish the nondegeneracy assumption.
For every $\theta\in(\mathcal{M}\cap U)$ it holds that
\begin{equation}\begin{aligned} \big(\Hess f\big)(\theta) & =2\left(\begin{aligned} & \frac{1}{3}\theta_2^2 & \frac{2}{3}\theta_1\theta_2-(\sin(1)-\cos(1)) \\ &  & \frac{1}{3}\theta_1^2 \end{aligned}\right) \\ & =2\left(\begin{aligned} & \frac{1}{3}\theta_2^2 & \sin(1)-\cos(1) \\ & & \frac{3(\sin(1)-\cos(1))^2}{\theta_2^2} \end{aligned}\right).\end{aligned}\end{equation}
A column reduction and $\theta_2\neq 0$ prove for every $\theta\in(\mathcal{M}\cap U)$ that
\begin{equation}\textrm{rank}((\Hess f)(\theta))=1=\codim(\mathcal{M}\cap U).\end{equation}
This completes the proof of Proposition~\ref{prop_ReLU}. \end{proof}

\section*{Acknowledgements}

The first author acknowledges financial support from the National Science Foundation Mathematical Sciences Postdoctoral Research Fellowship under Grant Number 1502731.

The second author acknowledges financial support by the DFG through the CRC 1283 ``Taming uncertainty and profiting from randomness and low regularity in analysis, stochastics and their applications.''

\bibliography{Exit}
\bibliographystyle{plain}

\end{document}